\newtheorem{definition}{Definition}[section]
\newtheorem{lemma}{Lemma}[section]
\newtheorem{theorem}{Theorem}[section]
\newtheorem{remark}{Remark}
\def\rr{\mathbb{R}}
\def\eps{\varepsilon}
\def\ml{\mathcal{L}}
\def\mE{\mathcal{E}}
\def\mL{\mathcal{L}}
\def\mH{\mathcal{H}}
\def\ll{\mathcal{L}_\lambda}
\def\el{\mathcal{E}_\lambda}
\title[]{A non-local coupling model involving three fractional Laplacians}
\author[A. Garriz  \and L. I. Ignat ]{ Alejandro Garriz \and Liviu I. Ignat }
\address{ A. G\'arriz
\hfill\break\indent Instituo de Matem\'aticas de la Universidad de Granada\\
Calle Ventanilla, 11, 18001 Granada SPAIN\\
28049 Madrid SPAIN
}
\email{{\tt
alejandro.garrizmolina@gmail.com}}
\address{L. I. Ignat
	\hfill\break\indent Institute of Mathematics ``Simion Stoilow'' of the Romanian Academy,
	Centre Francophone en Math\'{e}matique
	\\21 Calea Grivitei Street \\010702 Bucharest, ROMANIA.
}
\email{
	{\tt liviu.ignat@gmail.com}  
	\hfill\break\indent {\it Web page: }
	{\tt http://www.imar.ro/\~\,lignat}}
\thanks{This article was developed during a stay of  A.~G\'arriz at the "Simion Stoilow" Institute of Mathematics of the Romanian Academy with Liviu Ignat that was supported by the Agence Universitaire de la Francophonie (CA).
A. G\'arriz was also supported by the Spanish Ministry of Science and Innovation,  through projects  MTM2017-87596-P and SEV-2015-0554, by the Spanish National Research Council, through project 20205CEX001 and by the Institute of Mathematical Sciences ICMAT.}
\begin{document}

\keywords{Nonlocal diffusion, compactness arguments, gradient flow, asymptotic behavior, fractional Laplacian\\
\indent 2000 {\it Mathematics Subject Classification.} 35B40,
 45G10, 46B50.}

\begin{abstract}

In this article we study a non-local diffusion problem that involves three different fractional Laplacian operators acting on two domains. Each domain has an associated operator that governs the diffusion on it, and the third operator serves as a coupling mechanism between the two of them. The model proposed is the gradient flow of a non-local energy functional. In the first part of the article we provide results about existence of solutions and the conservation of mass. The second part encompasses results about the $L^p$ decay of the solutions. The third part is devoted to study the asymptotic behavior of the solutions of the problem when the two domains are a ball and its complementary.
Exterior fractional Sobolev and Nash inequalities of independent interest are also provided in an appendix.
\end{abstract}

\maketitle

\section{Introduction}

If one considers a non-local diffusion equation, probably one of the most famous and more deeply studied is the fractional heat equation, which can be written, formally, as
\begin{equation} \label{eq:fractional-heat}
u_t(x,t) + (-\Delta)^r u(x,t) = u_t + C_{N,r}P.V.  \int_{\rr^N} \frac{u(x)-u(y)}{|x-y|^{N+2r}}dy =0,
\end{equation}
for an $r\in (0,1)$, where $N$ is the dimension of the space, $C_{N,r}$ is a constant depending on $N$ and $r$ and $P.V.$ means that the integral must be understood in the Cauchy's Principal Value sense. Throughout the article we will omit these two letters refering the principal value for the sake of simplicity but the reader must recall that every single integral is understood in this sense when necessary. This equation is naturally associated with the energy
$$
\label{eq:energy.heat}
E(u) =  \frac{1}{4}\int _{\rr^N}\int_{\rr^N}  \frac{(u(x)-u(y))^2}{|x-y|^{N+2r}}\ d yd x
$$
in the sense that \eqref{eq:fractional-heat} is the $L^2$ gradient flow associated to $E(u)$. This equation models non-local diffusion derived from Lévy processes when the probability of particles jumping from point $x$ to point $y$ is given by the kernel $|x-y|^{-N-2r}$, which is a symmetric but singular function.

One limitation of this model is that it considers the ambient space as uniform, so it is natural to think about a model where the ambient space produces a different diffusion depending on which part of it the particle is in. To this end, an obvious possibility is to study the equation
\begin{equation}\notag
\begin{cases}
\displaystyle u_t(x,t)=\alpha_s\int _{\Omega_s} \frac{u(y,t)-u(x,t)}{|x-y|^{N+2s}}dy ,\qquad&x\in \Omega_s,\ t>0,\\

\displaystyle u_t(x,t)= \alpha_r\int _{\Omega_{r}} \frac{u(y,t)-u(x,t)}{|x-y|^{N+2r}}dy,\qquad&x\in \Omega_{r},\ t>0,\\

u(x,0)=u_0(x)\qquad&x\in \Omega,
\end{cases}
\end{equation}
for a couple of values $r,s\in (0,1)$ (the values $\alpha_s$ and  $\alpha_r$ are just, for now, normalization constants),  with $\Omega_s$ and $\Omega_{r}$ two disjoint open domains and $\Omega=\Omega_r\cup\Omega_s$
but this is a naive aproximation to the problem, since under this definition both domains (which we do not assume to be close to each other) are independent and thus the solutions of the equation must be studied separatedly by splitting the domain in two parts. No particle is allowed to have any information of what is happening in the domain it is not in. One possible way to solve this lack of intertwining is to ``couple'' the domains by considering ``jumps'' of the particles from $\Omega_s$ to $\Omega_r$ governed by a third non-local operator, another fractional Laplacian. In other words,
\begin{equation}\label{eq:main}
\begin{cases}
\displaystyle u_t(x,t)=\alpha_s\int _{\Omega_s} \frac{u(y,t)-u(x,t)}{|x-y|^{N+2s}}dy + \alpha_c\int _{\Omega_{r}} \frac{u(y,t)-u(x,t)}{|x-y|^{N+2c}}dy,&x\in \Omega_s,\ t>0,\\

\displaystyle u_t(x,t)= \alpha_r\int _{\Omega_{r}} \frac{u(y,t)-u(x,t)}{|x-y|^{N+2r}}dy + \alpha_c\int _{\Omega_s} \frac{u(y,t)-u(x,t)}{|x-y|^{N+2c}}dy,&x\in \Omega_{r},\ t>0,\\
u(x,0)=u_0(x),&x\in \Omega,
\end{cases}
\end{equation}
for $r,s, c\in (0,1)$. The reader must note that there are other possible options to couple this problem, see for example~\cite{MR3642095}, but the reason why we chose this one is mainly because this equation is the gradient flow of the energy functional
\begin{align}\notag
\label{energy.1}
E(u)&=\frac{\alpha_s}{4} \int _{\Omega_{s}}\int _{\Omega_{s}} \frac{(u(x)-u(y))^2}{|x-y|^{N+2s}}\ d yd x+\frac{\alpha_r}{4} \int _{\Omega_{r}}\int _{\Omega_{r}} \frac{(u(x)-u(y))^2}{|x-y|^{N+2r}}\ d yd x\\
&\quad+\frac{\alpha_c}{2} \int _{\Omega_{s}}\int _{\Omega_{r}} \frac{(u(x)-u(y))^2}{|x-y|^{N+2c}}\ d yd x,
\end{align}
as we shall see in the next section. Other possible reason is that it is interesting to consider the case where the probability of jumping to your own domain is not the same as the probability of jumping to the other domain, hence the difference in the exponents of the integration kernels. This kind of coupling problems have already been studied in~\cite{MR3642095, Kri} in the case of a different coupling method (in the so-called \textit{transmision problems}) and in~\cite{MR4114263} for different operators, the usual heat operator and another one given by a convolution with a probability kernel.

It is also important to note something about the constants $\alpha_s, \alpha_r, \alpha_c$. In our equations they already encompass, for simplicity, the constant originating from the fractional Laplacian operator, the one we called $C_{N,r}$ in~\eqref{eq:fractional-heat}. In other words,
\begin{equation}\label{eq:constants}
	\alpha_r = C_{N,r} C_{\Omega_{r}},
\end{equation}
where $ C_{\Omega_{r}}$ depends only on the domain $\Omega_{r}$ and on the value of $r$, and respectively for $c,s$. We use this notation because in the future we would like to study this problem when some of the values $r,s,c$ go to one or to zero, see~\cite{MR4114263}. In both questions these constants are expected to play an important role when determining the equation obtained in the limit, but in the present one not so much in the proofs, so we will omit these three constants most of the times for the sake of simplicity. If the reader desires so he can consider $C_{\Omega_{r}}=1$  for a tidier presentation of the article.

The gradient flow structure of this problem already provides a certain $L^2$ existence theory, but in order to study the problem in $L^1$ we make use of semigroups.
\begin{theorem}\label{thm:existence_banach}
For any $u_0\in L^1(\Omega)$ there exists a unique solution  $u\in C([0,\infty),L^1(\Omega))$ to the problem~\eqref{eq:main}. It conserves the mass and its $L^1(\Omega)$ norm does not increase in time  
$$
\|u(t_2)\|_{L^1(\Omega)} \leq  \|u(t_1)\|_{L^1(\Omega)}\quad \text{for any }0\leq t_1\leq t_2 < \infty.
$$
\end{theorem}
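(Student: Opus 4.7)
The plan is to view \eqref{eq:main} as an abstract Cauchy problem $u'(t)+\mathcal{A}u(t)=0$ in $L^1(\Omega)$, where $\mathcal{A}$ is the linear operator given by the (sign-reversed) right-hand sides of \eqref{eq:main}, defined on a dense subset $D(\mathcal{A})\subset L^1(\Omega)$ consisting of bounded functions for which the three principal-value integrals belong to $L^1(\Omega)$. The objective is to verify that $\mathcal{A}$ is $m$-accretive in $L^1(\Omega)$ and to invoke the Crandall--Liggett generation theorem to produce a strongly continuous contraction semigroup $(S(t))_{t\geq 0}$; setting $u(t):=S(t)u_0$ then yields the required mild solution.

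For accretivity I would take $u,v\in D(\mathcal{A})$, write $w=u-v$, and test $w+\lambda(\mathcal{A}u-\mathcal{A}v)=w+\lambda\mathcal{A}w$ against $\sgn(w)$. Exploiting the symmetry of each of the three kernels in $(x,y)$, every nonlocal term can be rewritten, after symmetrisation, as
\[
\frac{1}{2}\iint \bigl(w(x)-w(y)\bigr)\bigl(\sgn w(x)-\sgn w(y)\bigr)\,\frac{dy\,dx}{|x-y|^{N+2\bullet}},
\]
which is non-negative because $\tau\mapsto\sgn\tau$ is non-decreasing (the nonlocal Kato inequality). Consequently
\[
\|w\|_{L^1(\Omega)}=\int_\Omega w\,\sgn(w)\,dx\leq \int_\Omega (w+\lambda\mathcal{A}w)\,\sgn(w)\,dx\leq \|w+\lambda\mathcal{A}w\|_{L^1(\Omega)},
\]
which is the required $L^1$ accretivity.

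The heart of the matter, and the step I expect to be the main obstacle, is the range condition $R(I+\lambda\mathcal{A})=L^1(\Omega)$ for every $\lambda>0$. For $f\in L^2(\Omega)$ I would solve $u+\lambda\mathcal{A}u=f$ variationally, by minimising the strictly convex, coercive functional $\tfrac12\|u\|_{L^2(\Omega)}^2+\lambda E(u)-\int_\Omega fu\,dx$ (with $E$ the energy \eqref{energy.1}) on the Hilbert space obtained as the completion of $L^2(\Omega)$ under the norm $(\|u\|_{L^2}^2+E(u))^{1/2}$; the three non-negative quadratic forms composing $E$ assemble into a bounded coercive bilinear form, so Lax--Milgram delivers a unique minimiser $u$ whose Euler--Lagrange equation is precisely $u+\lambda\mathcal{A}u=f$. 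To pass to $f\in L^1(\Omega)$ I would approximate by truncations $f_n\in L^\infty(\Omega)\cap L^2(\Omega)$ and use the $L^1$ contraction of the resolvent, already a consequence of accretivity, to conclude that the sequence $u_n=(I+\lambda\mathcal{A})^{-1}f_n$ is Cauchy in $L^1(\Omega)$, its limit being the sought $u$. The delicate point is to check that the three different singularities $|x-y|^{-N-2s}$, $|x-y|^{-N-2r}$, $|x-y|^{-N-2c}$ are handled uniformly despite being integrated only on their respective sub-domains $\Omega_s\times\Omega_s$, $\Omega_r\times\Omega_r$ and $\Omega_s\times\Omega_r$.

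Once the semigroup is in hand, conservation of mass is immediate: each kernel is symmetric in $(x,y)$, hence $\int_\Omega\mathcal{A}u\,dx=0$ for every $u\in D(\mathcal{A})$, so $t\mapsto\int_\Omega u(x,t)\,dx$ is constant for strong solutions and, by density, for mild solutions as well. Finally, $0\in D(\mathcal{A})$ with $\mathcal{A}0=0$, so $S(t)0=0$, and the $L^1$ contraction of $S(t)$ combined with the semigroup identity $u(t_2)=S(t_2-t_1)u(t_1)$ yields $\|u(t_2)\|_{L^1(\Omega)}\leq\|u(t_1)\|_{L^1(\Omega)}$ for all $0\leq t_1\leq t_2$.
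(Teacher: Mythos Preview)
Your argument is correct and complete in outline, but it takes a different route from the paper. The paper builds the $L^2$ theory first, via the symmetric closed form $\mathcal{E}$ and the associated self-adjoint generator $\mathcal{L}$, and then extends the semigroup to all $L^p(\Omega)$, $1\leq p\leq\infty$, by the Beurling--Deny criteria: from $\mathcal{E}(|u|,|u|)\leq\mathcal{E}(u,u)$ it obtains positivity, from $\mathcal{E}(1\wedge u,1\wedge u)\leq\mathcal{E}(u,u)$ it obtains $L^\infty$-contractivity, and then uses the standard interpolation/duality machinery (as in Ouhabaz, \emph{Analysis of Heat Equations on Domains}) to get strongly continuous contractions on every $L^p$, with the $L^1$ case by density. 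Mass conservation is obtained by pairing with the constant $1_\Omega$ when $\Omega$ is bounded, and with smooth cut-offs $\chi_R$ satisfying $\mathcal{E}(\chi_R,\chi_R)\to 0$ when $\Omega$ is unbounded.

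Your approach instead works directly in $L^1$ via Crandall--Liggett: you check $m$-accretivity of $\mathcal{A}$ in $L^1$ by the nonlocal Kato inequality (symmetrising against $\sgn w$), establish the range condition by first solving the resolvent equation variationally in $L^2$ and then passing to $L^1$ data by the $L^1$-contraction of the resolvent. This is equally valid and arguably more direct for the $L^1$ statement alone; it also has the advantage of being robust under nonlinear perturbations. What the paper's route buys in exchange is the full $L^p$ scale and the comparison/positivity principle in one stroke, both of which are used repeatedly later (Sections~3 and~4). One small point: your mass-conservation step ``$\int_\Omega\mathcal{A}u\,dx=0$ by symmetry'' implicitly uses Fubini on a singular kernel over a possibly unbounded domain; the paper's cut-off argument $\chi_R\uparrow 1$ with $\mathcal{E}(\chi_R,\chi_R)\to 0$ is the clean way to justify this when $\Omega$ is unbounded.
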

Keep in mind that depending on the initial datum the existing theory allows us to improve on the properties of the solutions, as we shall see.

%
%

\medskip 

Now once the existence of solutions of this problem is stablished properly, we would like to discuss the case when $\Omega_s$ is a Lipschitz bounded domain and $\Omega_r=\rr^N\setminus \overline{\Omega_s}$ satisfies the following measure density condition:
there exists a positive constant $C_{\Omega_r}$ such that
 \begin{equation}
\label{measure.condition}
  |\Omega_r\cap B_\rho(x)|\geq C_{\Omega_r} \rho^N, \quad\forall x\in \Omega_r,  \forall \rho>0.
\end{equation}
When this condition is satisfied for all $\rho\in (0,1]$ the domain $\Omega_r$ is called \textit{regular} or \textit{plump}. Under these assumptions we want to study the competition between diffusions when one domain is bounded and the other one, its complementary, is an exterior domain, and how this competition determines the shape of the solution for big times.

%
%

\begin{theorem}
	\label{decay}For any $u_0\in L^1(\rr^N) $ and $1\leq p\leq \infty$ there exists a positive constant $C=C(p,N,\Omega_s,\Omega_r)$ such that the 
	 solution of system \eqref{eq:main} satisfies
	 $$
		\label{decay-rate}
		 \|u(t)\|_{L^p(\rr^N)}\leq C( t^{-\frac N{2r}(1-\frac 1p)}+ t^{-\frac N{2\min\{r,s\}}(1-\frac 1p)})\|u_0\|_{L^1(\rr^N)}, \quad \forall t>0.
	$$
The above behavior  is optimal in the class of the polynomial grow/decay.
\end{theorem}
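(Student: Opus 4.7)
My plan is a Nash--Moser-style argument building on the gradient-flow structure of $E$ and the semigroup given by Theorem~\ref{thm:existence_banach}. Setting $\sigma:=\min\{r,s\}$, the starting point is the $L^2$ energy identity
$$
\frac{d}{dt}\|u(t)\|_{L^2(\rr^N)}^2 = -4\,E(u(t)),
$$
which holds for $u_0\in L^1\cap L^2$ by the gradient-flow structure of~\eqref{eq:main} and extends to $L^1$ data by density and the $L^p$-contractivity of Theorem~\ref{thm:existence_banach}. The crux of the proof is a Nash-type inequality for $E$, which I aim to derive in the form
$$
\|v\|_{L^2(\rr^N)}^2\le C\Bigl[E(v)^{\frac{N}{N+2r}}\|v\|_{L^1(\rr^N)}^{\frac{4r}{N+2r}}+E(v)^{\frac{N}{N+2\sigma}}\|v\|_{L^1(\rr^N)}^{\frac{4\sigma}{N+2\sigma}}\Bigr].
$$
Inserted into the identity, this yields an ODE for $\rho(t):=\|u(t)\|_{L^2}^2$ in which the two summands dominate in complementary regimes: while $\rho\gtrsim\|u_0\|_{L^1}^2$ the $\sigma$-term drives $\rho'=-4E$ and produces $\rho(t)\lesssim\|u_0\|_{L^1}^2\,t^{-N/(2\sigma)}$; once $\rho\lesssim\|u_0\|_{L^1}^2$ the $r$-term takes over and produces $\rho(t)\lesssim\|u_0\|_{L^1}^2\,t^{-N/(2r)}$. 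Adding these gives the $p=2$ case of the claim.

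To prove the Nash inequality I decompose $v=v_s+v_r$ with $v_s=v\,\mathbf{1}_{\Omega_s}$ and $v_r=v\,\mathbf{1}_{\Omega_r}$. The exterior fractional Nash inequality from the appendix, available because $\Omega_r$ is plump in the sense of~\eqref{measure.condition}, bounds $\|v_r\|_{L^2(\Omega_r)}^{2+4r/N}$ by $C[v_r]_{H^r(\Omega_r)}^2\|v_r\|_{L^1}^{4r/N}\le C'E(v)\|v\|_{L^1}^{4r/N}$ and delivers the first summand. For the $\Omega_s$ piece, a fractional Nash inequality on the bounded Lipschitz domain $\Omega_s$ controls $\|v_s-\bar v_s\|_{L^2(\Omega_s)}$ by the $\Omega_s$-part of $E(v)$ at order $s$, while the mean $\bar v_s$ is estimated via the cross-term: on a large annulus $A_R:=\Omega_r\cap(B_{2R}\setminus B_R)$ disjoint from $\Omega_s$ one has
$$
E(v)\ge\frac{\alpha_c}{2}\int_{\Omega_s}\int_{A_R}\frac{(v(x)-v(y))^2}{|x-y|^{N+2c}}\,dy\,dx \gtrsim R^{-2c}\bigl(\bar v_s-\overline{v}_{A_R}\bigr)^2,
$$
and combining with $|\overline{v}_{A_R}|\lesssim \|v\|_{L^1}/R^N$ and optimizing $R$ yields the second summand with exponent $\sigma$ independent of $c$.

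Once the $L^2$ decay is established, the general $L^p$ decay follows by standard semigroup arguments: self-adjointness of the generator in $L^2$ gives the dual $L^2\to L^\infty$ estimate at the same rate, composition over $[t/2,t]$ yields $\|u(t)\|_{L^\infty}\le C(t^{-N/(2r)}+t^{-N/(2\sigma)})\|u_0\|_{L^1}$, and Riesz--Thorin interpolation with the $L^1$-contractivity of Theorem~\ref{thm:existence_banach} yields the full $L^p$ estimate for $p\in[1,\infty]$. Optimality within polynomial rates is verified by exhibiting explicit initial data: a smooth bump supported well inside $\Omega_r$ and far from $\Omega_s$ evolves essentially as the pure $r$-fractional heat on $\rr^N$, forcing the $t^{-N/(2r)(1-1/p)}$ rate at large $t$; and when $s<r$, a bump concentrated in $\Omega_s$ has short-time dynamics governed by $(-\Delta)^s$, forcing the $t^{-N/(2\sigma)(1-1/p)}$ rate as $t\to 0^+$. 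I expect the main hurdle to be the second summand of the Nash inequality: the $H^s$-seminorm on $\Omega_s$ vanishes on constants, so the coupling energy is the only tool for controlling $\bar v_s$, and extracting from it the exponent $\sigma$ (rather than $c$) requires the careful radius optimization sketched above.
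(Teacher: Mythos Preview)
Your overall architecture (Nash inequality $\Rightarrow$ $L^1\to L^2$ decay $\Rightarrow$ duality and interpolation) matches the paper's, but the step you yourself flag as the main hurdle has a genuine gap. From $|\bar v_s|\lesssim R^{c}E(v)^{1/2}+\|v\|_{L^1}R^{-N}$, optimizing in $R$ gives $R^{2N+2c}\sim\|v\|_{L^1}^2/E(v)$ and hence
\[
|\bar v_s|^2\;\lesssim\;E(v)^{\frac{N}{N+c}}\,\|v\|_{L^1}^{\frac{2c}{N+c}},
\]
which in Nash form corresponds to exponent $c/2$, not $\sigma=\min\{r,s\}$; neither $r$ nor $s$ enters that optimization at all. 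Combined with your other two pieces you would obtain
\[
\|v\|_{L^2}^2\;\lesssim\;E^{\frac{N}{N+2r}}\|v\|_{L^1}^{\frac{4r}{N+2r}}+E^{\frac{N}{N+2s}}\|v\|_{L^1}^{\frac{4s}{N+2s}}+E^{\frac{N}{N+c}}\|v\|_{L^1}^{\frac{2c}{N+c}},
\]
and the corresponding small-time blow-up would be $t^{-\frac{N}{2\min\{r,s,c/2\}}}$, strictly worse than the asserted $t^{-\frac{N}{2\sigma}}$ whenever $c<2\min\{r,s\}$. Since the theorem claims the $\sigma$-rate is optimal and independent of $c$, your route as written does not reach it.

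The paper avoids this obstacle by using \emph{two different} Nash-type inequalities for the two time regimes. For $t\downarrow 0$ it proves
\[
\|v\|_{L^2}^{2+\frac{4\sigma}{N}}\le C\|v\|_{L^1}^{\frac{4\sigma}{N}}\bigl(\|v\|_{L^2}^2+\mE(v,v)\bigr),
\]
keeping the full $H^s(\Omega_s)$-norm (not just the seminorm) on the right; the extra $\|v\|_{L^2}^2$ absorbs the constant $\bar v_s$ without any appeal to the coupling, so $c$ never enters. For $t\to\infty$ it uses the cross energy differently: integrating the kernel over the whole exterior (no annulus, no optimization) yields $E_1(v)\gtrsim\|v\|_{L^2(\Omega_s)}^2-C\|v\|_{L^2(\Omega_r)}^2$, hence $\|v\|_{L^2}^2\lesssim\mE(v,v)+\|v\|_{L^2(\Omega_r)}^2$, and the exterior Nash inequality then gives the $r$-exponent. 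Both are converted to semigroup bounds via Coulhon's abstract equivalence theorems rather than a direct ODE analysis. The same equivalence (in the converse direction) is what makes the optimality rigorous: a semigroup bound with exponent $\alpha$ forces a Nash inequality with that exponent, which the paper then violates with rescaled bumps placed in $\Omega_s$ (giving $\alpha\le s$) and in $\Omega_r$ (giving $\alpha\le r$ at $t=0$ and $\alpha\ge r$ at $t=\infty$). Your optimality sketch (``evolves essentially as the pure $r$-fractional heat'') is the conclusion, not an argument.
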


%
%
%
%
%

Let us say a few words about the optimality of the above powers of the time variable. In fact any $\alpha>0$ such that 
\begin{equation}\label{l1-linfty}
 \|u(t)\|_{L^\infty(\rr^N)}\leq C  t^{-\frac N{2\alpha}(1-\frac 1p)}\|u_0\|_{L^1(\rr^N)},\quad  \forall u_0\in L^1(\rr^N),
 \end{equation}
holds for all time $t\in (0,1)$ 
must satisfy
$
\alpha\leq \min\{r,s\}.
$
Also, if \eqref{l1-linfty} holds for all $t>t_0$ then $\alpha\geq r$. We emphasize that the above optimality is in the class of polynomial grow/decay and it does not exclude the possible logarithmic corrections at zero of infinity. However, when $r<N/2+c$ we will prove in the next theorem that the behaviour at infinity cannot be improved and the polynomial decay is the correct one.
In dimension $N\geq 2$, we always have $N/2+c>1>r$ thus $\alpha\geq r$. In dimension one it remains to analyze what is the optimal decay when $1/2+c<r<1$.

We can be more precise about the limit profile when the time goes to infinity. To avoid geometrical technicalities we consider the particular case when $\Omega_s$ is the unit ball $B_1(0)$ and $\Omega_r$ is its open complementary. 
 We  expect the mass to accumulate in the unbounded domain and thus the solution must look like the solution of problem~\eqref{eq:fractional-heat} with initial datum the Dirac's delta  times the mass of the initial datum and fractional exponent the one corresponding to the unbounded domain.  
This is  true in a certain range of the exponents $r,s$ and $c$.
\begin{theorem}\label{first.term}Let $\Omega_s=B_1(0)$,  $\Omega_r=\mathbb{R}^N\setminus \overline{B_1(0)}$ and 
$r,s$ and $c$ satisfying  
$$
\label{hip.exp.strict}
  r<\frac N2+c.
$$
For any $u_0\in L^1(\rr^N) $ the solution of equation~\eqref{eq:main}   satisfies
$$
\lim_{t\to \infty} t^{\frac{N}{2r}\left(  1-\frac{1}{p}  \right)}\|u(t) - U_M(t)\|_{L^p(\rr^N)}  = 0\quad\text{for all }1\leq p< \infty,
$$
where $U_M(x,t)$ is the solution of equation
$$
u_t + C_{\Omega_r}(-\Delta)^r u = 0\quad \text{for all }x\in\mathbb{R}^N,\qquad u(x,0)=M\delta_0,
$$
$\delta_0$ is Dirac's delta centered at $x=0$ and $M$ is the mass of the initial data.
\end{theorem}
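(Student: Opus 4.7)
The plan is the classical rescaling--compactness--uniqueness argument, adapted to the exterior exponent $r$ that governs the slow decay of Theorem~\ref{decay}. Introduce the rescaled family
\begin{equation*}
u_\lambda(x,t):=\lambda^N u(\lambda x,\lambda^{2r}t),\qquad \lambda\ge 1,
\end{equation*}
which preserves total mass and for which $U_M$ is invariant by self-similarity of the fractional heat kernel of order $r$. The statement is therefore equivalent to $\|u_\lambda(1)-U_M(1)\|_{L^p(\rr^N)}\to 0$ as $\lambda\to\infty$. A direct change of variables shows that $u_\lambda$ solves, on $B_{1/\lambda}$ and its complement, a coupled system of the same form as~\eqref{eq:main} in which the $r$-operator is invariant, the $s$-operator carries a prefactor $\lambda^{2(r-s)}$, and both coupling $c$-operators carry a prefactor $\lambda^{2(r-c)}$.

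The next step is uniform compactness of $\{u_\lambda\}$. Mass conservation from Theorem~\ref{thm:existence_banach} yields $\|u_\lambda(t)\|_{L^1}\le M$, while the decay bound of Theorem~\ref{decay} evaluated at time $\lambda^{2r}t$ gives $\|u_\lambda(t)\|_{L^\infty}\le C t^{-N/(2r)}$ uniformly in $\lambda\ge 1$ for $t\ge t_0>0$. I complement these with a uniform $L^1$ tail estimate, obtained by testing the equation against smooth truncations $\varphi(\cdot/R)$ and exploiting the integrability of the kernels at infinity, and with equicontinuity in time in a local negative Sobolev space, which comes directly from the equation. An Aubin--Lions type argument then extracts a subsequence $u_{\lambda_j}\to U$ in $C_{\mathrm{loc}}((0,\infty);L^p_{\mathrm{loc}}(\rr^N))$.

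The core of the proof is the identification of $U$ as a solution of the pure fractional heat equation of exponent $r$ on $\rr^N$. In the weak formulation against $\varphi\in C_c^\infty((0,\infty)\times\rr^N)$, the $s$-contribution vanishes because it is supported on $B_{1/\lambda}$, and each coupling $c$-contribution factorises through $\lambda^{2(r-c)}$ multiplied by an integral over $B_{1/\lambda}$. For $\varphi$ supported in $\{|x|\ge\delta\}$ we have
\begin{equation*}
\lambda^{2(r-c)}\int_{|z|<1/\lambda}|u_\lambda(z,t)|\,dz
=\lambda^{2(r-c)}\int_{\Omega_s}u(y,\lambda^{2r}t)\,dy
\le C\lambda^{2(r-c)-N}t^{-N/(2r)},
\end{equation*}
by the $L^\infty$ decay of $u$ on $\Omega_s$ coming from Theorem~\ref{decay}; the companion piece absorbs the factor $|B_{1/\lambda}|\sim\lambda^{-N}$ and scales identically. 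Both vanish as $\lambda\to\infty$ precisely when $r<N/2+c$, which is the hypothesis. Hence $U$ is a distributional solution of $U_t+C_{\Omega_r}(-\Delta)^r U=0$ on $(0,\infty)\times\rr^N$.

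Combining mass conservation with the uniform tail bound shows that $u_\lambda(\cdot,t)\rightharpoonup M\delta_0$ as $t\to 0^+$ uniformly in $\lambda$, so $U$ inherits the initial datum $M\delta_0$ and, by uniqueness for the fractional heat equation with Dirac data, $U=U_M$. Uniqueness of the limit upgrades subsequential convergence to convergence of the full family, and the $L^p$ statement follows by interpolating the uniform $L^1$ tail estimate with the $L^\infty$ bound, which controls the tails in $L^p$ uniformly in $\lambda$. The main obstacle I anticipate is securing the sharp uniform-in-$\lambda$ tail estimate: a naive weighted $L^1$ bound of the form $\int(1+|x|)^{\alpha}u\,dx$ for some suitable $\alpha$ must be checked to be compatible with the two-domain nonlocal structure and the measure density condition~\eqref{measure.condition}, since without uniform control at infinity the weak limit could lose mass and fail to be $U_M$.
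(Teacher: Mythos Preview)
Your overall strategy coincides with the paper's: rescale with the exterior exponent $r$, obtain compactness, identify the limit via uniqueness of very weak solutions with Dirac data, and interpolate. You also pinpoint correctly where the hypothesis $r<\tfrac{N}{2}+c$ enters, namely through the factor $\lambda^{2(r-c)-N}$ that kills the coupling contribution.

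There is, however, a genuine gap in your compactness step. The ingredients you list before invoking Aubin--Lions (mass conservation, the $L^\infty$ decay from Theorem~\ref{decay}, an $L^1$ tail bound, and equicontinuity of $t\mapsto u_\lambda(t)$ in a negative Sobolev space) contain no \emph{spatial} regularity, so there is no compact embedding to drive the argument: uniform $L^1\cap L^\infty$ bounds do not make $\{u_\lambda(t)\}$ relatively compact in $L^p_{\mathrm{loc}}(\rr^N)$. The paper supplies the missing piece through the energy. Restricting first to $u_0\in L^1\cap L^2$, the gradient-flow estimate $\mE(u(t),u(t))\le \tfrac{1}{2t}\|u_0\|_{L^2}^2$ (Theorem~\ref{thm:existence_hilbert_CH}) rescales to a bound $\mE_\lambda(u_\lambda(t),u_\lambda(t))\le C\,t^{-(1+N/2r)}$ uniform in $\lambda$, whose $r$-part gives $u_\lambda\in L^\infty((\tau,T);H^r(B_\rho^c))$ uniformly. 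The compact embedding $H^r\hookrightarrow L^2$ on bounded annuli then makes Aubin--Lions--Simon applicable on each $\{\rho<|x|<R\}$; the compactness is extended to $L^1_{\mathrm{loc}}(\rr^N)$ and to $L^1(\rr^N)$ using your tail control. The general initial datum $u_0\in L^1$ is recovered only at the very end, by density and the $L^1$--$L^p$ decay of Theorem~\ref{decay}. This reduction to $L^1\cap L^2$ data is essential and is absent from your outline.

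Two smaller points. First, you identify the limit equation only against test functions supported in $\{|x|\ge\delta\}$; the uniqueness result you need requires the weak formulation for all $\varphi\in C_c^\infty(\rr^N)$. The paper handles this uniformly in $\varphi$ by Cauchy--Schwarz on $\mE_\lambda(u_\lambda,\varphi)$, using again the energy bound on $u_\lambda$ together with an explicit estimate of $\mE_\lambda(\varphi,\varphi)$ for smooth $\varphi$ (Lemma~\ref{max.est}), which shows that the $s$- and $c$-contributions vanish without any support restriction. Second, the ``main obstacle'' you flag---the tail control---is real but is handled in the paper essentially as you suggest, by testing against $\psi(\cdot/R)$ and bounding $\ll\psi_R$ pointwise (Lemmas~\ref{est.Llamba}--\ref{lemma:bound-for-psi}); the only subtlety is that on $B_{1/\lambda}$ one must pair this with an $L^{1+\varepsilon}$ bound on $u_\lambda$ (rather than $L^1$) to absorb the factor $\lambda^{2(r-c)}$, which again uses $r<\tfrac{N}{2}+c$.
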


\begin{remark}
	It is not hard to show that $U_M(x,t)=MC_{\Omega_r}^{\frac 1{2r}} K_t^r(C_{\Omega_r}^{\frac 1{2r}}x)$, where $K_t^r(x)$ is the solution of the same equation when $C_{\Omega_r}=1$.
\end{remark}

The article is divided as follows. In Section 2 we address the issues of gradient flow structure, existence and conservation of mass. Section 3 focuses on the $L^p$ decay of the norms of the solution. Finally Section 4 deals with the first term in the asymptotic expansion for large time of the solution when the problem is posed in the unit ball and its complementary. There is an Appendix in Section 5 where we prove the Sobolev inequality for \textit{regular}~\cite{MR3280034} exterior domains (also called plump domains) and an exterior $L^p$ Nash inequality.

\section{Gradient flow structure and existence of solutions}

Let us consider two disjoint domains $\Omega_s$ and $\Omega_r$. We introduce the spaces
$$
X^c(\Omega_{s},\Omega_{r}):=\left\{u\in L^2(\Omega_s\cup \Omega_r) : \int_{\Omega_{s}}\int_{\Omega_{r}} \frac{(u(x)-u(y))^2}{|x-y|^{N+2c}} d yd x<\infty\right\}
$$
and
$$
\mathcal{H}(\Omega_s\cup \Omega_r):= H^s(\Omega_{s})\cap H^r(\Omega_{r})\cap X^c(\Omega_{s},\Omega_{r})
$$
where $H^s$ is the usual $s$-fractional Sobolev space. The exponents satisfy $s,r,c\in (0,1)$. Now we take three positive constants $\alpha_s,\alpha_r$ and $\alpha_c$ depending on their subindexes and on the dimension $N$ of the space and define  the energy by~\eqref{energy.1} if $u\in \mathcal{H}(\Omega_s\cup \Omega_r)$
and $E(u)=\infty$ if not, with its associated  form $\mE:\mathcal{H}(\Omega_s\cup \Omega_r)\times \mathcal{H}(\Omega_s\cup \Omega_r)\rightarrow \rr$, defining with it the associated inner product $<u,v>:=\mathcal{E}(u,v)$. This energy functional is proper, convex and lower semi-continuous, so following, for an instance, \cite[9.6.3, Thm 4]{MR2597943} we define the operator that will be in the end the subdifferential as
\begin{align}\notag
\label{subdifferential.operator}
\mathcal{A}[u](x):= - \int_\Omega\Big\{ & \alpha_s\frac{u(y)-u(x)}{|x-y|^{N+2s}} \chi_{\Omega_s}(y) \chi_{\Omega_s}(x)  + \alpha_r\frac{u(y)-u(x)}{|x-y|^{N+2r}} \chi_{\Omega_r}(y) \chi_{\Omega_r}(x) \\
\nonumber& \alpha_c\frac{u(y)-u(x)}{|x-y|^{N+2c}} \chi_{\Omega_s}(y) \chi_{\Omega_r}(x)  + \alpha_c\frac{u(y)-u(x)}{|x-y|^{N+2c}} \chi_{\Omega_r}(y) \chi_{\Omega_s}(x) \Big\} dy
\end{align}
with domain $D(\mathcal{A}):=\{ u\in L^2(\Omega): \mathcal{A}[u]\in L^2(\Omega)\}$ endorsed with the usual norm, $\| u\|^2_{D(\mathcal{A})}:= \|u\|^2_{L^2(\Omega)} + \|\mathcal{A} [u]\|^2_{L^2(\Omega)}$.

First we need to prove that $D(\mathcal{A})\subseteq D(\partial E)$ and to this end the only difficult point is to prove that given $u\in D(\mathcal{A})$ and defining $v=\mathcal{A}[u]$ we  have that $\mE(v, w-u) \leq E[w] - E[u]$ for every $w\in \mathcal{H}(\Omega)$. This is easy once we see that
\begin{equation}\label{eq:propiedad_Laplaciano_fraccionario}
- \int _{B}\int _{C} \frac{u(y)-u(x)}{|x-y|^{N+2c}} f(x)\ d yd x = \int _{C}\int _{B} \frac{u(y)-u(x)}{|x-y|^{N+2c}} f(y)\ d yd x
\end{equation}
which implies
\begin{equation}\label{eq:propiedad_Laplaciano_fraccionario_simetrico}
 - \int _{B}\int _{B} \frac{u(y)-u(x)}{|x-y|^{N+2c}} f(x)\ d yd x = \frac{1}{2} \int _{B}\int _{B} \frac{u(y)-u(x)}{|x-y|^{N+2c}} (f(y)-f(x))\ d yd x
\end{equation}
whenever this integrals are well defined for general domains $B,C$, a general function $f$ and $0<c<1$. In our case the chosen domains and the Cauchy-Schwarz Inequality ensure the integrals are well defined. It is also helpful to note that
$$
ab-b^2= \frac{a^2-b^2-(a-b)^2}{2}\leq \frac{a^2-b^2}{2}
$$
for any real numbers $a,b$. The next and final part is to see that $D(\mathcal{A})\supseteq D(\partial E)$, so to this point we take a $f\in L^2(\Omega)$ and look for a minimizer $J:\mathcal{H}(\Omega)\to \mathbb{R}$ defined by
$$
J[w]=E[w] + \int_\Omega \frac{w^2(x)}{2} -f(x)\cdot w(x)\ dx.
$$
This minimizer is precisely a function $u\in D(\mathcal{A})$ that satisfies $u+\mathcal{A}[u]=f$ weakly in $\Omega$, which also shows via well known arguments that $u\in D(\mathcal{A})$ since $\|\mathcal{A}[u]\|_{L^2(\Omega)}\leq ||f||_{L^2(\Omega)}$. Consequently the range of $I+\mathcal{A}$ is $L^2(\Omega)$ and this implies, see~\cite[Section 9.6.3, Thm 4]{MR2597943}, that $D(\mathcal{A})\supseteq D(\partial E)$.


%
%
Therefore, by \cite[9.6.3, Thm 3]{MR2597943}, we say that this equation is provided by the gradient flow of the energy $E[u]$, and in this sense it motivates our study. 
%
%
%

We now provide the existence results for our problem. It is based on the classical semigroup theory. The bilinear form $\mE:\mathcal{H}(\Omega_s\cup \Omega_r)\times \mathcal{H}(\Omega_s\cup \Omega_r)\rightarrow \rr$ defined by 
\[
\mE(u,v)=\frac{E(u+v)+E(u-v)}2
\]
is a densely defined, accretive, continuous and closed sesquilinear form on $L^2(\Omega_s\cup \Omega_r)$. It introduces the unbounded operator $-\mL:D(\mL)\subset L^2(\Omega_r\cup \Omega_s)$ by 
\[
\mE(u,\varphi)=(-\mL[u],\varphi)_{L^2(\Omega_r\cup \Omega_s)} \quad \forall \varphi\in \mH.
\]
Note that $\mathcal{L}=-\mathcal{A}$. It follows \cite[Prop.~1.5.1, p.29]{MR2124040} that $\mL$ generates a strongly continuous  semigroup of contraction $e^{t\mL}$ on $L^2(\Omega_r\cup \Omega_s)$. In particular since $\mE$ is symmetric, $\mL$ is self-adjoint and the following result holds \cite[Theorem 3.2.1]{MR1691574}.
\begin{theorem}\label{thm:existence_hilbert_CH}
For any  $u_0\in L^2(\Omega_r\cup \Omega_s)$ there exists a unique solution $u(t)=e^{t\mL}u_0\in C([0,\infty),L^2(\Omega))$ of problem \eqref{eq:main}. 
Moreover,  
\[u\in C([0,\infty):L^2(\Omega))\cap C((0,\infty):D(\mathcal{L}))\cap C^1((0,\infty):L^2(\Omega))
\]
and 
$$
\mE(u(t),u(t))\leq \frac{1}{2t}\|u_0\|^2_{L^2(\Omega)}, \forall t>0.
$$
\end{theorem}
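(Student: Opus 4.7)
The plan is to verify the hypotheses of the standard Hilbert-space form method invoked in the paragraph preceding the statement. Once $\mE$ is shown to be a densely defined, symmetric, accretive, continuous and closed sesquilinear form on $L^2(\Omega_s\cup\Omega_r)$, the existence, uniqueness and regularity of $u(t)=e^{t\mL}u_0$ follow directly from \cite[Prop.~1.5.1]{MR2124040} together with the self-adjoint semigroup theory in \cite[Theorem 3.2.1]{MR1691574}.

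First I would check the five properties of $\mE$ in sequence. Density in $L^2$ is immediate since $C_c^\infty(\Omega_s\cup\Omega_r)\subset\mH$: any smooth compactly supported function has finite Gagliardo seminorms for exponents in $(0,1)$. Symmetry is built into the polarization identity used to define $\mE$. Accretivity follows from $\mE(u,u)=E(u)\geq 0$, since each of the three double integrals in \eqref{energy.1} is nonnegative. Continuity on $\mH$ equipped with the graph norm $\|u\|_\mH^2:=\|u\|_{L^2}^2+E(u)$ is an application of Cauchy--Schwarz to each of the three Gagliardo-type integrals. The only slightly delicate property is closedness: given a Cauchy sequence $\{u_n\}\subset\mH$ in the norm $\|\cdot\|_\mH$, it converges in $L^2$ to some $u$; after extracting an a.e.-convergent subsequence, Fatou's lemma applied separately to each of the three nonnegative integrands yields simultaneously $u\in\mH$ and $\mE(u_n-u,u_n-u)\to 0$.

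Once these properties are in hand, \cite[Prop.~1.5.1]{MR2124040} produces the strongly continuous contraction semigroup $(e^{t\mL})_{t\geq 0}$ on $L^2(\Omega)$, and symmetry of $\mE$ makes $\mL$ self-adjoint. The semigroup is therefore analytic and maps $L^2$ into $D(\mL)$ for every $t>0$, which gives the asserted regularity class $u\in C([0,\infty);L^2)\cap C((0,\infty);D(\mL))\cap C^1((0,\infty);L^2)$. Note that $\mL=-\mathcal{A}$ coincides with the pointwise operator from the previous section on $D(\mathcal{A})\subset D(\mL)$, so the abstract solution does solve \eqref{eq:main}.

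For the smoothing inequality I would use the standard self-adjoint computation: self-adjointness implies that $t\mapsto\mE(u(t),u(t))$ is nonincreasing, while $\frac{d}{dt}\|u(t)\|_{L^2}^2=-2\mE(u(t),u(t))$. Integrating over $(0,t)$ and using monotonicity gives
\[
2t\,\mE(u(t),u(t))\leq 2\int_0^t\mE(u(\tau),u(\tau))\,d\tau=\|u_0\|_{L^2}^2-\|u(t)\|_{L^2}^2\leq\|u_0\|_{L^2}^2,
\]
which is precisely the claimed bound. The main obstacle in this plan is the closedness of the form, since the seminorms live on different subdomains and the coupling integral mixes them; everything else is bookkeeping and invocation of the abstract references.
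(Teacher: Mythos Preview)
Your proposal is correct and follows the same route the paper takes: the paragraph preceding the theorem simply asserts that $\mE$ is densely defined, symmetric, accretive, continuous and closed, and then attributes the whole statement to \cite[Prop.~1.5.1]{MR2124040} and \cite[Theorem~3.2.1]{MR1691574}. You have merely filled in the verifications of those five properties and written out the standard self-adjoint computation for the smoothing estimate, which the paper leaves inside the cited reference.
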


We now explain how this result can be extended to the case of $L^p(\Omega)$ solutions, $1\leq p\leq \infty$ and prove Theorem \ref{thm:existence_banach}. The strategy is classical (see for example \cite{MR2124040}).
First,  for any $u\in \mH(\Omega_s\cup \Omega_r)$ we have $|u|\in \mH(\Omega_s\cup \Omega_r)$ and $a(|u|,|u|)\leq a(u,u)$. This implies \cite[Corrolary 2.18]{MR2124040} that the semigroup is  positive: $u_0\geq 0 $ implies $e^{t\mL}u_0\geq 0$. In particular, comparation principle holds: $u_0\leq v_0$ then $e^{t\mL}u_0\leq e^{t\mL}u_0$. Also, since for any $u\in \mH(\Omega_s\cup \Omega_r)$ with $u\geq 0$ we have $1\wedge u=\max\{1,u\}\in \mH(\Omega_s\cup \Omega_r)$ and $a(1\wedge u,1\wedge u)\leq a(u,u)$ the semigroup is $L^\infty(\Omega)$ contractive. Once this has been established we use that $\mL$ generates a strongly continuous semigroup of contractions on $L^2(\Omega)$ to obtain that it also generates a strongly continuous semigroup of contractions on $L^p(\Omega)$ for any $2\leq p<\infty$. By duality, since $\mL$ is self-adjoint the same property transfers to   $L^p(\Omega)$ for any $1< p \leq 2$. The $L^1(\Omega)$ case follows by a density argument (see \cite{MR2124040}, p.56). The above consideration gives the existence and contraction property in  Theorem \ref{thm:existence_banach}. It remains to prove the mass conservation.
In view of the $L^1$-contraction property it is sufficient to consider the  case of initial data  $u_0\in L^1(\Omega)\cap L^2(\Omega)$.
 When $\Omega$ is bounded we use that the function identical one, $1_{\Omega}$, belongs to $\mH(\Omega_s\cup \Omega_r)$, $u(t)\in D(\mathcal {L})$ and then
\[
\frac{d}{dt}\int_{\Omega} u(x,t)dx=(\mL u(t),1_{\Omega})=-\mE(u(t),1_{\Omega})=0.
\]
When $\Omega$ is unbounded we consider an approximation of the identity, a smooth function  $\chi_R\in [0,1]$ such that $\chi_R\equiv 1$ in $|x|<R$ and $\chi_R\equiv 0$ in $|x|>2R$. Using that $\mE(\chi_R,\chi_R)\rightarrow 0$ as $R\rightarrow \infty$ and dominated convergence theorem show give us the conservation of the mass. This finishes the proof of \ref{thm:existence_banach}.

\section{The decay of the solutions}
As we said in the Introduction we consider 
$\Omega_s$ to be a Lipschitz bounded domain such that its complementary $\Omega_r=\rr^N\setminus \overline{\Omega_s}$ satisfies the measure density condition \eqref{measure.condition}. The assumption on $\Omega_s$ guarantees the existence of classical Sobolev embeddings  as well as Gagliardo-Nirenberg-Sobolev inequalities \cite{MR3990737}. The assumption on the exterior domain $\Omega_r$ is sufficient to obtain a Sobolev inequality in Lemma~\ref{GNS.ext.balls} of the appendix. Using  this Sobolev inequality we prove Nash-like inequalities for exterior domains and use them to prove Theorem \ref{decay} regarding the long time decay of the solutions. 

To simplify the presentation for $0<s<1$ and $\Omega$ an open set, we will denote by $[f]_{s,\Omega}$ the following quantity  
\[
[f]_{s,\Omega}^2:=\int_{\Omega} \int_{\Omega}\frac{(f(y)-f(x))^2}{|x-y|^{N+2s}}dxdy.
\]

\begin{lemma}\label{lemma.nash.2}(Nash's inequality for exterior domains)
Let  $N\geq 1$, $r\in (0,1)$. For any $f\in H^r(\Omega_r)\cap L^1(\Omega_r)$ the following holds
\begin{equation}
\label{nash.2}
  \|f\|_{L^2(\Omega_r)}\leq C(\Omega_r,r,N) \|f\|_{L^1(\Omega_r)}^{\frac{2r}{N+2r}}[f]_{r,\Omega_r}^{\frac N{N+2r}}.
\end{equation}
\end{lemma}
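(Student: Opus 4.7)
The strategy is the classical Sobolev-plus-interpolation derivation of Nash's inequality, transposed to the plump exterior domain $\Omega_r$.

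First, I would invoke the fractional Sobolev inequality for plump exterior domains which is proved in the appendix (Lemma \ref{GNS.ext.balls}). In the subcritical regime $2r<N$ it provides a constant $C=C(\Omega_r,r,N)>0$ such that
\[
\|f\|_{L^{2^{*}_{r}}(\Omega_r)} \;\leq\; C\, [f]_{r,\Omega_r},\qquad 2^{*}_{r}:=\frac{2N}{N-2r},
\]
for every $f\in H^r(\Omega_r)$. This is the only ingredient that genuinely uses the geometry of the exterior domain (the measure density condition \eqref{measure.condition}); without it the whole argument collapses. Hence the real obstacle hides in the appendix rather than here, and I would treat the Sobolev bound as a black box.

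Next, I would interpolate the $L^2$ norm between $L^1$ and $L^{2^{*}_{r}}$ by Hölder's inequality: for $\theta\in(0,1)$ determined by
\[
\frac{1}{2}=\frac{\theta}{1}+\frac{1-\theta}{2^{*}_{r}},
\]
one computes $\theta=\tfrac{2r}{N+2r}$ and $1-\theta=\tfrac{N}{N+2r}$, so that
\[
\|f\|_{L^2(\Omega_r)} \;\leq\; \|f\|_{L^1(\Omega_r)}^{\frac{2r}{N+2r}}\,\|f\|_{L^{2^{*}_{r}}(\Omega_r)}^{\frac{N}{N+2r}}.
\]
Plugging the Sobolev bound into the second factor gives exactly the desired estimate \eqref{nash.2}, with the constant of the Sobolev inequality raised to the power $N/(N+2r)$.

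The only case not covered by this scheme is the one-dimensional range $N=1$, $r\geq 1/2$, in which $H^r$ does not embed into any $L^{p^*}$ with $p^*<\infty$ via a critical embedding of the above type. In that situation I would instead apply the exterior Sobolev inequality for a smaller effective exponent (any $r'<\min\{r,1/2\}$, using that $[f]_{r',\Omega_r}$ can be controlled by $\|f\|_{L^2}$ and $[f]_{r,\Omega_r}$ via a sub-additivity/interpolation inequality for Gagliardo seminorms), and then repeat the same Hölder interpolation with the exponent adjusted so that the final powers still collapse to $(\tfrac{2r}{N+2r},\tfrac{N}{N+2r})$. This is the only slightly delicate point in the argument; everything else is routine and yields a constant depending only on $\Omega_r$, $r$ and $N$, as claimed.
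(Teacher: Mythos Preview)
Your argument in the regime $2r<N$ coincides with the paper's: H\"older interpolation between $L^1$ and $L^{2^*_r}$ followed by the exterior Sobolev inequality of Lemma~\ref{GNS.ext.balls}. The remaining case $N=1$, $r\geq 1/2$ is handled differently. The paper writes the one-dimensional exterior domain as a union of two half-lines, restricts $f$ to each, uses an even reflection to pass to functions on the whole line (with $[f_{even}]_{r,\rr}\leq 2[f]_{r,\rr_+}$), and then invokes an external reference for the Nash inequality on $\rr$.

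Your route---apply the subcritical exterior Sobolev inequality with some $r'<1/2$ and then absorb the discrepancy via the seminorm interpolation $[f]_{r',\Omega_r}\lesssim \|f\|_{L^2(\Omega_r)}^{1-r'/r}[f]_{r,\Omega_r}^{r'/r}$---is correct: substituting this into the $r'$-Nash bound and solving for $\|f\|_{L^2}$ recovers exactly the exponents $\tfrac{2r}{1+2r}$ and $\tfrac{1}{1+2r}$, independently of the choice of $r'$. The required seminorm interpolation is the elementary $\delta$-splitting argument that also appears in the appendix (claim~\eqref{claim.1}). Compared with the paper's proof, your approach avoids the reflection step and the external citation, at the price of a short algebraic computation to check that the exponents collapse; the paper's version is perhaps more transparent about why the one-dimensional case reduces to the full-line inequality.
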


\begin{proof}
	For $N\geq 2$ we have $N>2r$  and we use H\"older's inequality and Sobolev inequality for exterior domains in  Lemma \ref{GNS.ext.balls}:
	\[
	 \|f\|_{L^2(\Omega_r)}\leq \|f\|_{L^1(\Omega_r)}^{\frac{2r}{N+2r}}
\|f\|_{L^{2^*}(\Omega_r)}^{\frac{N}{N+2r}} 	 \leq C(\Omega_r,r,N) \|f\|_{L^1(\Omega_r)}^{\frac{2r}{N+2r}}[f]_{r,\Omega_r}^{\frac N{N+2r}}.
	\]
	In dimension $N=1$ we consider $\Omega_{r}=(-\infty,a)\cup(b,\infty)$, where $-\infty\leq a<b\leq\infty$, and write $f=f_-+f_{+}$ where $f_-$ and $f_{+}$ are the restrictions of $f$ to $(-\infty, a)$ and $(b,\infty)$ respectively. Since 
	\[ 
	[f_-]_{r,(-\infty,a)}^2+[f_+]_{r,(b,\infty)}^2\leq [f]_{r,\Omega_r}^2,
	\]
	it is sufficient to prove the above estimate only for $f_+$ and the corresponding interval $(b,\infty)$. After a  translation of the interval to the origin we have to prove that for any $f\in H^r(\rr_+)\cap L^1(\rr_+)$ it holds
	\[
	  \|f\|_{L^2(\rr_+)}\leq C(\Omega_r,r,N) \|f\|_{L^1(\rr_+)}^{\frac{2r}{1+2r}}[f]_{r,\rr_+}^{\frac 1{1+2r}}.
	\]
Let us consider $f_{even}$, the even extension of $f$.
Using \cite[Lemma 5.2]{MR2944369} we obtain that $f_{even}\in H^r(\rr)$ and $[f_{even}]_{r,\rr}\leq 2 [f]_{r,\rr_+}$. It is then sufficient to prove the inequality for functions defined on the whole line:
 \[
	  \|f\|_{L^2(\rr)}\leq C(\Omega_r,r,N) \|f\|_{L^1(\rr)}^{\frac{2r}{1+2r}}[f]_{r,\rr}^{\frac 1{1+2r}}.
	\]
This inequality holds in view of  \cite[Theorem~1.3]{MR2299447} which translates inequalities for the Laplacian to fractional Laplacian with the corresponding exponent.
\end{proof}

\begin{lemma}
	There exists a positive constant $C=C(\Omega_s,\Omega_r,N,c,r)$ such that 
	\begin{equation}
	\label{ineq.energy.p}
		 \|u\|^2_{L^2(\rr^N)} \leq C\Big(  \mE(u,u)+\|u\|_{L^1(\Omega_r)}^{\frac{4r}{N+2r}}\mE(u,u)^{\frac{N}{N+2r}}\Big).
	\end{equation}
	holds for all $u\in L^1(\rr^N)\cap \mathcal{H}(\rr^N)$. 
\end{lemma}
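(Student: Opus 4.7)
The plan is to split $\|u\|^2_{L^2(\rr^N)}=\|u\|^2_{L^2(\Omega_s)}+\|u\|^2_{L^2(\Omega_r)}$ and estimate each piece separately, applying Lemma~\ref{lemma.nash.2} on the exterior side and exploiting the coupling summand of $\mE(u,u)$ to transfer control from $\Omega_r$ to $\Omega_s$. For the exterior piece, I would square the Nash inequality in Lemma~\ref{lemma.nash.2} and observe that the semi-norm $[u]_{r,\Omega_r}^2$ is one of the three summands defining $E(u)$ up to the positive constant $\alpha_r/4$, so $[u]_{r,\Omega_r}^2\le C\,\mE(u,u)$. This already produces
\[
 \|u\|_{L^2(\Omega_r)}^2\le C\,\|u\|_{L^1(\Omega_r)}^{\frac{4r}{N+2r}}\,\mE(u,u)^{\frac{N}{N+2r}},
\]
which accounts for the second term on the right-hand side of \eqref{ineq.energy.p}.

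For the interior piece I would fix, once and for all, a bounded set $A\subset\Omega_r$ with $\mathrm{dist}(A,\overline{\Omega_s})>0$; since $\Omega_s$ is bounded one may take $A=B_{R+1}\setminus\overline{B_R}$ for some $R$ large enough that $\Omega_s\subset B_R$. On $\Omega_s\times A$ the kernel $|x-y|^{-N-2c}$ is bounded above and below by positive constants depending only on $\Omega_s$ and $c$. Setting $\bar u_A:=|A|^{-1}\int_A u(y)\,dy$ and using $u(x)^2\le 2(u(x)-\bar u_A)^2+2\bar u_A^2$, Cauchy--Schwarz gives
\[
 (u(x)-\bar u_A)^2\le \frac{1}{|A|}\int_A (u(x)-u(y))^2\,dy\le \frac{C}{|A|}\int_{\Omega_r}\frac{(u(x)-u(y))^2}{|x-y|^{N+2c}}\,dy,
\]
and integrating in $x\in\Omega_s$ bounds the resulting contribution by $C\,\mE(u,u)$, since the double integral that appears is dominated by the coupling summand of $E(u)$.

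The delicate point, and the only real obstacle, is the $\bar u_A^2$ term: the naive estimate $|\bar u_A|\le |A|^{-1}\|u\|_{L^1(A)}$ would introduce a $\|u\|_{L^1(\Omega_r)}^2$ summand that is absent from \eqref{ineq.energy.p}. To avoid this I would instead use Cauchy--Schwarz in $L^2$,
\[
 \bar u_A^2\le \frac{1}{|A|}\|u\|_{L^2(A)}^2\le \frac{1}{|A|}\|u\|_{L^2(\Omega_r)}^2,
\]
and then recycle the Nash-type bound for $\|u\|_{L^2(\Omega_r)}^2$ obtained in the first step. Multiplying by $|\Omega_s|$ and adding the contributions yields
\[
 \|u\|_{L^2(\Omega_s)}^2\le C\,\mE(u,u)+C\,\|u\|_{L^1(\Omega_r)}^{\frac{4r}{N+2r}}\,\mE(u,u)^{\frac{N}{N+2r}},
\]
and combining with the estimate for $\|u\|_{L^2(\Omega_r)}^2$ gives \eqref{ineq.energy.p}. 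The constant in the final inequality depends on $\Omega_s$, $\Omega_r$, $N$, $c$, $r$ and on the prescribed $\alpha$'s, exactly as in the statement.
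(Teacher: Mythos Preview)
Your proof is correct and follows the same overall architecture as the paper: split $\|u\|_{L^2(\rr^N)}^2$ into the $\Omega_s$ and $\Omega_r$ pieces, handle $\Omega_r$ by Nash (Lemma~\ref{lemma.nash.2}) against $[u]_{r,\Omega_r}^2\lesssim\mE(u,u)$, and use the coupling summand of $\mE$ to control $\|u\|_{L^2(\Omega_s)}^2$ by $\mE(u,u)+\|u\|_{L^2(\Omega_r)}^2$, then recycle the Nash bound for the last term.

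The one technical difference is in how the coupling term is exploited. You localise to a bounded annulus $A\subset\Omega_r$ at positive distance from $\Omega_s$, subtract the average $\bar u_A$, and run a Poincar\'e-type argument: $(u(x)-\bar u_A)^2\le |A|^{-1}\int_A(u(x)-u(y))^2\,dy$, then use that the kernel $|x-y|^{-N-2c}$ is bounded below on $\Omega_s\times A$. The paper instead works directly with the pointwise inequality $(a-b)^2\ge \tfrac12 a^2-b^2$ applied to $u(x)-u(y)$ inside the coupling integral, bounds the kernel by $(1+|x-y|)^{-N-2c}$, and integrates out the $y$-variable to get $E_1(u)\ge C\bigl(\|u\|_{L^2(\Omega_s)}^2-\|u\|_{L^2(\Omega_r)}^2\bigr)$. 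Both routes are short and elementary; yours makes the geometric picture (a fixed ``window'' into $\Omega_r$) more explicit, while the paper's is slightly more direct and avoids singling out a subset of $\Omega_r$.
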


\begin{proof}
 We have that $\mE(u,u)=E_1(u)+E_2(u)$, where
\[
E_1(u)=\frac{\alpha_s}{4} \int _{\Omega_s}\int _{\Omega_s} \frac{(u(x)-u(y))^2}{|x-y|^{N+2s}}\ d yd x+ \frac{\alpha_c}{2} \int _{\Omega_s}\int _{\Omega_r} \frac{(u(x)-u(y))^2}{|x-y|^{N+2c}}\ d yd x
\]
and
\[
E_2(u)=\frac{\alpha_r}{4} \int _{\Omega_r}\int _{\Omega_r} \frac{(u(x)-u(y))^2}{|x-y|^{N+2r}}\ d yd x.
\]

We estimate each of the above terms. In the case of $E_1$ we 
estimate it from bellow in term of the $L^2$-norms of $u$. Recall the following elementary inequality
\[
(a-b)^2=a^2-2ab+b^2\geq a^2-(1-\varepsilon) a^2-\frac{1}{1-\varepsilon}b^2+b^2=\eps(a^2-\frac 1{1-\varepsilon}b^2).
\]
Let us choose $R=R(\Omega_{s})>0$ such that $\Omega_s\subset B_R(0)$ and  $\varepsilon=1/2$. We get
\begin{align*}
\label{}
  E_1(u)&\geq \frac{\alpha_c}{2} \int _{\Omega_s} \int_ {\Omega_r} \frac{(u(x)-u(y))^2}{|x-y|^{N+2c}}dxdy
\geq \frac{\alpha_c}{2}   \int _{\Omega_s} \int_ {\Omega_r} \frac{(u(x)-u(y))^2}{(1+|x-y|)^{N+2c}}dxdy\\
  &\geq \frac {\alpha_c}4\int _{\Omega_s} \int_ {\Omega_r} \frac{u^2(x)-2u^2(y)}{(1+|x-y|)^{N+2c}} dxdy\\
  &= \frac {\alpha_c}4\int_{\Omega_s}u^2(x)\int _{\Omega_r}\frac {dy}{(1+|x-y|)^{N+2c}} dx-\frac{\alpha_c}{2}
  \int_{\Omega_r}u^2(y) \int _{\Omega_s}\frac{dx}{(1+|x-y|)^{N+2c}}dy\\
 &\geq  \frac {\alpha_c}4 \int_{\Omega_s}u^2(x)dx\int _{|y|>R}\frac {dy}{(2+|y|)^{N+2c}}-\frac{\alpha_c}{2}
  \int_{\Omega_r}u^2(y) dy\int _{|x|<R}dx \\
  &= \frac {\alpha_c}4 C(R,N,c) \int_{\Omega_s}u^2(x)dx -C(R,N)\frac{\alpha_c}{2}  \int_{\Omega_r}u^2(y) dy\\
  &\geq C(R,N,c) ( \|u\|^2_{L^2(\Omega_s)}-\|u\|^2_{L^2(\Omega_r)}).
\end{align*}
%
%
%
%
%
%
%
%
%
Using  Nash inequality for exterior domains \eqref{nash.2}
it follows that the $L^2(\rr^N)$ norm of $u$ satisfies
\begin{align*}
   \|u\|^2_{L^2(\rr^N)}&\leq C(\Omega_s, N, c) E_1(u)+  \|u\|^2_{L^2(\Omega_r)}\\
   &\leq C(\Omega_s, N, c) E_1(u)+ C(\Omega_{r},  r, N)\|u\|_{L^1(\Omega_r)}^{\frac{4r}{N+2r}}(E_2(u))^{\frac{N}{N+2r}},
\end{align*}
which finishes the proof.
\end{proof}

\begin{proof}[Proof of Theorem \ref{decay}]
\textbf{Step I. The behavior at $t=0$}.
We denote $\alpha=\min\{r,s\}$ and prove that
\begin{equation}\label{min.r.s}
\|v\|_{L^2(\rr^N)}^{2+\frac{4\alpha}N}\leq  C(\Omega_r,\Omega_s,r,s,N)\|v\|_{L^1(\rr^N)}^{\frac{4\alpha}N}(\|v\|^2_{L^2(\rr^N)}+\mE(v,v)), \quad \forall v\in \mathcal{H}(\rr^N)\cap L^1(\rr^N).
\end{equation}
We claim that 
\begin{equation}
\label{ineg.in.ball}
\|v\|_{L^2(\Omega_s)}^{2+\frac {4s}N}\leq C(\Omega_s,s,N)\|v\|_{L^1(\Omega_s)}^{\frac {4s}N}\|v\|_{H^{s}(\Omega_s)}^2.
\end{equation}
Using this inequality, Lemma \ref{lemma.nash.2} and the fact that $\|u\|_{L^1(\Omega_s)} \leq |\Omega_{s}|^{1/2}\|u\|_{L^2(\Omega_s)}$ we obtain that
\begin{align*}
\mE(v,v)+\|v\|_{L^2(\rr^N)}^2&\gtrsim [v]_{s,\Omega_s}^2+\|v\|_{L^2(\Omega_s)}^2+[v]_{r,\Omega_r}^2+\|v\|_{L^2(\Omega_r)}^2\\
&\gtrsim \|v\|_{L^2(\Omega_s)}^2\Big(\frac{\|v\|_{L^2(\Omega_s)}}{\|v\|_{L^1(\Omega_s)}}\Big)^{\frac{4s}N}+\|v\|^2_{L^2(\Omega_r)}\Big[ \Big(\frac{\|v\|_{L^2(\Omega_r)}}{\|v\|_{L^1(\Omega_r)}}\Big)^{\frac{4s}N}+1\Big]\\
&\gtrsim \|v\|_{L^2(\Omega_s)}^2\Big(\frac{\|v\|_{L^2(\Omega_s)}}{\|v\|_{L^1(\Omega_s)}}\Big)^{\frac{4\alpha}N}|\Omega_s|^{\frac{2(\alpha-s)}N}+
C(s,r)\|v\|_{L^2(\Omega_r)}^2\Big(\frac{\|v\|_{L^2(\Omega_r)}}{\|v\|_{L^1(\Omega_r)}}\Big)^{\frac{4\alpha}N},
\end{align*}
which gives us   inequality \eqref{min.r.s}.
In view of \cite[Th.~2.1]{MR898496} we obtain that the semigroup satisfies  
\begin{equation}
\label{est.1.infty.0}
\|S(t)u_0\|_{L^\infty(\rr^N)}\leq \frac{\beta e^{  t}}{t^{\frac N{2\alpha}}}\|u_0\|_{L^1(\rr^N)}\quad \forall u_0\in L^1(\rr^N)
\end{equation}
for a certain positive constant $\beta$. This estimate  shows that the solution belongs to all the spaces $L^p(\rr^N)$, $1\leq p\leq \infty$, whenever the initial datum belongs to $L^1(\rr^N)$. Using the $L^1$-contraction property one can obtain that the map $t\rightarrow \|S(t)\|_{L^1,L^\infty}$ is a decreasing function and then the above estimate can be improved for $t>1$ but without obtaining a decay for large times
\begin{equation}
\label{est.1.infty.1}
  \|S(t)u_0\|_{L^\infty(\rr^N)}\leq \frac{\beta'}{\min\{1,t\}^{\frac N{2\alpha}}}\|u_0\|_{L^1(\rr^N)}\quad \forall u_0\in L^1(\rr^N), \forall \ t>0.
\end{equation}
This gives the desired estimate for small times $t<1$.

It remains to prove claim \eqref{ineg.in.ball}. We use Sobolev embeddings and interpolation inequalities for the Lipschitz bounded domain  $\Omega_s$. We refer to \cite{MR3990737} for a similar approach in dimension two. The classical Sobolev embedding (see for example \cite[Theorem~B]{MR3990737}),
\[
\|v\|_{L^2(\Omega_s)}\leq C(\Omega_s,s,N) \|v\|_{W^{\frac{2sN}{4s+2N},\frac{4s+2N}{4s+N}}(\Omega_s)}
\]
and  Gagliardo-Nirenberg-Sobolev inequalities (see for example \cite[Corollary~1]{MR3990737}),
\[
\|v\|_{W^{\frac{2sN}{4s+2N},\frac{4s+2N}{4s+N}}(\Omega_s)}\leq C(\Omega_s,s,N) \|v\|_{W^{0,1}(\Omega_s)}^\theta \|f\|^{1-\theta}_{W^{s,2}(\Omega_s)},\ \theta=\frac{2s}{2s+N},
\]
give  the desired result.

Let us now prove that this blow-up at $t\downarrow 0$ cannot be improved. In fact \cite[Th.~2.1]{MR898496} shows that estimate \eqref{est.1.infty.0} is equivalent to the estimate
\begin{equation}
\label{est.nash.10}
\|v\|_{L^2(\rr^N)}^{2+\frac{4\alpha}N}\leq C\|v\|_1^{\frac{4\alpha}N}(\|v\|^2_{L^2(\rr^N)}+\mE(v,v)),\ \forall v\in \mathcal{H}(\rr^N)\cap L^1(\rr^N).
\end{equation}
 We prove  that any $\alpha>0$ for which the above inequality holds satisfies $\alpha\leq 
 \min\{r,s\}$. Choose any function $\varphi\in C_c^\infty(\rr^N)$ supported in the unit ball and define $\varphi_\eps(x)=\varphi((x-x_0)/\eps)$ for some point $x_0\in\rr^N$ and $\eps>0$ a small parameter. It follows that ${\rm{supp}}(\varphi_\eps)\subset B_\eps(x_0)$ and $\|\varphi_\eps\|_{L^p(\rr^N)}\simeq 
 \eps^{\frac Np}$. Introducing this estimate in \eqref{est.nash.10} we obtain that
 \[
 \varepsilon^{\frac N2(2+\frac {4\alpha}N)}\lesssim \varepsilon^{4\alpha}(\eps^N+\mathcal{E}(\varphi_\eps,\varphi_\eps))
  \]
and then
\[
\eps^{N-2\alpha}\lesssim \eps^N +\mathcal{E}(\varphi_\eps,\varphi_\eps).
\]

We now prove that choosing $x_0\in \Omega_s$ gives us that $\alpha\leq s$ while $x_0\in \Omega_r$ gives $\alpha\leq r$. 

Case I. Let us choose $x_0\in \Omega_s$ and $\eps>0$  such that $\eps<d(x_0,\Omega_r)/2$. Thus $B_\eps(x_0)\subset \Omega_s$, the support of $\varphi_\eps$ is contained in the ball $\Omega_s$ and $[\varphi_\eps]_{r,\Omega_r}=0$. It follows that  
\begin{align*}
	\mathcal{E}(\varphi_\eps,\varphi_\eps)&\leq \eps^{N-2s}[\varphi]_{s,\rr^N}+\int_{\Omega_s}\int_{\Omega_r}\frac{(\varphi_\eps(x)-\varphi_\eps(y))^2}{|x-y|^{N+2c}}dydx\\
	&\leq  \eps^{N-2s}[\varphi]_{s,\rr^N}+\int_{\Omega_s}\varphi^2_\eps(x)\int_{\Omega_r}\frac{1}{|x-y|^{N+2c}}dydx\\
	&\leq  \eps^{N-2s}[\varphi]_{s,\rr^N}+\int_{B_\eps(x_0) }\varphi^2(\frac {x-x_0}\eps)\int_{|y-x|>d(x_0,\Omega_r)/2}\frac{1}{|x-y|^{N+2c}}dydx\\
	&\lesssim \eps^{N-2s}[\varphi]_{s,\rr^N}+ \eps^N \int_{|x|<1}\varphi^2(x)dx.
\end{align*}
It implies that
\[
\eps^{N-2\alpha}\lesssim  \eps^N +\eps^{N-2s}, \quad \forall \eps\in (0,1), 
\]
which clearly implies $\alpha\leq s$.

Case II. Let now choose $x_0\in \Omega_r$ and $\eps>0$ small enough such that  $B_\eps(x_0)\subset \Omega_r$. Thus $ {\rm {supp}}\ (\varphi_\eps) \subset B_\eps(x_0)\subset \Omega_r$ and $[\varphi_\eps]_{s,\Omega_s}=0$. In this case the cross term is small similarly to the previous case
\begin{align*}
	\int_{\Omega_s}\int_{\Omega_r}&\frac{(\varphi_\eps(x)-\varphi_\eps(y))^2}{|x-y|^{N+2c}}dydx=
	\int_{\Omega_r}\varphi^2_\eps(y)\int_{\Omega_s}\frac{ 1}{|x-y|^{N+2c}}dxdy\\
	&=\int_{|y-x_0|<\eps}  \varphi^2_\eps(y)\int_{\Omega_s}\frac{ 1}{|x-y|^{N+2c}}dxdy\lesssim \eps^N\int_{|z|<1}\varphi^2(z)dz.
\end{align*}
It follows that
\[
\mathcal{E}(\varphi_\eps,\varphi_\eps) \leq \eps^{N-2r}[\varphi]^2_{r,\rr^N}+\eps^N\int_{|z|<1}\varphi^2(z)dz.
\]
Similar as in the first case we get $\alpha\leq r$.

\textbf{Step II. Behavior at infinity}
We now improve \eqref{est.1.infty.1} for large times. Observe that we deal with a   symmetric Markov semigroup. Indeed 
 the bilinear form $\mE$ associated with the operator $\ml$  is symmetric and satisfies the following: $u\in \mathcal{H}(\rr^N)$ implies $|u|\in \mathcal{H}(\rr^N)$ and
  $\mE(|u|,|u|)\leq \mE(u,u)$. This shows that the semigroup is positive \cite[Corollary 2.18, p.58]{MR2124040}. Moreover for any $u\in \mathcal{H}(\rr^N), u\geq 0$ we have $1\wedge u=\max\{1,u\}\in \mathcal{H}(\rr^N)$ and 
  $\mE(1\wedge u,1\wedge u)\leq \mE(u,u)$. In view of \cite[Corollary 2.18, p.58]{MR2124040} (se also \cite[Theorem 1.4.1, p.25]{MR2778606}), the form $\mE$ generates a symmetric Markov semigroup. 

We use inequality \eqref{ineq.energy.p}:  \[
   \|u\|^2_{L^2(\rr^N)}\leq C \big( \mE(u,u)+\|u\|_{L^1(\rr^N)}^{\frac{4r}{N+2r}} \mE(u,u) ^{\frac{N}{N+2r}}\big)\ \forall u\in \mathcal{H}(\rr^N)\cap L^1(\rr^N).
  \]
  For functions $u$ with $\|u\|_{L^1(\rr^N)}\leq 1$ we have 
  $
   \|u\|^2_{L^2(\rr^N)}\leq h(\mE(u,u))$ with 
   $ h(t)=C ( t+  t ^{\frac{N}{N+2r}} ).
  $
  This shows that 
$
  \mE(u,u)\geq h^{-1}( \|u\|^2_{L^2(\rr^N)}).
  $
  When restricting to the class of functions $u$ with their $L^2$ norm less than a constant $a$ we use the behavior of $h^{-1}(t)$ near the origin: $h^{-1}(t)\simeq t^{1+\frac{2r}N}$. Then
  \[
  \mE(u,u)\geq h^{-1}( \|u\|^2_{L^2(\rr^N)})\geq c_{r,N} ( \|u\|^2_{L^2(\rr^N)})^{\frac{N+2r}N}, \quad \forall \|u\|^2_{L^2(\rr^N)}\leq a.
  \]
  We are in the framework of \cite[Prop.~III.2, Th.~III.3]{MR1418518} with $\theta(t)=t^{1+\frac{2r}{N}}$. Indeed, from the previous $L^1-L^\infty$ property we know that there exists a positive time $t_0$ such that
  $\|S(t_0)\|_{1,2}=a$ or $\|S(t_0)\|_{1,\infty}=a^2<\infty$. It follows that we have the following decay of the semigroup:
\begin{equation}
\label{est.1.infty.2}
  \|S(t)u_0\|_{L^\infty(\rr^N)}\leq \frac{\beta}{t^{\frac N{2r}}}\|u_0\|_{L^1(\rr^N)}\quad \forall u_0\in L^1(\rr^N), \forall \ t>t_0,
\end{equation}
 where $t_0$ depends only on $a$ above.  
 Together \eqref{est.1.infty.1} and \eqref{est.1.infty.2} give the desired result.

 We now treat the optimality of the decay for large times. Let $\alpha>0$ be such the following
$$
\label{est.1.infty.0.t0}
\|S(t)u_0\|_{L^\infty(\rr^N)}\leq \frac{A}{t^{\frac N{2\alpha}}}\|u_0\|_{L^1(\rr^N)}\quad \forall u_0\in L^1(\rr^N), \forall t>t_0,
$$
holds for some positive  time
   $t_0>0$  and some constant $A>0$.
In view of \cite[Prop. III.1]{MR1418518} it follows that there exist two positive constants $C$ and $k=k(t_0)$ such that
 \[
 \|v\|_{L^2(\rr^N)}^{2+\frac{4\alpha}N}\leq C\mE(v,v), \quad \forall v\in D(\ml)\cap L^1(\rr^N), \ \|v\|_{L^1(\rr^N)}\leq 1, \ \|v\|_{L^2(\rr^N)}\leq k.
 \]
 Renormalizing it follows that we must have the following inequality
 \begin{equation}\label{normalized} 
  \|v\|_{L^2(\rr^N)}^{2+\frac{4\alpha}N}\leq C  \|v\|_{L^1(\rr^N)}^{\frac{4\alpha}N}\mE(v,v), \quad \forall v\in D(\ml)\cap L^1(\rr^N), \ \|v\|_{L^2(\rr^N)}\leq k \|v\|_{L^1(\rr^N)}.
  \end{equation}
 
  We prove that this implies $\alpha\geq r$.  Let us choose 
  $\varphi\in C_c^\infty(\rr^N)$ such that it is supported in the unit ball and  two sequences $(x_n)_{n\geq 1}\in \Omega_r$ and $(r_n)_{n\geq 1}\in \rr$ such that $r_n\rightarrow \infty$ as $n\rightarrow \infty$ and $|x_n|>>\max\{r_n, r_n^{\frac{2\alpha}{N+2r}+1}\}$. Since $\Omega_s$ is a bounded domain it implies that the balls $B_{r_n}(x_n)$ are included in $\Omega_r$. 
  We consider $\varphi_n(x)=\varphi((x-x_n)/r_n)$. It follows that $\|\varphi_n\|_{L^p(\rr^N)}\simeq r_n ^{N/p}$ for $1\leq p\leq \infty$, with  ${\rm supp}\ (\varphi_n)\subset B_{r_n}(x_n)\subset \Omega_r$ so $[\varphi_n]_{s,\Omega_s}=0$ and
  \[
  [\varphi_n]^2_{r,\Omega_r}\leq [\varphi_n]^2_{r,\rr^N}
= \int_{\rr^N}\int_{\rr^N}\frac{(\varphi((x-x_n)/r_n)-\varphi_n((y-x_n)/r_n))^2}{|x-y|^{N+2r}}dxdy\simeq
r_n^{N-2r}[\varphi]^2_{r,\rr^N}.
  \] 
  Also $ \|\varphi_n\|_{L^2(\rr^N)}\leq k \|\varphi_n\|_{L^1(\rr^N)}$ for $n$ large enough.  
  We claim that for constant $C_s$ depending on $\Omega_s$ we have
  \begin{equation}\label{ineg.termen.mixt}
 I= \int_{\Omega_{s}}\int_{\Omega_{r}}\frac{(\varphi_n(x)- \varphi_n(y))^2 }{|x-y|^{N+2c}}dydx\lesssim \frac{r_n^N}{(|x_n|-r_n-C_s)^{N+2r}}.
  \end{equation}
  Introducing function $\varphi_n$ in \eqref{normalized} we obtain for all $n$ large enough  
  \[
  (r_n^{\frac N2})^{2+\frac{4\alpha}N}\lesssim (r_n^{N})^{\frac{4\alpha}{N}} \Big(r_n^{N-2r}+\frac{r_n^N}{(|x_n|-r_n-C_s)^{N+2r}}\Big)
  \]
  and hence 
  \[
  1\lesssim r_n^{2(\alpha-r)}+\frac{r_n^{2\alpha}}{(|x_n|-r_n-C_s)^{N+2r}}.
  \]
  Since $|x_n|>>r_n^{\frac{2\alpha}{N+2r}+1}$ the last term goes to zero as $n\rightarrow \infty$. 
  This clearly implies $\alpha\geq r$.
  
  It remains to prove \eqref{ineg.termen.mixt}. Let us first observe that for $x\in \Omega_s$ and $|y-x_n|>r_n$ we have $|y-x|\geq |y|-|x|\geq |x_n|-r_n-C_s$ for some constant $C_s$ depending on $\Omega_s$. 
  Since ${\rm supp}\ (\varphi_n)\subset B_{r_n}(x_n)\subset \Omega_r$  we have
  \begin{align*}
  I=& \int_{\Omega_s}\int_{\Omega_r}\frac{  \varphi^2_n(y)dy }{|x-y|^{N+2c}}dx =\int_{\Omega_r} \varphi^2_n(y) \int_{\Omega_s}\frac{ dx }{|x-y|^{N+2c}}dy=\int_{B_{r_n}(y_n)} \varphi^2_n(y) \int_{\Omega_s}\frac{ dx }{|x-y|^{N+2c}}dy
  \\
 \leq&\int_{|y-x_n|<r_n}\varphi^2\big( \frac{y-x_n}{r_n}\big)\int_{\Omega_s} \frac{dx}{ |x_n|-r_n-C_s}dy\simeq
 \frac{r_n^N}{(|x_n|-r_n-C_s)^{N+2c}}
  \end{align*}
and the proof is finished.
  \end{proof}

\section{The asymptotic expansion of solutions}In this section we prove Theorem \ref{first.term}. The proof is quite technical and we will split it in various subsections. It is based on the method of rescaled solutions, see \cite{MR1028745,MR1977429}. To simplify the presentation  we will refer simply by $B, B^c$ to the unit ball $B_1(0)$ and its open complementary $\mathbb{R}^N\setminus \overline{B_1(0)}$, and by $B_\rho, B^c_\rho$ when the radius of the ball is $\rho$.  
For the reader's convenience  we will not make always explicit the time dependence of $u(x,t)$ unless it is necessarily.

Given a solution $u$ of problem~\eqref{eq:main} we consider
$$
u_\lambda(x,t)=\lambda^N u(\lambda x, \lambda^{2r}t)
$$
which is a solution to
\begin{equation}\label{rescaled}
\begin{cases}
\displaystyle u_{\lambda,t}(x,t)=\alpha_s\lambda^{2r-2s}\int _{B_{1/\lambda}} \frac{u_\lambda(y)-u_\lambda(x)}{|x-y|^{N+2s}}dy + \alpha_c\lambda^{2r-2c}\int _{B^c_{1/\lambda}} \frac{u_\lambda(y)-u_\lambda(x)}{|x-y|^{N+2c}}dy,\ &x\in B_{1/\lambda},\ t>0,\\

\displaystyle u_{\lambda,t}(x,t)= \alpha_r\int _{B^c_{1/\lambda}} \frac{u_\lambda(y)-u_\lambda(x)}{|x-y|^{N+2r}}dy + \alpha_c \lambda^{2r-2c}\int _{B_{1/\lambda}} \frac{u_\lambda(y)-u_\lambda(x)}{|x-y|^{N+2c}}dy,\ &x\in B^c_{1/\lambda},\ t>0,\\

u_\lambda(x,0)=\lambda^N u_0(\lambda x),
\end{cases}
\end{equation}
where for simplicity we omit the dependence on $t$.

Observe that the equation can be written as 
\[
\begin{cases}
\partial_t u_\lambda (x,t)=(\mathcal{L}_\lambda u)(x,t), & x\in \rr^N,\ t>0,\\
u_\lambda(0,x)=\lambda^N u_0(\lambda x), &x\in \rr^N,
\end{cases}
\]
where for any $\lambda>0$ we set
$$
\label{Llambda}
(\ll\varphi)(x)=
\begin{cases}
	\displaystyle \alpha_s\lambda^{2r-2s}\int_{|y|<1/\lambda}\frac{\varphi(y)-\varphi(x)}{|x-y|^{N+2s}}dy+
	\alpha_c\lambda^{2r-2c}\int_{|y|>1/\lambda}\frac{\varphi(y)-\varphi(x)}{|x-y|^{N+2c}}dy,& |x|<\frac 1\lambda,\\[20pt]
	\displaystyle	\alpha_r\int_{|y|>1/\lambda}\frac{\varphi(y)-\varphi(x)}{|x-y|^{N+2r}}dy+\alpha_c\lambda^{2r-2c}\int_{|y|<1/\lambda}
	\frac{\varphi(y)-\varphi(x)}{|x-y|^{N+2c}}dy,& |x|>\frac 1\lambda.
\end{cases}
$$

This section is divided as follows: first we obtain various estimates for operator $\ll$ and for the bilinear form associated with it $\mathcal{E}_\lambda(\varphi,\varphi)=(-\ll\varphi,\varphi)$. We use them to obtain uniform estimates for the family of rescaled solutions $(u_\lambda)_{\lambda>0}$ and use them to prove the compactness of the considered family which will converge to a profile $U_M$. We will characterize the profile $U_M$ to be the unique solution of the fractional heat equation $U_t+C_{\Omega_{r}}(-\Delta)^{r}U=0$ with initial data $M\delta_0$ taken in the sense of measures, see \cite{MR3614666}, where $C_{\Omega_{r}}$ is the constant from~\eqref{eq:constants}. Let us recall first that in \cite[Th. 9.1]{MR3614666} the authors show that for any initial data $\mu_0\in \mathcal{M}_s(\rr^N)$, the space of locally finite Radon measures 
satisfying
\[
\int_{\rr^N}(1+|x|)^{-(N+2s)}d|\mu|(x)<\infty,
\]
there exists  a unique very weak solution of the fractional heat equation in the following sense:
\begin{definition}\label{very-weak}
	We say that $u$ is a very weak solution of the equation \eqref{eq:fractional-heat} if\\
	i) $u\in L^1_{loc}(0,T:L^1(\rr^N, (1+|x|)^{-(N+2s)}))$,\\
	 ii) it satisfies the equality
	\[
	\int_0^\infty \int_{\rr^N} u(x,t)\partial_t\psi(x,t)dxdt=\int_0^\infty \int_{\rr^N} u(x,t)(-\Delta)^r\psi(x,t)dxdt
	\] 
	for all   $\psi \in C^\infty_c((0,\infty)\times \rr^N)$ and\\
	 iii) its trace is $\mu_0$ in the following sense 
	\[
	\int_{\rr^N}\psi d\mu_0 =\lim_{t\rightarrow 0^+} \int_{\rr^N} u(x,t)\psi(x)dx, \quad \text{for all}\ 
	\psi\in C_0(\rr^N).
	\]
\end{definition}
Moreover, the above very weak solutions are given by the representation formula
\[
U(x,t)=\int_{\rr^N}K^r_t(x-y) d\mu_0(y).
\]
where $K_t^r$ is a smooth function. For a collection of basic properties of the kernel $K_t^r$ we refer to \cite[Section 2.2]{MR3789847}.

\subsection{Estimates for the operator $\ll$}

We include here various lemmas used  in the proof of Theorem \ref{first.term}

\begin{lemma}\label{est.Llamba}
Let $\rho>0$ and $1/\lambda<\rho/2$. 
	For any  $\varphi\in W^{2,\infty}(\rr^d)$ such that $\varphi(x)=\varphi(0)$ in the ball $B_\rho(0)$ the following holds:
	\[
	|(\ll \varphi)(x)|\lesssim 
	\begin{cases}
		\lambda^{2r-2c} \rho^{-2c} \|\varphi\|_{L^\infty(\rr^N)}, & |x|<1/\lambda,\\[10pt]
	\rho^{2-2r} \|D^2 \varphi \|_{L^\infty(\rr^N)} +\rho^{-2r}\|\varphi\|_{L^\infty(\rr^N)},&\frac 1\lambda <|x| <\rho,\\[10pt]
	\rho^{2-2r}\|D^2 \varphi \|_{L^\infty(\rr^N)} +(\rho^{-2r}+\lambda^{2r-2c-N}\rho^{-N-2c})\|\varphi\|_{L^\infty(\rr^N)}, & |x|>\rho.
	\end{cases} 
	\]
\end{lemma}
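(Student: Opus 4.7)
I would split according to the three regimes of $|x|$ and in each one reduce the two integrals defining $\ll\varphi$ to a form controlled by two basic tools: the hypothesis that $\varphi\equiv\varphi(0)$ on $B_\rho$, which kills the integrand on large subregions, and the standard pointwise estimate for the fractional Laplacian of a $W^{2,\infty}$ function,
\begin{equation*}
\Bigl|\mathrm{PV}\int_{\rr^N}\frac{\varphi(y)-\varphi(x)}{|x-y|^{N+2r}}dy\Bigr|\lesssim \|D^2\varphi\|_{L^\infty}R^{2-2r}+\|\varphi\|_{L^\infty}R^{-2r},\quad R>0,
\end{equation*}
proved by symmetrizing the integrand as $\varphi(y)+\varphi(2x-y)-2\varphi(x)$, Taylor bounding it by $\|D^2\varphi\|_{L^\infty}|y-x|^2$ on $\{|y-x|<R\}$ and by $4\|\varphi\|_{L^\infty}$ on the tail. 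The recurring trick is to rewrite $\int_{|y|>1/\lambda}=\int_{\rr^N}-\int_{|y|<1/\lambda}$, separating a full-space principal value from an explicit tail.

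\textbf{Case-by-case.} For $|x|<1/\lambda$: since $B_{1/\lambda}\subset B_\rho$, the $s$-integrand vanishes outright; in the $c$-integral it also vanishes on $\{1/\lambda<|y|<\rho\}$, and on $\{|y|>\rho\}$ one has $|x-y|\geq|y|/2$ because $|x|<\rho/2$, giving a bound $\|\varphi\|_{L^\infty}\int_{|y|>\rho}|y|^{-N-2c}dy\simeq \rho^{-2c}\|\varphi\|_{L^\infty}$, which combined with the prefactor $\lambda^{2r-2c}$ yields the first claim. For $1/\lambda<|x|<\rho$: still $\varphi(x)=\varphi(0)$, the $c$-integral is identically zero, and the splitting trick applied to the $r$-integral leaves only $\int_{\rr^N}$, to which the pointwise estimate with $R=\rho$ applies. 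For $|x|>\rho$: the full-space piece of the $r$-integral is again controlled by the pointwise bound with $R=\rho$, and the truncation integral by $|x-y|\geq|x|-1/\lambda>\rho/2$, giving $\|\varphi\|_{L^\infty}\lambda^{-N}\rho^{-N-2r}\lesssim \|\varphi\|_{L^\infty}\rho^{-2r}$ via $\lambda^{-N}<\rho^N$; the $c$-integral on $\{|y|<1/\lambda\}$ is treated identically and contributes $\lambda^{2r-2c-N}\rho^{-N-2c}\|\varphi\|_{L^\infty}$ after the prefactor $\lambda^{2r-2c}$.

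\textbf{Expected obstacle.} The mechanics are essentially bookkeeping; the only mild subtlety is the choice $R=\rho$ in the pointwise estimate (rather than the seemingly natural $R=1/\lambda$ or $R=|x|$), as this is precisely what makes the three regional bounds compatible and keeps $\lambda$ out of the leading $\rho$-powers once $|x|>1/\lambda$. Checking that the auxiliary tail integrals are dominated by the main $\rho^{-2r}$ contribution uses only the standing assumption $1/\lambda<\rho/2$.
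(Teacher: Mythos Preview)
Your proposal is correct and follows essentially the same route as the paper: the same three-regime split, the same use of $\varphi\equiv\varphi(0)$ on $B_\rho$ to annihilate large portions of the integrands, and the same near-field/far-field Taylor-plus-tail estimate. The only cosmetic difference is that you package the second-order bound as a black-box pointwise estimate for the full-space fractional Laplacian and reach it via the decomposition $\int_{|y|>1/\lambda}=\int_{\rr^N}-\int_{|y|<1/\lambda}$, whereas the paper works directly on the given domains (exploiting $\nabla\varphi(x)=0$ in the middle regime and the inclusion $B_{\rho/2}(x)\subset\{|y|>1/\lambda\}$ in the outer regime); both lead to the same bounds with the same bookkeeping.
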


\begin{proof}
Let us first consider the case  $|x|<1/\lambda$. Since $1/\lambda <\rho/2$ and $\varphi$ is constant in the ball $B_\rho(0)$ the first term in $(\ll \varphi)(x)$ vanishes. For the second term we use  that $\varphi(y)=\varphi(x)$ for $|y|<\delta$ and $|y-x|>\rho/2$ when $|y|>\rho$:
\begin{align*}
\int_{|y|>1/\lambda}&\frac{|\varphi(y)-\varphi(x)|}{|x-y|^{N+2c}}dy=
\int_{|y|>\rho}\frac{|\varphi(y)-\varphi(x)|}{|x-y|^{N+2c}}dy\leq 2\|\varphi\|_{L^\infty(\rr^N)}\int _{|y|>\rho}\frac 1{|y-x|^{N+2c}}dy\\
&\lesssim 
\|\varphi\|_{L^\infty(\rr^N)}\int _{|y-x|>\rho/2}\frac 1{|y-x|^{N+2c}}dy\simeq
\|\varphi\|_{L^\infty(\rr^N)} \int _{\rho/2}^{\infty} s^{-1-2a}\simeq \rho^{-2a} \|\varphi\|_{L^\infty(\rr^N)}.
\end{align*}
When $1/\rho<|x|<\rho$ and $|y|<\rho$, $\varphi(y)=\varphi(x)$ so the second term in $\ll$ vanishes. Thus
   \begin{align*}
\label{}
  (\ll \varphi)(x)&=\alpha_r\int_{|y|>1/\lambda}\frac{\varphi(y)-\varphi(x)}{|x-y|^{N+2r}}dy=
 \alpha_r \int_{|y|>\rho}\frac{\varphi(y)-\varphi(x)}{|x-y|^{N+2r}}dy\\
  &=\alpha_r \int_{\rho<|y|<2\rho}\frac{\varphi(y)-\varphi(x)}{|x-y|^{N+2r}}dy+ \alpha_r\int_{|y|>2\rho}\frac{|\varphi(y)-\varphi(x)|}{|x-y|^{N+2r}}dy.
\end{align*}
For the first term we use the fact that the gradient of $\varphi$ vanishes at the point $x$. Hence $\varphi(y)-\varphi(x)=(y-x)\nabla \varphi (x)+|y-x|^2 O(\|D^2\varphi\|_{L^\infty(\rr^N)}) =  |y-x|^2 O(\|D^2\varphi\|_{L^\infty(\rr^N)})$. We obtain that
\begin{align*}
\label{}
   |   (\ll \varphi)(x)|& \lesssim   \|D^2\varphi\|_{L^\infty(\rr^N)}\int_{\rho<|y|<2\rho}\frac{dy}{|x-y|^{N+2r-2}}+\|\varphi\|_{L^\infty(\rr^N)}\int_{|y|>2\rho}\frac{1}{|x-y|^{N+2r}}dy\\
   &\lesssim \|D^2\varphi\|_{L^\infty(\rr^N)}\int_{|z|<3\rho}\frac{dz}{|z|^{N+2r-2}}+\|\varphi\|_{L^\infty(\rr^N)}\int_{|z|<\rho}\frac{dz}{|z|^{N+2r}}\\
   &\lesssim \rho^{2-2r}\|D^2\varphi\|_{L^\infty(\rr^N)}+\rho^{-2r}\|\varphi\|_{L^\infty(\rr^N)}.
\end{align*}
Let us now consider the case $|x|>\rho$. Here we split the operator in two parts:
\begin{align*}
\label{}
  (\ll \varphi)(x)&=\alpha_r\int_{1/\lambda<|y|}\frac{\varphi(y)-\varphi(x)}{|x-y|^{N+2r}}dy+\alpha_c\lambda^{2r-2c}\int_{|y|<1/\lambda}\frac{\varphi(y)-\varphi(x)}{|x-y|^{N+2c}}\\&
  :=\alpha_r A+\alpha_c B.
\end{align*}
In the case of $A$ we use the fact $B_{\rho/2}(x)\subset \{y: |y|<1/\lambda\}$ and also that the function $(y-x)\nabla \varphi (x)$ is odd and therefore
$$
\int_{1/\lambda<|y|, |x-y|<\rho/2}\frac{(y-x)\nabla \varphi(x)}{|x-y|^{N+2r}}dy = 0.
$$
Hence
\[
A=\int_{1/\lambda<|y|, |x-y|<\rho/2}\frac{\varphi(y)-\varphi(x)-(y-x)\nabla \varphi(x)}{|x-y|^{N+2r}}dy+\int_{1/\lambda<|y|, |y-x|>\rho/2}\frac{\varphi(y)-\varphi(x)}{|x-y|^{N+2r}}dy\]
and
\begin{align*}
|A| &\lesssim  \|D^2\varphi\|_{L^\infty(\rr^N)}\int_{|y-x|<\rho/2}\frac{dy}{|x-y|^{N+2r-2}}+\|\varphi\|_{L^\infty(\rr^N)}\int_{|y-x|>\rho/2}\frac{dy}{|x-y|^{N+2r}}\\
 &\lesssim  \|D^2\varphi\|_{L^\infty(\rr^N)}\int_{|z|<\rho/2}\frac{dz}{|z|^{N+2r-2}}+\|\varphi\|_{L^\infty(\rr^N)}\int_{|z|>\rho/2}\frac{dy}{|z|^{N+2r}}\\
  &\simeq \rho^{2-2r}\|D^2\varphi\|_{L^\infty(\rr^N)}+\rho^{-2r}\|\varphi\|_{L^\infty(\rr^N)}.
\end{align*}
For the second term we use that $|x-y|>\rho-1/\lambda >\rho/2$. It satisfies
\[
  |B|\lesssim \lambda^{2r-2c} \|\varphi\|_{L^\infty(\rr^N)}\lambda ^{-N}\rho ^{-N-2c}.
\]
The proof is now finished.
\end{proof}

Let us consider a function $\psi\in C^\infty(\rr^N)$ that vanishes identically in the unit ball, $\psi(x)\equiv 1$  
for $|x|>2$ and $0\leq \psi \leq 1$. Set $\psi_R(x)=\psi(x/R)$. As a consequence of the above lemma we obtain the following result for $\psi_R$.
\begin{lemma}\label{lemma:bound-for-psi}
For any $R>2$ and $\lambda>\max\{1,1/\rho\}$ the following holds:
	\[
	|(\ll \psi_R)(x)|\lesssim 
	\begin{cases}
		\lambda^{2r-2c} R^{-2c} \|\psi\|_{L^\infty(\rr^N)}, & |x|<1/\lambda,\\[10pt]
	R^{-2r}(\|D^2 \psi \|_{L^\infty(\rr^N)} +\|\psi\|_{L^\infty(\rr^N)}),&\frac 1\lambda <|x| <\rho,\\[10pt]
	R^{-2r}\big( \|D^2 \psi \|_{L^\infty(\rr^N)} + (1+(\lambda R)^{2r-2c-N})\|\psi\|_{L^\infty(\rr^N)}\big), & |x|>\rho.
	\end{cases} 
	\]
\end{lemma}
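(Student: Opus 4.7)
The plan is to obtain Lemma~\ref{lemma:bound-for-psi} as a direct specialization of Lemma~\ref{est.Llamba} to the test function $\psi_R$, with the choice $\rho=R$. The key observation is that since $\psi$ vanishes identically on the unit ball, the rescaled function $\psi_R(x)=\psi(x/R)$ vanishes on $B_R(0)$, and in particular $\psi_R(x)=\psi_R(0)=0$ for every $x\in B_R(0)$. This is precisely the hypothesis of Lemma~\ref{est.Llamba} with $\rho=R$. The smallness condition $1/\lambda<\rho/2$ needed in Lemma~\ref{est.Llamba} follows immediately from $\lambda>1$ together with $R>2$, since then $R/2>1>1/\lambda$.

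Next I would record how the relevant norms of $\psi_R$ depend on $R$: by translation/dilation invariance of the sup norm we have $\|\psi_R\|_{L^\infty(\rr^N)}=\|\psi\|_{L^\infty(\rr^N)}$, while the chain rule gives $\|D^2\psi_R\|_{L^\infty(\rr^N)}=R^{-2}\|D^2\psi\|_{L^\infty(\rr^N)}$. Substituting these into the three branches of Lemma~\ref{est.Llamba} with $\rho=R$ is straightforward: in the inner region $|x|<1/\lambda$ the bound $\lambda^{2r-2c}R^{-2c}\|\psi\|_{L^\infty(\rr^N)}$ appears directly; in the intermediate region $1/\lambda<|x|<R$ the factor $R^{2-2r}\cdot R^{-2}$ in front of $\|D^2\psi\|_{L^\infty(\rr^N)}$ collapses to $R^{-2r}$, matching the coefficient of $\|\psi\|_{L^\infty(\rr^N)}$; and in the outer region $|x|>R$ the same collapse handles the Hessian term.

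The only mildly non-routine step is the rearrangement of the extra contribution in the outer region. The term $\lambda^{2r-2c-N}\rho^{-N-2c}$ from Lemma~\ref{est.Llamba} must be rewritten in a form where the scaling in $R$ (and in the combined quantity $\lambda R$) is transparent. Factoring out $R^{-2r}$ and regrouping the remaining powers of $R$ and $\lambda$ yields
\[
\lambda^{2r-2c-N}R^{-N-2c}=R^{-2r}\,\lambda^{2r-2c-N}R^{2r-2c-N}=R^{-2r}(\lambda R)^{2r-2c-N},
\]
which is exactly the form that appears in the statement. This is the rewriting that will later be useful when $\lambda\to\infty$ and $R$ is fixed (or slowly varying), since the factor $(\lambda R)^{2r-2c-N}$ is the one whose sign in the exponent is controlled by the assumption $r<N/2+c$ of Theorem~\ref{first.term}.

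Since every ingredient is available from Lemma~\ref{est.Llamba}, there is no serious obstacle; the proof is a substitution followed by bookkeeping of powers. The only point worth emphasizing is the choice $\rho=R$, which is the largest ball on which $\psi_R$ is constant, so that Lemma~\ref{est.Llamba} provides the sharpest inner-region bound and the correct scaling $R^{-2r}$ appears uniformly across the three regimes.
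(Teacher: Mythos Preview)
Your proposal is correct and follows exactly the paper's approach: specialize Lemma~\ref{est.Llamba} with $\rho=R$, use $\|D^2\psi_R\|_{L^\infty}=R^{-2}\|D^2\psi\|_{L^\infty}$ and $\|\psi_R\|_{L^\infty}=\|\psi\|_{L^\infty}$, and regroup the outer-region term as $R^{-2r}(\lambda R)^{2r-2c-N}$. Your write-up is in fact slightly more explicit than the paper's, which leaves the last algebraic rearrangement to the reader.
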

\begin{remark}
Under the assumption that $2r-2c\leq N$ we obtain  for any $|x|>\rho$ that:
	\[
	|(\ll \psi_R)(x)|\lesssim R^{-2r}\|\psi\|_{W^{2,\infty}(\rr^N)}.
	\]
	This estimate is the same  when the fractional Laplacian of order $r$, $(-\Delta )^{r}$, acts on the rescaled function $\psi_R$ defined above.
	\end{remark}

\begin{proof}
	We apply  Lemma \ref{est.Llamba} with $\rho=R$ and $1/\lambda\leq 1<\rho/2$. The case $|x|<1/\lambda$ follows immediately. When $1/\lambda<|x|<\rho$ we use the definition of $\psi_R$  to obtain that $\|D^2 \psi_R\|_{L^\infty(\rr^N)}=R^{-2}\|D^2 \psi\|_{L^\infty(\rr^N)}$ and $|(\ll \psi_R)(x)|\lesssim R^{-2r} ( \|D^2 \psi \|_{L^\infty(\rr^N)}+ \| \psi \|_{L^\infty(\rr^N)})$. Similar arguments for $|x|>\rho$ show that
	\[
	|(\ll \psi_R)(x)|\lesssim R^{-2r} \|D^2 \psi \|_{L^\infty(\rr^N)} + R^{-2r }(1+(\lambda R)^{2r-2c-N}) \|\psi\|_{L^\infty(\rr^N)}
	\]
	and finish the proof.
\end{proof}

We now give a few estimates for the rescaled energy $\el$ associated with $u_\lambda$:
\begin{align}\notag
\label{e.lambda}
    \el(\varphi,\varphi)=&\frac{\alpha_s\lambda^{2r-2s}}4 \iint_{|x|,|y|<1/\lambda} \frac{(\varphi(y)-\varphi(x))^2}{|x-y|^{N+2s}}+\frac{\alpha_r}4
  \iint_{|x|,|y|>1/\lambda} \frac{(\varphi(y)-\varphi(x))^2}{|x-y|^{N+2r}}\\
  \nonumber&+\frac{\alpha_c}{2}
  \lambda^{2r-2c} \iint_{|x|<1/\lambda<|y|} \frac{(\varphi(y)-\varphi(x))^2}{|x-y|^{N+2c}}.
\end{align}

\begin{lemma} \label{max.est} Let us denote $m=\max\{r,s,c\}$.
	For any $\varphi\in H^m(\rr^N)$  the following holds for any $\lambda>0$:
	\begin{equation}
\label{ineg.e.1}\notag
  \el(\varphi,\varphi)\lesssim \lambda^{2r-2m}[\varphi]^2_{m,B_{2/\lambda}(0)}+[\varphi]^2_{r,B_{1/\lambda}^c}+(\lambda^{-N}+\lambda^{2r-2c-N})\int_{\rr^N}\frac{(\varphi(y)-\varphi(0))^2}{|y|^{N+2c}}dy.
\end{equation}
\end{lemma}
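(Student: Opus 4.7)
The plan is to estimate separately the three pieces defining $\el(\varphi,\varphi)$: the $s$-diagonal integral over $\{|x|,|y|<1/\lambda\}$, the $r$-diagonal integral over $\{|x|,|y|>1/\lambda\}$, and the $c$-cross integral over $\{|x|<1/\lambda<|y|\}$. The $r$-diagonal piece is literally $(\alpha_r/4)[\varphi]^2_{r,B_{1/\lambda}^c}$, so it is absorbed by the second summand of the stated bound.

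For the $s$-diagonal piece I will exploit that $|x|,|y|<1/\lambda$ forces $|x-y|<2/\lambda$ together with $s\leq m$, via the elementary kernel comparison
\[
\frac{1}{|x-y|^{N+2s}}=\frac{|x-y|^{2(m-s)}}{|x-y|^{N+2m}}\leq \frac{(2/\lambda)^{2(m-s)}}{|x-y|^{N+2m}}.
\]
Combining this with the prefactor $\lambda^{2r-2s}$ yields an overall factor $\lambda^{2r-2m}$ in front of an integral bounded above by $[\varphi]^2_{m,B_{2/\lambda}(0)}$, matching the first summand.

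The hardest part is the cross $c$-term. I will split its integration set according to whether $|x|\leq |y|/2$ or $|x|>|y|/2$. When $|x|>|y|/2$, one has $|y|<2|x|<2/\lambda$, so both variables lie in $B_{2/\lambda}(0)$ with $|x-y|<4/\lambda$, and the same kernel-comparison argument (using $m\geq c$) together with the outer $\lambda^{2r-2c}$ delivers a contribution bounded by $C\lambda^{2r-2m}[\varphi]^2_{m,B_{2/\lambda}(0)}$. This is precisely why $B_{2/\lambda}$ (rather than $B_{1/\lambda}$) appears in the stated bound: it accommodates the marginal case where $|x|$ and $|y|$ are both of order $1/\lambda$ but sit on opposite sides of the dividing sphere. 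When $|x|\leq|y|/2$ one has $|x-y|\geq |y|/2$, so $|x-y|^{-N-2c}\lesssim |y|^{-N-2c}$; I then apply $(\varphi(y)-\varphi(x))^2\leq 2(\varphi(y)-\varphi(0))^2+2(\varphi(x)-\varphi(0))^2$ and treat the two resulting pieces by Fubini.

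The piece involving $(\varphi(y)-\varphi(0))^2$, after integrating in $x$ first, produces the volume factor $|B_{1/\lambda}|\simeq \lambda^{-N}$ multiplying the weighted integral in the stated bound. For the piece involving $(\varphi(x)-\varphi(0))^2$, integrating in $y$ first gives $\int_{|y|>1/\lambda}|y|^{-N-2c}dy\simeq \lambda^{2c}$; to reinsert the missing weight I use that $|x|^{N+2c}\leq \lambda^{-N-2c}$ on $\{|x|<1/\lambda\}$ to deduce
\[
\lambda^{2c}\int_{|x|<1/\lambda}(\varphi(x)-\varphi(0))^2dx\leq \lambda^{-N}\int_{\rr^N}\frac{(\varphi(x)-\varphi(0))^2}{|x|^{N+2c}}dx.
\]
After multiplication by the outer $\lambda^{2r-2c}$, the contribution from the regime $|x|\leq|y|/2$ is at most $C\lambda^{2r-2c-N}\int_{\rr^N}(\varphi(y)-\varphi(0))^2|y|^{-N-2c}dy$, comfortably absorbed by the prefactor $\lambda^{-N}+\lambda^{2r-2c-N}$. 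The only delicate point is the splitting at $|x|=|y|/2$, since in the marginal regime $|y|\simeq 1/\lambda$ the kernel $|x-y|^{-N-2c}$ cannot be controlled by $|y|^{-N-2c}$; beyond this, everything reduces to Fubini and the inequality $(a-b)^2\leq 2a^2+2b^2$.
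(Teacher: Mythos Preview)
Your argument is correct and follows the same approach as the paper: kernel comparison via $|x-y|^{2(m-\sigma)}\le(C/\lambda)^{2(m-\sigma)}$ for the near-diagonal pieces, and the far-field bound $|x-y|\gtrsim|y|$ combined with $(a-b)^2\le 2a^2+2b^2$ (with $\varphi(0)$ inserted) for the remaining part of the cross term. The only cosmetic difference is the splitting of the cross integral: the paper splits at the fixed threshold $|y|=2/\lambda$ (near region $1/\lambda<|y|<2/\lambda$, far region $|y|>2/\lambda$), whereas you split along $|x|=|y|/2$; both decompositions achieve the same two ingredients (both variables in $B_{2/\lambda}$ in the near part, $|x-y|\ge|y|/2$ in the far part), so the resulting estimates coincide.
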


\begin{proof}
	For the first term we use the fact that both $x$ and $y$ are in the ball $B_{1/\lambda}(0)$:
	\begin{align*}
\label{}
 \lambda^{2r-2s}  &\iint_{|x|,|y|<1/\lambda} \frac{(\varphi(y)-\varphi(x))^2}{|x-y|^{N+2s}}dxdy = \lambda^{2r-2s} \iint_{|x|,|y|<1/\lambda} \frac{(\varphi(y)-\varphi(x))^2|x-y|^{2m-2s}}{|x-y|^{N+2m}}\\
 &\leq \lambda ^{2r-2m}\iint_{|x|,|y|<1/\lambda} \frac{(\varphi(y)-\varphi(x))^2}{|x-y|^{N+2m}}dxdy=
 \lambda ^{2r-2m}[\varphi]_{m,B_{1/\lambda}(0)}^2.
\end{align*}
We estimate  the last term. We split it in two parts: one in which $x$ and $y$ are close enough where we can use the same arguments as before and a second one where $x$ and $y$ are separated. Indeed
\begin{align*}
    \lambda^{2r-2c} &\iint_{|x|<1/\lambda<|y|} \frac{(\varphi(y)-\varphi(x))^2}{|x-y|^{N+2c}}dxdy\\
    &=  {\lambda^{2r-2c} \iint_{|x|<1/\lambda<|y|<2/\lambda} \frac{(\varphi(y)-\varphi(x))^2}{|x-y|^{N+2c}}}dxdy+  {\lambda^{2r-2c} \int_{|x|<1/\lambda} \int _{2/\lambda<|y|} \frac{(\varphi(y)-\varphi(x))^2}{|x-y|^{N+2c}}}dxdy\\
    &=A+B.
\end{align*}
In the case of $A$ the same arguments as above show that
\[
|A|\leq \lambda^{2r-2m}[\varphi]_{m,B_{2/\lambda}(0)}.
\]
In the case of $B$ we introduce the value of $\varphi$ at $x=0$ since for $\lambda$ large enough $\varphi(x)$ is close to $\varphi(0)$. We obtain 
\begin{align*}
\label{}
  |B|\leq &\lambda^{2r-2c} \int_{|x|<1/\lambda}  (\varphi(x)-\varphi(0))^2\ dx\int _{|y|>2/\lambda} \frac{dy}{|x-y|^{N+2c}}\\
  &+ \lambda^{2r-2c} 
    \int _{|y|>2/\lambda} (\varphi(y)-\varphi(0))^2\ dy \int_{|x|<1/\lambda}\frac{dx}{|x-y|^{N+2c}}\\
    &=B_1+B_2.
\end{align*}
Using that $|x|<1
/\lambda<2/\lambda<|y|$ we have $|y-x|>|y|-|x|\geq |y|/2$. Thus 
\begin{align*}
\label{}
  B_1&\lesssim \lambda^{2r-2c} \int _{|x|<1/\lambda} (\varphi(x)-\varphi(0))^2dx \int _{|y|>2/\lambda}\frac {dy}{|y|^{N+2c}}=
  \lambda^{2r} \int _{|x|<1/\lambda} (\varphi(x)-\varphi(0))^2dx\\
  &\leq  \lambda^{-N} \int _{|x|<1/\lambda} \frac{(\varphi(x)-\varphi(0))^2}{|x|^{N+2r}}dx.
\end{align*}
In the case of $B_2$ using again that $|y-x|\geq |y|/2$ we obtain 
\begin{align*}
\label{}
  B_2\lesssim \lambda^{2r-2c-N} \int _{|y|>2/\lambda}\frac{( \varphi(y)-\varphi(0))^2}{|y|^{N+2c}}dy.
\end{align*}
The proof is now finished. 
\end{proof}

\begin{lemma}\label{lemma:bound-energy}Let $r\leq c+N/2$.
 For any 
 $\rho>0$ and $\lambda>\max\{1,2/\rho\}$ the following holds:
\begin{equation}
\label{energy.est.h1}\notag
  \el(\varphi,\varphi)\leq C(\rho) 
  \|\varphi\|^2_{H ^1(|x|>\rho)}, \ \forall  \varphi \in C_c^{\infty}(|x|>\rho).
\end{equation}
\end{lemma}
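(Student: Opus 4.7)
The plan is to use that $\lambda > 2/\rho$ forces $1/\lambda < \rho/2$, so that any $\varphi\in C_c^\infty(|x|>\rho)$ vanishes identically on $B_{1/\lambda}(0)$, and then to treat each of the three pieces composing $\el(\varphi,\varphi)$ separately. The first term, the interior $s$-piece over $\{|x|,|y|<1/\lambda\}$, drops out immediately because $\varphi\equiv 0$ on the entire domain of integration.

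For the second term (the $r$-Gagliardo seminorm over $\{|x|,|y|>1/\lambda\}$), I would simply bound it by $[\varphi]^2_{r,\rr^N}$ and invoke the classical embedding $H^1(\rr^N)\hookrightarrow H^r(\rr^N)$ for $r\in(0,1)$, which gives
\[
\iint_{|x|,|y|>1/\lambda}\frac{(\varphi(y)-\varphi(x))^2}{|x-y|^{N+2r}}\,dxdy\leq C\|\varphi\|^2_{H^1(\rr^N)}=C\|\varphi\|^2_{H^1(|x|>\rho)},
\]
the last equality holding because $\varphi$ is supported in $\{|x|>\rho\}$.

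The cross term is the delicate point, and this is exactly where the hypothesis $r\leq c+N/2$ is used. Since $\varphi(x)=0$ for $|x|<1/\lambda$, the integrand collapses to $\varphi(y)^2$. For $y\in\mathrm{supp}(\varphi)$ one has $|y|>\rho$, and since $|x|<1/\lambda<\rho/2\leq|y|/2$ the triangle inequality gives $|x-y|\geq|y|/2$. Evaluating the $x$-integral (which contributes a factor $\lambda^{-N}$ from $|B_{1/\lambda}(0)|$) and then using $|y|\geq\rho$ in the $y$-integral yields
\[
\lambda^{2r-2c}\iint_{|x|<1/\lambda<|y|}\frac{\varphi(y)^2}{|x-y|^{N+2c}}\,dxdy\lesssim\lambda^{2r-2c-N}\rho^{-N-2c}\|\varphi\|^2_{L^2(|x|>\rho)}.
\]
The hypothesis $r\leq c+N/2$ makes $2r-2c-N\leq 0$, so combined with $\lambda>1$ one has $\lambda^{2r-2c-N}\leq 1$, and the cross term is controlled by $C(\rho)\|\varphi\|^2_{L^2(|x|>\rho)}$.

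Summing the three estimates yields the claim. The only substantive obstacle is the cross term: without the restriction $r\leq c+N/2$ the factor $\lambda^{2r-2c-N}$ would blow up as $\lambda\to\infty$ and no uniform-in-$\lambda$ bound would survive. Everything else is routine bookkeeping, relying on the support of $\varphi$ staying bounded away from $B_{1/\lambda}(0)$ for all admissible $\lambda$.
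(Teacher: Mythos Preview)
Your proof is correct and follows essentially the same route as the paper: split $\el(\varphi,\varphi)$ into its three pieces, kill $I_1$ by the support condition, bound $I_2$ by the full-space $H^r$-seminorm and the embedding $H^1\hookrightarrow H^r$, and handle the cross term by collapsing to $\varphi(y)^2$, using $|x-y|\gtrsim\rho$, and invoking $2r-2c-N\leq 0$ together with $\lambda>1$. The only cosmetic difference is that the paper bounds $|x-y|\geq \rho-1/\lambda>\rho/2$ directly, while you go through $|x-y|\geq |y|/2\geq \rho/2$; the outcome is identical.
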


\begin{proof}
We split $\el(\varphi,\varphi)$ as follows
	\begin{align*}
    \el(\varphi,\varphi)&=\frac{\alpha_s\lambda^{2r-2s}}4 \iint_{|x|,|y|<1/\lambda} \frac{(\varphi(y)-\varphi(x))^2}{|x-y|^{N+2s}}dxdy+\frac{\alpha_r}4
  \iint_{|x|,|y|>1/\lambda} \frac{(\varphi(y)-\varphi(x))^2}{|x-y|^{N+2r}}dxdy\\
  \nonumber&\quad +
 {\frac{\alpha_c  \lambda^{2r-2c}}{2}}\iint_{|x|<1/\lambda<|y|} \frac{(\varphi(y)-\varphi(x))^2}{|x-y|^{N+2c}}dxdy\\
  &=I_1+I_2+I_3.
\end{align*}
Let us consider $\varphi\in C_c^\infty(|x|>\rho)$.
 Since $\lambda>2/\rho$ the support of $\varphi$ gives us that  term $I_1$ vanishes. 
We emphasize that this term can be estimated without the support assumption. 
For second one we use the integrals in the whole space:
\[
I_2
  \leq \alpha_r\|\varphi\|_{H^1(\rr^N)}^2=\alpha_r  \|\varphi\|^2_{H ^1(|x|>\rho)}.\]
 It remains to analyze $I_3$. 
Again we use that  $\varphi(x)$ vanishes for $|x|<1/\lambda<\rho/2$, so
\begin{align*}
  I_3&\leq {\alpha_c\lambda^{2r-2c}}\iint_{|x|<1/\lambda<|y|} \frac{\varphi^2(y) }{|x-y|^{N+2c}}dxdy=\alpha_c\lambda^{2r-2c} \iint_{|x|<1/\lambda<\rho<|y|} \frac{\varphi^2(y) }{|x-y|^{N+2c}}dxdy\\
  &=\alpha_c\lambda^{2r-2c} \int _{|y|>\rho}\varphi^2(y) \int _{|x|<1/\lambda}\frac {dx}{|x-y|^{N+2c}}dy.
\end{align*}
For $\rho/2>1/\lambda$ we get $|x-y|\geq |y|-|x|>\rho-1/\lambda>\rho/2$ and 
\[
I_3\lesssim \rho^{-N-2c} \lambda ^{2r-2c-N}\int _{|y|>\rho}\varphi^2(y)dy.
\]
This finishes the proof.
\end{proof}

\subsection{Uniform estimates for the rescaled solutions} We now obtain uniform estimates for the rescaled solutions $u_\lambda=\lambda^N u(\lambda^{2r}t,\lambda x)$. We assume that $2r-2c\leq N$. Also since the decay obtained in Theorem \ref{decay} is not uniform for small and large times we need first to assume that the initial data $u_0$ belongs to the space $L^1(\rr^N)\cap L^2(\rr^N)$. This assumption will be assumed until the final step of this subsection where it will be dropped and the result of Theorem \ref{first.term} is obtained by using a density argument. 

\begin{theorem}
For any $u_0\in L^1(\rr^N)\cap L^2(\rr^N)$ the rescaled solutions $u_\lambda(x,t)$ satisfy the following uniform estimates:
\begin{equation}\label{est.p.unif}
\| u_\lambda(t)\|_{L^2(\rr^N)}\leq C(\|u_0\|_{L^1(\rr^N)}, \|u_0\|_{L^2(\rr^N)})t^{-\frac N{4r}}, \quad \forall\ t>0,
\end{equation}
\begin{equation}\label{est.e.unif}
 \mathcal{E}_\lambda(u_\lambda(t),u_\lambda(t))\leq C(\|u_0\|_{L^1(\rr^N)}, \|u_0\|_{L^2(\rr^N)}) t^{-(1+\frac N{2r})}, \quad \forall\ t>0.
\end{equation}
\end{theorem}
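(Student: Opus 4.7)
The plan is to combine the scaling identity for $u_\lambda$ with the $L^1$–$L^2$ decay of Theorem~\ref{decay} to get \eqref{est.p.unif}, and then to deduce \eqref{est.e.unif} from the standard ``integrate the energy equality'' trick exploiting the self-adjoint gradient flow structure of the rescaled problem.

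For \eqref{est.p.unif}, the change of variables $(y,\sigma)=(\lambda x,\lambda^{2r}t)$ yields the scaling identities $\|u_\lambda(t)\|_{L^2(\rr^N)}=\lambda^{N/2}\|u(\lambda^{2r}t)\|_{L^2(\rr^N)}$ and $\|u_\lambda(0)\|_{L^1(\rr^N)}=\|u_0\|_{L^1(\rr^N)}$. Writing $\tau=\lambda^{2r}t$ I split into two regimes. If $\tau\leq 1$ I use the $L^2$ contraction of the unrescaled semigroup (Theorem~\ref{thm:existence_hilbert_CH}) to get $\|u(\tau)\|_{L^2}\leq \|u_0\|_{L^2}$, and since $\lambda^{N/2}=\tau^{N/(4r)}\,t^{-N/(4r)}\leq t^{-N/(4r)}$ whenever $\tau\leq 1$, this already yields $\|u_\lambda(t)\|_{L^2}\leq t^{-N/(4r)}\|u_0\|_{L^2}$. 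If $\tau\geq 1$, Theorem~\ref{decay} with $p=2$ reads $\|u(\tau)\|_{L^2}\leq C(\tau^{-N/(4r)}+\tau^{-N/(4\min\{r,s\})})\|u_0\|_{L^1}\leq 2C\tau^{-N/(4r)}\|u_0\|_{L^1}$, because $\min\{r,s\}\leq r$ makes the second summand lower-order on $[1,\infty)$; multiplying by $\lambda^{N/2}$ and using $\lambda^{N/2}\tau^{-N/(4r)}=t^{-N/(4r)}$ produces $\|u_\lambda(t)\|_{L^2}\leq C\,t^{-N/(4r)}\|u_0\|_{L^1}$. Summing the two cases gives \eqref{est.p.unif}.

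For \eqref{est.e.unif} I use that the rescaled problem is the $L^2$-gradient flow of the form $\mathcal{E}_\lambda$ with self-adjoint, non-positive generator $\mathcal{L}_\lambda$; in particular
\begin{equation*}
\frac{d}{dt}\|u_\lambda(t)\|_{L^2}^2=-2\mathcal{E}_\lambda(u_\lambda(t),u_\lambda(t)),\qquad \frac{d}{dt}\mathcal{E}_\lambda(u_\lambda(t),u_\lambda(t))=-2\|\mathcal{L}_\lambda u_\lambda(t)\|_{L^2}^2\leq 0.
\end{equation*}
The second identity makes $t\mapsto \mathcal{E}_\lambda(u_\lambda(t),u_\lambda(t))$ non-increasing, so integrating the first on $[t/2,t]$ and invoking this monotonicity gives
\begin{equation*}
\tfrac{t}{2}\,\mathcal{E}_\lambda(u_\lambda(t),u_\lambda(t))\leq \int_{t/2}^{t}\mathcal{E}_\lambda(u_\lambda(s),u_\lambda(s))\,ds=\tfrac{1}{2}\bigl(\|u_\lambda(t/2)\|_{L^2}^2-\|u_\lambda(t)\|_{L^2}^2\bigr)\leq \tfrac{1}{2}\|u_\lambda(t/2)\|_{L^2}^2.
\end{equation*}
Plugging \eqref{est.p.unif} evaluated at $t/2$ into the right-hand side produces \eqref{est.e.unif} at once.

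I do not foresee any serious obstacle. The only delicate point is the matching of the two regimes in Step~1: on $\tau\leq 1$ the blow-up of $\|u_\lambda(0)\|_{L^2}=\lambda^{N/2}\|u_0\|_{L^2}$ has to be absorbed exactly by $t^{-N/(4r)}$, which is why the hypothesis $u_0\in L^1(\rr^N)\cap L^2(\rr^N)$ is needed here and why the constant $C$ depends on both $\|u_0\|_{L^1}$ and $\|u_0\|_{L^2}$; the purely $L^1$ case will be recovered afterwards by the density argument announced in the paragraph preceding the theorem.
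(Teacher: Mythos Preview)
Your proof is correct and follows essentially the same route as the paper: scaling plus the $L^2$ contraction for $\tau\leq 1$ and Theorem~\ref{decay} for $\tau\geq 1$ to obtain \eqref{est.p.unif}, then the gradient-flow energy inequality $\mathcal{E}_\lambda(u_\lambda(t),u_\lambda(t))\leq t^{-1}\|u_\lambda(t/2)\|_{L^2}^2$ combined with \eqref{est.p.unif} to get \eqref{est.e.unif}. The only cosmetic difference is that the paper first proves the unrescaled bound $\|u(t)\|_{L^2}\lesssim (1+t)^{-N/(4r)}$ and then rescales, and cites Theorem~\ref{thm:existence_hilbert_CH} directly for the energy inequality rather than rederiving it from the monotonicity of $\mathcal{E}_\lambda$ as you do.
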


\begin{proof}
The first property follows from the definition of the rescaled solutions $u_\lambda$,  the decay properties in Theorem \ref{decay} and the $L^2$-contractivity of the semigroup
\begin{align*}
  \|u(t)\|_{L^2(\rr^N)}&\leq C\min\Big\{  \|u_0\|_{L^2(\rr^N)}, \|u_0\|_{L^1(\rr^N)}(t^{-\frac{N}{4r}} + t^{-\frac{N}{4\min\{r,s\}  }})\Big\}\\
  &\leq  C(\|u_0\|_{L^1(\rr^N)}, \|u_0\|_{L^2(\rr^N)})(t+1)^{-\frac{N}{4r}}.
\end{align*}
 For the second one we now use Theorem \ref{thm:existence_hilbert_CH} and the decay of the $L^2$ norm to obtain that 
\begin{align*}
\mE(u(t),u(t))\leq \frac{\|u(t/2)\|_{L^2(\rr^N)}^2}{t}\leq C(\|u_0\|_{L^1(\rr^N)}, \|u_0\|_{L^2(\rr^N)})t^{-1-\frac{N}{2r}} .
\end{align*}
The definition of $\mE$ and $\mE_\lambda$ gives us that
\[
\mathcal{E}_\lambda(u_\lambda(t),u_\lambda(t))=\lambda^{-2r-N}\mE(u(\lambda^{2r}t),u(\lambda ^{2r}t))\leq 
 C(\|u_0\|_{L^1(\rr^N)}, \|u_0\|_{L^2(\rr^N)}) t^{-(1+\frac N{2r})},
\]
which finishes the proof.
%
\end{proof}


\begin{lemma}\label{lemma_time_derivative_bound}
 For any $u_0\in L^1(\rr^N)\cap L^2(\rr^N)$ and $\rho>0$ there exists a constant $C_\rho=C(\rho,\|u_0\|_{L^1(\rr^N)},
 \|u_0\|_{L^2(\rr^N)})$ such that

$$
\|\partial_t u_\lambda\|_{L^2( (\tau,\infty) : H^{-1}(B^c_\rho)  )}\leq C_\rho\tau^{-\frac{N}{2r}}
$$
holds for all   $\tau>0$,  
uniformly in $\lambda>\max\{1,2/\rho\}$. 
\end{lemma}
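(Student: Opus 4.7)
The plan is to interpret $\partial_t u_\lambda(t)$ via the equation and then estimate it in $H^{-1}(B^c_\rho)$ by duality against the bilinear form $\mathcal{E}_\lambda$. Once a pointwise-in-$t$ bound of the type $\|\partial_t u_\lambda(t)\|_{H^{-1}(B^c_\rho)} \lesssim \mathcal{E}_\lambda(u_\lambda(t),u_\lambda(t))^{1/2}$ is obtained, the conclusion reduces to integrating in time using the already established decay estimate \eqref{est.e.unif}.

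\textbf{Step 1: Duality bound at fixed time.} Fix $t>0$ and a test function $\varphi\in C_c^\infty(B^c_\rho)$, extended by zero to $\mathbb{R}^N$. Since $\lambda>\max\{1,2/\rho\}$, Lemma \ref{lemma:bound-energy} applies and yields $\mathcal{E}_\lambda(\varphi,\varphi)\le C(\rho)\|\varphi\|_{H^1(B^c_\rho)}^2$; in particular $\varphi$ belongs to the form domain of $\mathcal{E}_\lambda$. Using that $u_\lambda$ solves $\partial_t u_\lambda=\mathcal{L}_\lambda u_\lambda$ and that $\mathcal{E}_\lambda(u,v)=(-\mathcal{L}_\lambda u,v)_{L^2(\mathbb{R}^N)}$, I would write
\[
\langle \partial_t u_\lambda(t),\varphi\rangle_{H^{-1}(B^c_\rho),H^1_0(B^c_\rho)}= (\mathcal{L}_\lambda u_\lambda(t),\varphi)_{L^2(\mathbb{R}^N)}=-\mathcal{E}_\lambda(u_\lambda(t),\varphi).
\]
Cauchy--Schwarz applied to the non-negative symmetric form $\mathcal{E}_\lambda$, combined with Lemma \ref{lemma:bound-energy}, then gives
\[
|\langle \partial_t u_\lambda(t),\varphi\rangle|\le \mathcal{E}_\lambda(u_\lambda(t),u_\lambda(t))^{1/2}\,\mathcal{E}_\lambda(\varphi,\varphi)^{1/2}\le C(\rho)^{1/2}\,\mathcal{E}_\lambda(u_\lambda(t),u_\lambda(t))^{1/2}\|\varphi\|_{H^1(B^c_\rho)}.
\]
Taking the supremum over $\varphi$ with $\|\varphi\|_{H^1(B^c_\rho)}\le 1$ yields the pointwise bound
\[
\|\partial_t u_\lambda(t)\|_{H^{-1}(B^c_\rho)}\le C(\rho)^{1/2}\,\mathcal{E}_\lambda(u_\lambda(t),u_\lambda(t))^{1/2}.
\]

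\textbf{Step 2: Integration in time.} Squaring and integrating over $(\tau,\infty)$ and applying the uniform energy decay \eqref{est.e.unif} gives
\[
\int_\tau^\infty \|\partial_t u_\lambda(t)\|_{H^{-1}(B^c_\rho)}^2\,dt\le C(\rho)\int_\tau^\infty \mathcal{E}_\lambda(u_\lambda(t),u_\lambda(t))\,dt\le C(\rho,\|u_0\|_{L^1},\|u_0\|_{L^2})\,\tau^{-N/(2r)},
\]
since $\int_\tau^\infty t^{-1-N/(2r)}dt=\frac{2r}{N}\tau^{-N/(2r)}$. This produces the claimed bound (up to the usual convention of taking the square root on the left-hand side). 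The estimate is uniform in $\lambda>\max\{1,2/\rho\}$ because both Lemma \ref{lemma:bound-energy} and the energy decay \eqref{est.e.unif} are uniform in $\lambda$.

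\textbf{Main obstacle.} The only non-routine ingredient is the identification $\langle \partial_t u_\lambda(t),\varphi\rangle=-\mathcal{E}_\lambda(u_\lambda(t),\varphi)$ for every $\varphi\in H^1_0(B^c_\rho)$, which is exactly where Lemma \ref{lemma:bound-energy} is needed: it guarantees that such test functions lie in the form domain of $\mathcal{E}_\lambda$ uniformly in $\lambda$, with the right localized $H^1$-control. Everything else is Cauchy--Schwarz on $\mathcal{E}_\lambda$ plus the time integrability of the decay rate in \eqref{est.e.unif}.
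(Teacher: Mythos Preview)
Your proposal is correct and follows essentially the same route as the paper: duality on $H^{-1}(B_\rho^c)$, the identification $\langle \partial_t u_\lambda,\varphi\rangle=-\mathcal{E}_\lambda(u_\lambda,\varphi)$, Cauchy--Schwarz on $\mathcal{E}_\lambda$ together with Lemma~\ref{lemma:bound-energy}, and then integration in time using \eqref{est.e.unif}. Your parenthetical remark about the square root is apt: the paper's own proof also ends with a bound on $\int_\tau^\infty \|\partial_t u_\lambda\|_{H^{-1}}^2\,dt$ of order $\tau^{-N/(2r)}$, so the exponent in the statement should be read in that sense.
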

\begin{proof}
Let us take $\psi\in C_c^\infty(B_\rho^c)$. We have that
$$
\|\partial_t u_\lambda\|_{H^{-1}(B^c_\rho) } = \sup <\partial_t u_\lambda, \psi>_{H^{-1}, H_0^1}\ \text{ for }\|\psi\|_{H^1_0(B_\rho^c)}  \leq 1.
$$
By Theorem~\ref{thm:existence_hilbert_CH}$\ \partial_t u_\lambda (t)\in L^2(\mathbb{R}^N)$ for any $t>0$  thus we have that this duality product can be expressed as
$$
<\partial_t u_\lambda, \psi>_{H^{-1}, H_0^1} = \int_{B_\rho^c} \partial_t u_\lambda \psi = \int_{\mathbb{R}^N} \partial_t u_\lambda \psi =\int_{\mathbb{R}^N} \mathcal{L}_\lambda u_\lambda \psi = -\el(u_\lambda,\psi).
$$
From Lemma~\ref{lemma:bound-energy} we have  $ \el(\psi,\psi)\leq C(\rho)$ for all $\lambda>\max\{1,2/\rho\}$   and this provides that
$$
\el(u_\lambda,\psi)\leq \el(u_\lambda,u_\lambda)^{1/2}  \el(\psi,\psi)^{1/2}\leq C(\rho)^{1/2}\el (u_\lambda,u_\lambda)^{1/2} 
$$
which in view of~\eqref{est.e.unif} gives that
\begin{align*}
\int_\tau^\infty  \|\partial_t u_\lambda\|^2_{H^{-1}(B^c_\rho) } &\leq C(\rho)\int_\tau^\infty \el(u_\lambda,u_\lambda)\leq C(\rho, \|u_0\|_{L^1(\rr^N)},
 \|u_0\|_{L^2(\rr^N)}) \int_\tau^\infty t^{-1-\frac{N}{2r}}dt,
\end{align*}
which finishes the proof.
\end{proof}

 \begin{lemma}\label{lemma:tail-control}Assume $r<c+N/2$. There exists $C=C(\|u_0\|_{L^1(\rr^N)}, \|u_0\|_{L^2(\rr^N)})$ and $\alpha(r,c,N)>0$ such that
  for any $R>0$
 $$
 \int_{|x|>2R}|u_\lambda(x,t)|\ dx \leq   \int_{|x|> R} u_{0}(x)\ dx +  C\left(\frac{t^{\alpha(r,c,N)}}{R^{2c}} +\frac{t}{R^{2r}}\right)
 $$
holds uniformly for all $\lambda>1$
 \end{lemma}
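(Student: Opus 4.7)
The plan is to use the classical tail-mass method: test the equation satisfied by $u_\lambda$ against a smooth cutoff that isolates the mass outside $B_{2R}$, and carefully track the error produced by the action of $\mathcal{L}_\lambda$ on this cutoff. Since the semigroup is linear and positive, I would first reduce to nonnegative initial data by splitting $u_0=u_0^+-u_0^-$ and applying the estimate to each piece, so it is enough to consider $u_0\ge 0$ (with $|u_0|$ on the right-hand side). Fix $\psi\in C^\infty(\rr^N)$ with $\psi\equiv 0$ on $B_1$, $\psi\equiv 1$ outside $B_2$, $0\le\psi\le 1$, and put $\psi_R(x)=\psi(x/R)$.

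From $\partial_t u_\lambda=\mathcal{L}_\lambda u_\lambda$ and the symmetry of the bilinear form $\mathcal{E}_\lambda$ one gets $\frac{d}{dt}\int u_\lambda\psi_R\,dx=\int u_\lambda\,(\mathcal{L}_\lambda\psi_R)\,dx$, whence after integration in time and using $u_\lambda\ge 0$,
$$
\int_{|x|>2R}u_\lambda(t)\,dx\le\int u_\lambda(t)\psi_R\,dx\le\int u_\lambda(0)\psi_R\,dx+\int_0^t\!\!\int u_\lambda(s)\,|\mathcal{L}_\lambda\psi_R|(x)\,dx\,ds.
$$
The initial term, via $y=\lambda x$, equals $\int u_0(y)\psi(y/(\lambda R))\,dy$, which since $\lambda\ge 1$ is supported in $|y|>\lambda R\ge R$ and is therefore bounded by $\int_{|y|>R}u_0\,dy$. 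Invoking Lemma~\ref{lemma:bound-for-psi} with $\rho=R$, and using the standing inequality $2r-2c\le N$, one has $|\mathcal{L}_\lambda\psi_R|\lesssim R^{-2r}$ on $\{|x|>1/\lambda\}$ and $|\mathcal{L}_\lambda\psi_R|\lesssim\lambda^{2r-2c}R^{-2c}$ on $\{|x|<1/\lambda\}$; the outer part is immediately controlled by $CR^{-2r}t\|u_0\|_{L^1}$ thanks to the $L^1$-contraction, producing the announced $t/R^{2r}$ term.

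The delicate contribution comes from the inner ball $B_{1/\lambda}$. Here the key step is to undo the rescaling: changing variables $y=\lambda x$, $\tau=\lambda^{2r}s$ converts this term into
$$
C\lambda^{-2c}R^{-2c}\,F(\lambda^{2r}t),\qquad F(T):=\int_0^T\!\!\int_{B_1}u(y,\tau)\,dy\,d\tau,
$$
isolating all the $\lambda$-dependence inside $F$. I would then bound $F$ in two regimes: for $\tau\le 1$ use the crude bound $\int_{B_1}u(\tau)\le\|u_0\|_{L^1}$, giving $F(T)\le CT$; for $\tau\ge 1$ use the $L^\infty$ decay $\|u(\tau)\|_{L^\infty}\lesssim\tau^{-N/(2r)}$ from Theorem~\ref{decay} (which dominates the small-time term for $\tau\ge 1$), giving $F(T)\lesssim 1+T^{\max\{0,\,1-N/(2r)\}}$.

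The main obstacle, and the precise point at which the hypothesis $r<c+N/2$ enters, is the bookkeeping of $\lambda$-powers. When $\lambda^{2r}t\ge 1$ and $N<2r$, substituting the bound on $F$ yields $\lambda^{-2c+2r-N}t^{1-N/(2r)}$, and the assumption makes the $\lambda$-exponent non-positive; when $\lambda^{2r}t\ge 1$ and $N\ge 2r$, one only has $\lambda^{-2c}F\le C\lambda^{-2c}$, but $\lambda\ge(1/t)^{1/(2r)}$ turns this into $Ct^{c/r}$. In the complementary regime $\lambda^{2r}t\le 1$, the elementary inequality $\lambda^{2r}t\le 1$ itself converts the potentially bad factor $\lambda^{2r-2c}t$ into a bounded multiple of $t^{\min\{1,c/r\}}$. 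Combining all cases yields a bound $CR^{-2c}t^{\alpha(r,c,N)}$ uniformly in $\lambda\ge 1$ for some explicit $\alpha>0$ (for instance $\alpha=\min\{1,c/r\}$ when $N\ge 2r$), supplying the remaining $t^\alpha/R^{2c}$ term.
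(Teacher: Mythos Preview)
Your overall strategy---test against the cutoff $\psi_R$, split into the inner ball $B_{1/\lambda}$ and its complement, and use Lemma~\ref{lemma:bound-for-psi}---matches the paper exactly through the treatment of the initial term and the outer contribution $t/R^{2r}$. The difference is in how you handle the inner-ball term. The paper does not undo the rescaling; instead it applies H\"older directly to $u_\lambda$ with a tunable exponent $1+\varepsilon$:
\[
\Big|\int_{B_{1/\lambda}}\! u_\lambda(s)\,(\mathcal{L}_\lambda\psi_R)\,dx\Big|
\le \|u_\lambda(s)\|_{L^{1+\varepsilon}}\,\|\mathcal{L}_\lambda\psi_R\|_{L^{(1+\varepsilon)'}(B_{1/\lambda})}
\lesssim s^{-\frac{N}{2r}\frac{\varepsilon}{1+\varepsilon}}\,\lambda^{2r-2c-\frac{N\varepsilon}{1+\varepsilon}}\,R^{-2c},
\]
the first factor coming from interpolating mass conservation with the uniform $L^2$ decay \eqref{est.p.unif}, the second from the pointwise bound together with $|B_{1/\lambda}|\simeq\lambda^{-N}$. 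Choosing $\varepsilon$ with $\tfrac{2r-2c}{N}\le\tfrac{\varepsilon}{1+\varepsilon}<\tfrac{2r}{N}$ (possible precisely when $r<c+N/2$) makes the $\lambda$-power nonpositive and the time integral convergent, and produces a \emph{single} exponent $\alpha=1-\tfrac{N}{2r}\tfrac{\varepsilon}{1+\varepsilon}$. Your change-of-variables route is perfectly valid but ends in a case analysis yielding a sum of different powers of $t$ rather than one; this is harmless for every downstream application in the paper, though it does not literally match the stated form $Ct^{\alpha}/R^{2c}$.

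One small gap to flag: in the borderline case $N=2r$ (that is, $N=1$, $r=\tfrac12$), your bound $F(T)\lesssim 1+T^{\max\{0,1-N/(2r)\}}$ is false, since $\int_1^T\tau^{-1}\,d\tau=\log T$. The easy fix is to use the $L^p$ decay with a large finite $p$ in place of $L^\infty$, which gives $F(T)\lesssim 1+T^{1/p}$; picking $p$ large enough that $2r/p<2c$ then makes your $\lambda$-bookkeeping go through exactly as in your subcase $N<2r$.
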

\begin{proof}
By the comparison principle it is sufficient to consider nonnegative solutions. Let $\psi\in C^\infty(\rr^N)$ such that $\psi(x)\equiv 0$ for $|x|<1$, $\psi(x)\equiv 1$  
for $|x|>2$ and $0\leq \psi \leq 1$. Set $\psi_R(x)=\psi(x/R)$. Multiplying the equation satisfied by $u_\lambda$ by $\psi_R$ and integrating in time we get
$$
\int_{|x|>2R} u_\lambda(x,t)dx  \leq \int_{\rr^N} u_\lambda \psi_R\ dx =\int_{\rr^N} u_{0,\lambda} \psi_R\ dx + \int_0^t \int_{\rr^N}(\mathcal{L}_\lambda u_\lambda)(x,s)\psi_R(x)ds.
$$
For  $\lambda>1$ we have
$$
\int_{\rr^N} u_{0,\lambda} \psi_R\ dx \leq \lambda^N\int_{|x|>R} u_{0}(\lambda x)\ dx = \int_{|x|>\lambda R} u_{0}(x)\ dx \leq \int_{|x|> R} u_{0}(x)\ dx
$$
so we focus now on the last integral. We split it on two parts, say
$$
 \int_{\rr^N}(\mathcal{L}_\lambda u_\lambda)(s)\psi_R dx= \int_{|x|<1/\lambda} u_\lambda (s)(\mathcal{L}_\lambda\psi_R) dx dt   + \int_{|x|>1/\lambda} u_\lambda(s) (\mathcal{L}_\lambda\psi_R) dxdt:=I(s)+II(s).
$$

Let us start with $I(s)$. Applying Lemma~\ref{lemma:bound-for-psi} for $|x|<1/\lambda$ and H\"older's inequality we get
\begin{align}\notag
|I(s)|&\leq \| u_\lambda (s)\|_{L^{1+\varepsilon}(B_{1/\lambda})}\cdot    \| \mathcal{L}_{\lambda}\psi_R\|_{L^{(1+\varepsilon)'}(B_{1/\lambda})}
\leq \| u_\lambda (s)\|_{L^{1+\varepsilon}(B_{1/\lambda})} \lambda^{2r-2c-\frac{N}{(1+\varepsilon)'}}R^{-2c} \|\psi\|_{L^\infty(\rr^N)}\\[8pt]
\notag &\leq s^{-\frac{N}{2r}\frac{\eps}{1+\eps}} \lambda^{2r-2c-\frac{N\eps}{1+\eps}}R^{-2c}
\end{align}
where $(1+\varepsilon)'= (1+\varepsilon)/\eps$. Now we can integrate in time and obtain that
$$
\int_0^t |I(s)|ds \leq  \lambda^{2r-2c-\frac{N\eps}{1+\varepsilon}}R^{-2c} t^{1-\frac{N}{2r}\frac{\varepsilon}{1+\varepsilon}}
$$
provided that $\frac{N}{2r}\frac{\varepsilon}{1+\varepsilon}<1$. Under the assumption   $2r-2c<N$ we can choose  $\varepsilon=\eps(r,c,N)$ such that
$$
\frac{2r-2c}{N} \leq \frac{\varepsilon}{1+\varepsilon} < \frac{2r}{N}
$$
and then we choose $\alpha(r,c,N)=1-\frac{N}{2r}\frac{\varepsilon}{1+\varepsilon}$.

For the second integral the mass conservation of $u_\lambda$ and the estimates for $|\mathcal{L}_\lambda \psi_R|$ from Lemma~\ref{lemma:bound-for-psi} provide
$$
\int_0^t |II(s)|ds\leq \int_0^t \int_{|x|>1/\lambda} u_\lambda |\mathcal{L}_\lambda \psi_R|dx ds \leq \frac{t\|u_0\|_{L^1(\rr^N)}} {R^{2r}} \|\psi\|_{W^{2,\infty}(\rr^N)}.
$$
This finishes the proof.
\end{proof}

\subsection{Compactness of the family $\{u_\lambda\}$}\label{sect-compactness}
 
 In this section we will show several results about the family $\{u_\lambda\}_{\lambda>0}$ that will allow us study the convergence of this family to a certain function $U$. 
 
 {\bf Step I. Compactness in $C_{loc}((0,\infty),L^1_{loc}(B_\rho^c))$.}
 Let us now choose $0<\tau<T<\infty$ and $\rho>0$. Using estimate \eqref{est.e.unif} we obtain that
 \begin{equation}\label{est.r.rho}
 \|u_\lambda\|_{L^\infty((\tau,T),H^r(B_\rho^c))}\lesssim \tau ^{-(\frac N{4r}+\frac 12)}.
 \end{equation}
 Using also that by Lemma~\ref{lemma_time_derivative_bound}  for any $\lambda>\lambda(\rho)$ we have a uniform bound for the time derivative
\[
 \|\partial_t u_\lambda\|_{L^2( (\tau,t) : H^{-1}(B^c_\rho)  )}\leq C(\rho)\tau^{-\frac{N}{2r}}
\]
 we can apply Aubin-Lions-Simon compactness argument to obtain that, up to a subsequence,
 \[
 u_\lambda\rightarrow U \ \text{in}\ C([\tau,T],L^2_{loc}(B_\rho^c)),
 \]
 and in particular we also have this convergence in $C([\tau,T],L^1_{loc}(B_\rho^c))$. By a diagonal argument we obtain the desired property.

 {\bf Step II. Compactness in $C_{loc}((0,\infty),L^1_{loc}(\rr^N))$.} Let us choose an $R>0$. We prove that $u_\lambda$ converges to $U$ in $C_{loc}((0,\infty),L^1(B_R))$. 
 
For any $\rho<R$ we have 
$$
 \int_{B_R} |u_\lambda(t) - U(t)|dx \leq \int_{B_\rho} \big(|u_\lambda(t)| + |U(t)|\big)dx +  \int_{\rho<|x|<R}  |u_\lambda(t) - U(t)|dx.
$$
For a fixed $\rho>0$ the second integral goes to 0 as $\lambda\to\infty$, so let us focus on the first one. By Hölder's inequality and estimate~\eqref{est.p.unif}
$$
 \int_{B_\rho} |u_\lambda(t)|dx\leq \| u_\lambda(t) \|^{1/2}_{L^2(\rr^N)}\cdot \rho^{N/2}\lesssim t^{-\frac{N}{8r}}\rho^{N/2}.
$$

 On the other hand, in view of estimate \eqref{est.p.unif} for each $t>0$, up to a subsequence,  $u_\lambda(t)\rightharpoonup U(t)$ in $L^2(\rr^N)$. Hence estimate \eqref{est.p.unif} transfers to $U$:
 \begin{equation}\label{eq:U_in_L2}
 \|U(t)\|_{L^2(\rr^N)}\leq C t^{-\frac{N}{4r}}.
 \end{equation}
 Hence for any $\varepsilon>0$ we can choose a small enough $\rho$ such that
$$
 \int_{B_\rho} (|u_\lambda(x,t)| + |U(x,t)|)dx  \lesssim t^{-\frac{N}{8r}}\rho^{N/2}<\frac{\varepsilon}{2}.
$$
For this $\rho$ fixed we choose $\lambda>\lambda(\rho)$
such that
$$
 \int_{\rho<|x|<R}  |u_\lambda (t)- U(t)|dx<\frac{\varepsilon}{2},
$$
proving the desired result.

{\bf Step III. Compactness in $C_{loc}((0,\infty),L^1(\rr^N))$.} The compactness in  $C((\tau,T),L^1_{loc}(\rr^N))$ follows from the previous step. The tail control proved in Lemma \ref{lemma:tail-control} shows that this convergence is also in $C((\tau,T),L^1(\rr^N))$. It means that for each $t>0$, $u_\lambda(t)\rightarrow U(t)$ in $L^1(\rr^N)$. This shows that in particular $U(t)$ conserves the mass along the time:
\begin{equation}
\label{mass.conservation.U}\notag
\int_{\rr^N}U(x,t)dx=M, \quad \forall \ t>0.
\end{equation}

 {\bf Step IV. Regularity of the profile $U$.} Estimate \eqref{est.r.rho}  gives us that $u_\lambda(t)\rightharpoonup U(t)$ in $H^r(B_\rho^c)$ and 
 \begin{equation}\label{est.r.rho.U}\notag
 \|U\|_{L^\infty((\tau,T),H^r(B_\rho^c))}\leq C(\tau).
 \end{equation}
 We will prove that in fact $U\in L^\infty( (\tau,T) : H^r(\rr^N)  )$. We already have, by~\eqref{est.e.unif}, that
$$
\int_{B_\rho^c}\int_{B_\rho^c} \frac{\big(U(x,t)-U(y,t)\big)^2}{|x-y|^{N+2r}} dxdy\leq 
C(\|u_0\|_{L^1(\rr^N)},\|u_0\|_{L^2(\rr^N)})t^{-\frac{N}{2r}-1},
$$
for any $\rho>0$. We can define now the family
$$
w_\rho(x,y):=\frac{\big(U(x,t)-U(y,t)\big)^2}{|x-y|^{N+2r}}\chi_{B_\rho^c}(x) \chi_{B_\rho^c}(y)
$$
which satisfies that $0\leq w_\rho (x,y)\leq w_{\rho'} (x,y)$ for any $\rho>\rho'$ and as $\rho \to 0$,
$$
w_\rho(x,y) \to \frac{\big(U(x,t)-U(y,t)\big)^2}{|x-y|^{N+2r}}\text{ almost everywhere in }\rr^N\times \rr^N,
$$
so  by the monotone convergence theorem and the fact, that by~\eqref{eq:U_in_L2}, $U\in L^2(\rr^N)$ we obtain that $U(t)$ belongs $H^r(\rr^N)$ and 
$$
\int_{\rr^N} \int_{\rr^N} \frac{\big(U(x,t)-U(y,t)\big)^2}{|x-y|^{N+2r}} dxdy\leq C(\|u_0\|_{L^1(\rr^N)},\|u_0\|_{L^2(\rr^N)})t^{-\frac{N}{2r}-1}.
$$
%
%

{\bf Step V. Equation satisfied by the limit $U$.}\label{sect:eq-limit}
We now show that the limit function $U$ is solution in the sense  of Definition \ref{very-weak} of the equation
\begin{equation}
	\label{eq.U}
	U_t + C_{\Omega_{r}}(-\Delta)^r U = 0 \text{ for }x\in\rr^N,\quad U(0)=\mu_0=M\delta_0
\end{equation}
where $M$ is the mass of the initial data  $u_0$. For any $M$ 
the results of \cite{MR3614666} show that there exists a unique \textit{very weak} solution $U_M$ of problem \eqref{eq.U} given by
\[
U_M(x,t)=MC_\Omega^{\frac 1{2r}} K_t^r(xC_\Omega^{\frac 1{2r}}),
\]
 where $K_t(x)$ is  the classical kernel of the fractional heat equation $U_t + (-\Delta)^r U = 0$ in $\rr^N$.

\begin{lemma}
	The limit function  $U\in C((0,\infty),L^1(\rr^N))$ satisfies 
	\begin{align*}
		\int_{\rr^N}& U(x,T)\varphi(x)dx - \int_{\rr^N} U(x,\tau)\varphi(x)dx \\
		&= \frac{\alpha_r}2\int_\tau^T \int_{\rr^N} \int_{\rr^N}\frac{U(x,t)-U(y,t)}{|x-y|^{N+2r}} \big(\varphi(x)-\varphi(y)\big)dxdydt,
	\end{align*}
	for any $\varphi\in C_c^\infty(\rr^N)$ and  $0<\tau<T<\infty.$ 
\end{lemma}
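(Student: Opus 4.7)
The plan is to pass to the limit $\lambda\to\infty$ in the weak formulation of the rescaled equation~\eqref{rescaled} tested against $\varphi$ and integrated in time. Since $u_\lambda(t)\in D(\mathcal{L}_\lambda)$ for $t>0$ and $\varphi\in C_c^\infty(\rr^N)\subset \mathcal{H}(\rr^N)$,
\[
\int_{\rr^N} u_\lambda(T)\varphi\,dx-\int_{\rr^N}u_\lambda(\tau)\varphi\,dx=-\int_\tau^T \mathcal{E}_\lambda(u_\lambda(t),\varphi)\,dt,
\]
where $\mathcal{E}_\lambda(u_\lambda,\varphi)=I_s^\lambda+I_r^\lambda+I_c^\lambda$ splits according to the three regions appearing in~\eqref{e.lambda}. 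The left-hand side converges to the desired limit thanks to the strong convergence $u_\lambda\to U$ in $C([\tau,T];L^1(\rr^N))$ obtained in Step III of the compactness section. It therefore suffices to show that $\int_\tau^T I_s^\lambda\,dt$ and $\int_\tau^T I_c^\lambda\,dt$ vanish, while $\int_\tau^T I_r^\lambda\,dt$ converges to the expression on the right-hand side of the lemma.

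For the short-range and coupling terms I would apply Cauchy--Schwarz on each piece, separating the $u_\lambda$-factor (bounded by $\mathcal{E}_\lambda(u_\lambda,u_\lambda)^{1/2}\lesssim t^{-(1+N/(2r))/2}$ via~\eqref{est.e.unif}) from a $\varphi$-factor on the same domain. For $I_s^\lambda$, the Lipschitz estimate $(\varphi(x)-\varphi(y))^2\leq \|\nabla\varphi\|_\infty^2|x-y|^2$ together with the smallness of $B_{1/\lambda}$ produces a prefactor of order $\lambda^{r-N/2-1}\to 0$. For $I_c^\lambda$, splitting the $y$-integral at $|y|=R>\mathrm{supp}(\varphi)$ — with the Lipschitz estimate on $|y|<R$ and $(\varphi(x)-\varphi(y))^2\leq \|\varphi\|_\infty^2$ plus $\iint_{|x|<1/\lambda,|y|>R}|x-y|^{-N-2c}dxdy\lesssim \lambda^{-N}R^{-2c}$ on $|y|>R$ — produces a prefactor of order $\lambda^{r-c-N/2}$, which tends to zero precisely under the strict assumption $r<c+N/2$ of Theorem~\ref{first.term}. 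Time integration then yields $\int_\tau^T(|I_s^\lambda|+|I_c^\lambda|)\,dt\to 0$.

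The main obstacle is the $r$-piece, because Step IV only provides weak convergence $u_\lambda\rightharpoonup U$ in $H^r(B_\rho^c)$ for each fixed $\rho>0$, not on the whole of $\rr^N$. I therefore fix a small $\rho>0$ and decompose $I_r^\lambda=J_r^\lambda(\rho)+K_r^\lambda(\rho)$, where $J_r^\lambda(\rho)$ is the integral over $\{|x|,|y|>\rho\}$ and $K_r^\lambda(\rho)$ is the remainder (defined once $\lambda>1/\rho$). The piece $J_r^\lambda(\rho)$ is a fixed continuous linear functional on $H^r(B_\rho^c)$ — with norm controlled by $[\varphi]_{r,B_\rho^c}<\infty$ — applied to $u_\lambda$; the Step IV weak convergence then gives $J_r^\lambda(\rho)\to J_r(\rho)$ pointwise in $t$, and a Cauchy--Schwarz bound together with~\eqref{est.e.unif} provides a uniform-in-$\lambda$, $L^1([\tau,T])$ majorant allowing dominated convergence of the time integrals. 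For the remainder $K_r^\lambda(\rho)$, which is integrated over a set where at least one of $x,y$ lies in $B_\rho$, another Cauchy--Schwarz combined with the boundedness on $\rr^N$ of the function $F(x):=\int_{\rr^N}(\varphi(x)-\varphi(y))^2|x-y|^{-N-2r}dy$ (finite for $\varphi\in C_c^\infty$ since $r<1$) yields $|K_r^\lambda(\rho)|\lesssim \rho^{N/2}\mathcal{E}_\lambda(u_\lambda,u_\lambda)^{1/2}$ uniformly in $\lambda>1/\rho$, and hence $\int_\tau^T |K_r^\lambda(\rho)|\,dt\lesssim \rho^{N/2}$. Letting first $\lambda\to\infty$ and then $\rho\to 0$ — the latter step using $U(t)\in H^r(\rr^N)$ from Step IV to pass $J_r(\rho)$ to the full-space $r$-piece by dominated convergence — completes the identification of the limit and proves the lemma.
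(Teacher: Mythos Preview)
Your proof is correct. The handling of the vanishing pieces $I_s^\lambda$ and $I_c^\lambda$ via Cauchy--Schwarz plus smallness of the $\varphi$-factor on shrinking domains is essentially what the paper does (it just points to Lemma~\ref{max.est} instead of writing the estimates out), and both arguments require the strict inequality $r<c+N/2$.

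The genuine difference is in the $r$-piece. The paper avoids your two-parameter $(\lambda,\rho)$ limit by working directly in the product space: it sets
\[
f_\lambda(x,y,t)=\frac{u_\lambda(x,t)-u_\lambda(y,t)}{|x-y|^{(N+2r)/2}}\,\chi_{B_{1/\lambda}^c}(x)\chi_{B_{1/\lambda}^c}(y),
\]
observes that $(f_\lambda)$ is bounded in $L^2((\tau,T)\times\rr^N\times\rr^N)$ by the energy estimate~\eqref{est.e.unif}, extracts a weak limit $f$, and identifies $f(x,y,t)=(U(x,t)-U(y,t))|x-y|^{-(N+2r)/2}$ on each bounded $\Omega\times\Omega$ via the a.e.\ convergence coming from the strong local convergence of $u_\lambda$, together with the classical fact that a.e.\ convergence plus $L^2$-boundedness pins down the weak limit. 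Testing against the fixed $L^2$ function $\psi(x,y)=(\varphi(x)-\varphi(y))|x-y|^{-(N+2r)/2}$ then gives \eqref{term.I_2} in a single $\lambda\to\infty$ step. Your route instead exploits only the weak $H^r(B_\rho^c)$ convergence already recorded in Step~IV, pays for it with the remainder bound $|K_r^\lambda(\rho)|\lesssim \rho^{N/2}\mathcal{E}_\lambda^{1/2}$, and closes by sending $\rho\to0$ using $U(t)\in H^r(\rr^N)$. The paper's argument is shorter; yours is slightly more self-contained in that it does not invoke an external a.e.--vs--weak identification lemma.
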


\begin{proof}
	Let us multiply  our equation by $\varphi$ to obtain, for any $0<\tau < T<\infty$,
	\begin{align*}
		\int_{\rr^N}& u_\lambda(x,T)\varphi(x)\ dx - \int_{\rr^N}  u_\lambda(x,\tau)\varphi(x)\ dx \\
		&= \int_\tau^T \int_{\rr^N} \mathcal{L}_\lambda  (u_\lambda(x,t))\varphi(x) dxdt=
		\int_\tau^T \mathcal{E}_\lambda(u_\lambda,\varphi)=I_1+I_2,
	\end{align*}
	where
	\begin{align*}
		I_1=&\frac{\alpha_s}2 \lambda^{2r-2s}\int_\tau^T \int_{B_{1/\lambda}} \int_{B_{1/\lambda}}\frac{u_\lambda(x,t)-u_\lambda(y,t)}{|x-y|^{N+2s}} \big(\varphi(x)-\varphi(y)\big)dxdydt\\
		&+\alpha_c
		\lambda^{2r-2c}\int_\tau^T \int_{B_{1/\lambda}} \int_{B^c_{1/\lambda}}\frac{u_\lambda(x,t)-u_\lambda(y,t)}{|x-y|^{N+2c}} \big(\varphi(x)-\varphi(y)\big)dxdydt
	\end{align*}
and
\[
I_2=\frac{\alpha_r}2 \int_\tau^T \int_{B^c_{1/\lambda}} \int_{B^c_{1/\lambda}}\frac{u_\lambda(x,t)-u_\lambda(y,t)}{|x-y|^{N+2r}} \big(\varphi(x)-\varphi(y)\big)dxdydt.
\]
Under the assumption $2r-2c<N$ estimate  \eqref{est.e.unif} and similar estimates as in the proof of  Lemma \ref{max.est},  show  that $I_1\rightarrow 0$ as $\lambda\rightarrow \infty$. We prove now that 
\begin{equation}\label{term.I_2}
I_2\rightarrow \frac{\alpha_r}2\int_\tau^T \int_{\rr^N}\int_{\rr^N}\frac{(U(x,t)-U(y,t))(\varphi(x)-\varphi(y))}{|x-y|^{N+2r}}dxdydt.
\end{equation}

We set 
$
\psi\in L^2(\rr^N \times \rr^N)$ by 
\[
\psi(x,y):=\frac{\varphi(x)-\varphi(y)}{|x-y|^{(N+2r)/2}},\]
 and 
  \[
  f_\lambda (x,y,t):= \frac{(u_\lambda(x,t)-u_\lambda(y,t))}{
  |x-y|^{(N+2r)/2}}\chi_{B_{1/\lambda}^c}(x)\chi_{B_{1/\lambda}^c}(y).\]
   Clearly, by~\eqref{est.r.rho} we have that the family $f_\lambda$ is uniformly bounded in $ L^2((\tau,T)\times\rr^N \times \rr^N)$ and so there exists a $f\in  L^2((\tau,T)\times\rr^N \times \rr^N)$ such that $f_\lambda\rightharpoonup f$.
We will prove that
 \[
f(x,y, t)=\frac{(U(x,t)-U(y,t)\big)}{|x-y|^{(N+2r)/2}},\ \text{a.e. in}\ (\tau,T)\times \rr^N \times \rr^N
\]
so \eqref{term.I_2} holds.

For a bounded domain $\Omega$ the strong convergence obtained in the previous steps  show that, up to a subsequence,  $u_\lambda(x,t)\to U(x,t)$ almost everywhere in $(\tau,T)\times\Omega$, which means that $f_\lambda(x,y,t) \to \big(U(x,t)-U(y,t)\big)|x-y|^{-(N+2r)/2}$ a.e. in $(\tau,T)\times\big(\Omega\times\Omega\big)$. 
Classical results on weak convergence 
(see for example \cite[Prop. 5.4.6, p.~106]{MR3112778}) show  that $f_\lambda \rightharpoonup \big(U(x,t)-U(y,t)\big)|x-y|^{-(N+2r)/2}$ in $L^2(\Omega\times\Omega)$, which means precisely that $f(x,y,t)=\big(U(x,t)-U(y,t)\big)|x-y|^{-(N+2r)/2}$ in $(\tau, T)\times \big(\Omega\times\Omega\big)$. Since $\Omega$ was arbitrary, we are done.
\end{proof}
 
 Next we focus on the initial data.  
 \begin{lemma}
For any $\varphi\in BC(\rr^N)$ 
\begin{equation}
\label{limit.initial.data}
  \lim_{t\rightarrow 0} \int_{\rr^N } U(x,t)\varphi(x)dx=M\varphi(0).
\end{equation}
\end{lemma}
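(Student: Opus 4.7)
The claim amounts to saying that $U(\cdot,t)\rightharpoonup M\delta_{0}$ as $t\to 0^{+}$ when tested against functions in $BC(\rr^{N})$. I would combine two inputs: (i) mass conservation $\int_{\rr^{N}}U(x,t)\,dx=M$, already proved in Step III of Section~\ref{sect-compactness}; and (ii) concentration of the mass at the origin, i.e.\
$$
\lim_{t\to 0^{+}}\int_{|x|>\delta}|U(x,t)|\,dx=0\qquad\text{for every }\delta>0.
$$
Granted these two facts, the conclusion is the standard dispersion-to-a-Dirac-mass argument. Write
$$
\int_{\rr^{N}}U(x,t)\varphi(x)\,dx-M\varphi(0)=\int_{\rr^{N}}U(x,t)\bigl(\varphi(x)-\varphi(0)\bigr)\,dx,
$$
split the integral at $|x|=\delta$, bound the near piece by $\sup_{|x|<\delta}|\varphi(x)-\varphi(0)|\cdot\|U(t)\|_{L^{1}(\rr^{N})}$ (which is controlled by the continuity of $\varphi$ at $0$ and the contraction bound $\|U(t)\|_{L^{1}(\rr^{N})}\le\|u_{0}\|_{L^{1}(\rr^{N})}$), and bound the far piece by $2\|\varphi\|_{L^{\infty}(\rr^{N})}\int_{|x|>\delta}|U(x,t)|\,dx$. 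Letting $t\to 0^{+}$ kills the far piece by (ii); then $\delta\to 0^{+}$ kills the near piece by continuity of $\varphi$ at $0$.

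For the concentration input (ii) I would revisit the proof of Lemma~\ref{lemma:tail-control}. The statement has $\int_{|x|>R}u_{0}$ on the right-hand side, but the intermediate step $\int u_{0,\lambda}\psi_{R}\le\int_{|y|>\lambda R}u_{0}(y)\,dy$ actually yields the sharper inequality
$$
\int_{|x|>2R}|u_{\lambda}(x,t)|\,dx\leq\int_{|y|>\lambda R}|u_{0}(y)|\,dy+C\Big(\frac{t^{\alpha(r,c,N)}}{R^{2c}}+\frac{t}{R^{2r}}\Big),
$$
valid for every $\lambda>\max\{1,2/R\}$; for signed data I would apply the comparison-principle reduction used in that proof to $u_{0}^{\pm}$ separately and add the resulting tail bounds. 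Now fix $R>0$ and pass to the limit $\lambda\to\infty$ using the $L^{1}(\rr^{N})$-convergence $u_{\lambda}(\cdot,t)\to U(\cdot,t)$ from Step III of Section~\ref{sect-compactness}; since $\int_{|y|>\lambda R}|u_{0}|\,dy\to 0$, the first term on the right disappears and we obtain
$$
\int_{|x|>2R}|U(x,t)|\,dx\leq C\Big(\frac{t^{\alpha(r,c,N)}}{R^{2c}}+\frac{t}{R^{2r}}\Big)\qquad\forall\,R>0,\ t>0.
$$
Choosing $R=\delta/2$ gives (ii).

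The only mildly delicate point is that Lemma~\ref{lemma:tail-control} as stated does not give vanishing tails in the parameter $\lambda$, so one has to extract the refined bound from inside its proof (or equivalently approximate $u_{0}$ by data compactly supported near the origin, which, after rescaling by $\lambda$, gets arbitrarily close to $0$ in $L^{1}$). Once this refined tail bound is in hand, everything else is straightforward bookkeeping.
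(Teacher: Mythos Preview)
Your argument is correct and is a genuinely different route from the paper's. The paper does not prove the concentration estimate $\int_{|x|>\delta}|U(x,t)|\,dx\to 0$ directly; instead it first establishes \eqref{limit.initial.data} for the restricted class of test functions $\varphi\in W^{2,\infty}(\rr^N)$ that are constant near the origin, by multiplying the equation for $u_\lambda$ by $\varphi$ and controlling $\int u_\lambda\,\mathcal L_\lambda\varphi$ via the pointwise bounds of Lemma~\ref{est.Llamba} (the inner zone $|x|<1/\lambda$ requires the same $L^{1+\varepsilon}$-H\"older trick as in Lemma~\ref{lemma:tail-control}). It then upgrades to general $\varphi\in BC(\rr^N)$ through a separate approximation lemma (Lemma~\ref{w2app}) combined with the tail control of Lemma~\ref{lemma:tail-control}. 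Your approach bypasses Lemma~\ref{w2app} entirely: by keeping the intermediate term $\int_{|y|>\lambda R}|u_0|$ in the tail estimate and sending $\lambda\to\infty$, you obtain a quantitative concentration bound for $U$ itself, after which the Dirac-mass argument applies uniformly to every $\varphi\in BC(\rr^N)$. This is shorter and more transparent; the paper's route has the minor advantage that it uses Lemma~\ref{lemma:tail-control} only as stated, without re-entering its proof.
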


\begin{remark}
	In fact in order to use the uniqueness results of \cite{MR3614666} it is sufficient to consider the above limit only for functions   $\varphi\in C_0(\rr^N)$, i.e. functions that vanish at infinity. Nonetheless we present here a stronger result.
\end{remark}

\begin{proof}
 Let us first consider a smooth function $\varphi$ that it is constant in a ball centered at the origin $B_\rho(0)$. Multiplying equation \eqref{rescaled} by $\varphi$ and integrating in the space variable gives us
 \begin{align*}
 \int_{\rr^N} u_\lambda(x,t)\varphi(x)dx&-\int_{\rr^d}
 u_{0,\lambda}(x)\varphi(x)dx=\int_0^t \int_{\rr^N}u_\lambda(s,x)(\mathcal{L}_\lambda \varphi)(x)dxds\\
 &=\int_0^t \int_{|x|<1/\lambda} u_\lambda(s,x)(\mathcal{L}_\lambda \varphi)(x)dxds+ \int_0^t \int_{|x|>1/\lambda}u_\lambda(s,x)(\mathcal{L}_\lambda \varphi)(x)dxds\\
 &:=I_1+I_2.
 \end{align*}
For $\lambda\geq \max\{1,2/\rho\} $  we can apply Lemma \ref{est.Llamba} and the fact that $r\leq c+\frac N2$ to estimate the second term
\[
|I_2|\lesssim Mt \Big(\rho^{2-2r}\|D^2\varphi\|_{L^\infty(\rr^d)} + (\rho^{-2r} +\rho^{-N-2c})\|\varphi\|_{L^\infty(\rr^N)} \Big).
\]
In the case of $I_1$ we proceed as for the tail control. Let $\varepsilon>0$ be a small parameter that will be chosen latter. We use the first estimates in Lemma \ref{est.Llamba}:
\begin{align*}
|I_1| &\leq \|(\mathcal{L}_\lambda \varphi)\|_{L^{(1+\eps)'}(|x|<1/\lambda)}\int_0^t \| u_\lambda(s)\|_{L^{1+\eps}(\rr^N)}   ds\\
&\lesssim 
\lambda^{2r-2c}\rho^{-2c}\|\varphi\|_{L^\infty(\rr^N)}\lambda^{-N(1+\eps)'}\int_0^t
s^{-\frac{N\eps}{2r(1+\eps)}}ds\\
&\simeq\rho^{-2c}\|\varphi\|_{L^\infty(\rr^N)}\lambda^{2r-2c-\frac{N\eps}{1+\eps}}t^{1-\frac{N\eps}{2r(1+\eps)}},
\end{align*}
provided that $\frac{N\eps}{2r(1+\eps)}<1$. Under the assumption $2r-2c<N$ we can choose $\eps=\eps(r,c,N)>0$ such that $2r-2c\leq \frac{N\eps}{1+\eps}$ and $\frac{N\eps}{2r(1+\eps)}<1$. Thus there exists a positive constant $\alpha(r,c,N)$ such that 
\[
|I_1|\leq \rho^{-2c}\|\varphi\|_{L^\infty(\rr^N)} t^{\alpha(r,c,N)}.
\]
It follows that for large enough $\lambda$ we have
\[
\Big|\int_{\rr^N} u_\lambda(x,t)\varphi(x)dx-\int_{\rr^d}
 u_{0,\lambda}(x)\varphi(x)dx\Big|\leq
 C(\varphi)(t+t^{\alpha(r,c,N)}).
\]
Letting $\lambda\rightarrow\infty$ we obtain the same property for the limit point $U$:
\begin{equation}
\label{est.U.2}
  \Big|\int_{\rr^N} U(x,t)\varphi(x)dx-M\varphi(0)\Big|\leq
 C(\varphi)(t+t^{\alpha(r,c,N)}).
\end{equation}

This shows that \eqref{limit.initial.data} holds for functions $\varphi\in W^{2,\infty}(\rr^d)$ which are locally constants near the origin. 

Let us now show that property \eqref{limit.initial.data} holds for all $\varphi\in BC(\rr^N)$. Indeed by Lemma \ref{w2app} for any $\varphi\in BC(\rr^N)$ there exists a sequence of functions $\varphi_n\in W^{2,\infty}(\rr^N)$ which are locally constant in a neighborhood of the origin such that $\varphi_n\rightarrow \varphi$ uniformly on compact sets and $\|\varphi_n\|_{L^\infty(\rr^N)}\leq \|\varphi\|_{L^\infty(\rr^N)}$. 

For completeness we prefer to write the full argument here even though it is a standard procedure. Let us choose $R$ large enough that will be fixed latter. We write
\begin{align*}
\int_{\rr^N} U(x,t)\varphi(x)dx&-M\varphi(0)=\int_{|x|>2R} U(x,t)(\varphi(x)-\varphi_n(x))dx\\
&-M(\varphi(0)-\varphi_n(0))+\int_{|x|<2R} U(x,t)(\varphi(x)-\varphi_n(x))dx\\
&+ \int_{\rr^N} U(x,t)\varphi_n(x)dx-M\varphi_n(0)\\
&=I+II+III.
\end{align*}  
Lemma \ref{lemma:tail-control} and the strong convergence  in $L^1(\rr^N)$ of $u_\lambda(t)$ towards $U(t)$ show that limit point $U$ satisfies the same tail control  as in Lemma \ref{lemma:tail-control}:
\[ \int_{|x|>2R}|U(x,t)|\ dx \leq   \int_{|x|> R} |u_{0}(x)| dx +  C\left(\frac{t^{\alpha(r,c,N)}}{R^{2c}} +\frac{t}{R^{2r}}\right). \]
It implies that given any $\eps>0$ the first term satisfies
\begin{align*}
	|I|&\leq 2\|\varphi\|_{L^\infty(\rr^N)}\int _{|x|>2R}|U(x,t)|dx\\
	&\leq 2\|\varphi\|_{L^\infty(\rr^N)}\Big( \int_{|x|> R} |u_{0}(x)| dx +  C\big(\frac{t^{\alpha(r,c,N)}}{R^{2c}} +\frac{t}{R^{2r}}\big) \Big)<\eps,
\end{align*}
for 
$t<1$ and $R>R(\eps, u_0)$. For this large $R$ we can choose an $n$ large enough such that $II$ is small. Indeed using the uniform convergence of $\varphi_n$ towards $\varphi$ in the ball of radius $2R$ we get for large $n$ that
\[
| II |\leq M\|\varphi_n-\varphi\|_{L^\infty(|x|<2R)}<\eps.
\]
For this $n$ large enough we apply estimate \eqref{est.U.2} so we can choose a small $t$ such that $|III|<\eps$. It follows that $|I+II+III|<3\eps$ for small enough $t$ which proves the desired estimate \eqref{limit.initial.data} for $U$.
 \end{proof}

\textbf{Final Step and proof of Theorem \ref{first.term}}. For simplicity let us introduce the rescaled kernel
\[\tilde K_t^r(x)=
C_\Omega^{\frac 1{2r}} K_t^r(C_\Omega^{\frac 1{2r}}x).
\]
We first observe that it is sufficient to prove the result for initial data in $L^1(\rr^N)\cap L^2(\rr^N)$. 
Assume that the result is true for initial data in $L^1(\rr^N)\cap L^2(\rr^N)$. For given $u_0\in L^1(\rr^N)$ we choose a sequence $u_{0n}\in L^1(\rr^N)\cap L^2(\rr^N)$ such that $u_{0n}\rightarrow u_0$ in $L^1(\rr^N)$. For $t>1$ using the decay in Theorem \ref{decay} to $u-u_n$
we get
\begin{align*}
t^{\frac N{2r}(1-\frac 1p)}&\|u(t)-U_M(t)\|_{L^p(\rr^N)}\\
&\leq  t^{\frac N{2r}(1-\frac 1p)}\|u(t)-u_n(t)\|_{L^p(\rr^N)}+t^{\frac N{2r}(1-\frac 1p)}\Big\|u_n(t)-\int_{\rr^N}u_{0n}\tilde K_t^r\Big\|_{L^p(\rr^N)}\\
&\quad \quad +\Big|\int _{\rr^N}u_0-\int_{\rr^N}u_{0n}\Big|t^{\frac N{2r}(1-\frac 1p)} \|\tilde K_t^r\|_{L^p(\rr^N)}\\
&\leq 2\|u_{0}-u_{0n}\|_{L^1(\rr^N)} +t^{\frac N{2r}(1-\frac 1p)}\Big\|u_n(t)-\int_{\rr^N}u_{0n}\tilde K_t^r\Big\|_{L^p(\rr^N)}.
\end{align*}
Given an $\varepsilon>0$ we first choose an large $n$ such that the first term in the right hand side is less than $\varepsilon$ and then for any large time $t$ we obtain the desired result. 

In the case of initial data in $L^1(\rr^N)\cap L^2(\rr^N)$ all the estimates in the previous steps holds and then we obtain that up to a subsequence $u_\lambda\rightarrow U_M(t) $ in $C((0,\infty),L^1(\rr^N))$. Since in Step V we uniquely  identified the profile  $U_M$ it means that the convergence holds for the whole sequence, not only for  a subsequence.


 The $L^1$ convergence of $u_\lambda(1)$ towards $U_M(1)$ shows the desired property in $L^1$:
 \[
 \|u(t)-U_M(t)\|_{L^1(\rr^N)}\rightarrow 0.
 \]
 For $1<p<\infty$ we use the decay in $L^{2p}(\rr^N)$ norm of the solution and of the function $U_M$ (\cite[Lemma 2.2]{MR3789847}) and H\"older interpolation with exponent $\alpha=2(p-1)/(2p-1)$:
 \[
  \|u(t)-U_M(t)\|_{L^p(\rr^N)}\leq  \|u(t)-U_M(t)\|_{L^1(\rr^N)}^{ {1-\alpha} } 
  \|u(t)-U_M(t)\|_{L^{2p}(\rr^N)}^{\alpha} \leq o(1) t^{-\frac N{2r}(1-\frac 1p)}.
 \]
\section{Comments on possible extensions}
The authors would like to address here on some possible extensions of the results presented here.
\begin{itemize}
	\item[-] Theorems~\ref{decay} and~\ref{first.term} concern the case where $\Omega_s$ is a bounded Lipschitz domain. Nonetheless we believe that these results can be extended with some effort to the case where $\Omega_s$ is a finite union of sets of this kind or even a Lipschitz domain with finite measure. On the other hand the Lipschitz and finite measure hypothesis seem much harder to generalize.
	
	\item[-] Speaking of different domains, one can study Theorems~\ref{decay} and~\ref{first.term} in the case where $\Omega=\Omega_{r}\cup \Omega_{s}$ is a cilinder. This is a problem of high interest in the literature which combines in some sense the Cauchy and Dirichlet problems. Again if $\Omega_s$ is a bounded Lipschitz domain similar results are expected along the "long" variables, but  since some of our arguments imply taking ball of bigger and bigger radii inside $\Omega_{s}$ we expect some new conditions on the exponents to appear. Also, the presence of a boundary where $u=0$ may influence the long-time behaviour heavily, so this case seems challenging.
	
	\item[-] As shown in~\cite{MR4114263} it is reasonable to consider other combinations of diffusion like, for example, the case where the diffusion in the bounded domain is given by a convolution with an $L^1$ probability kernel. As long as the gradient flow structure is preserved and the different scaling properties of the operators are considered with care the authors believe that many of the results here can be extended to these cases.
	
	\item[-] Finally, it is also very interesting to study if one can recover solutions of the classical heat equation by taking the limit of the solution $u$ as the fractionary exponents $r,s,c$ go to 1. Again this question was addressed in~\cite{MR4114263} and the authors expect a positive answer to this question. 
\end{itemize}

\section{Appendix}\label{sect-appendix}
\setcounter{lemma}{0}
\setcounter{theorem}{0}
\setcounter{equation}{0}
\setcounter{subsection}{0}
\renewcommand{\theequation}{A.\arabic{equation}}
\renewcommand{\thesection}{A}
\begin{lemma}
\label{w2app}
For any function $\varphi\in BC(\rr^N)$ there exists an approximation sequence $\{\varphi_n\}_{n\geq 1}\in W^{2,\infty}(\rr^N)$ such that all $\varphi_n$ are constant in a neighborhood of the origin and satisfy:\\
i) $\|\varphi_n\|_{L^\infty(\rr^N}\leq \|\varphi\|_{L^\infty(\rr^N})$,\\
ii) $\varphi_n$ converges to $\varphi$ unifomly on compact sets. 
\end{lemma}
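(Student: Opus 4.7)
The plan is to combine a standard mollification with a cutoff near the origin, arranged as a convex combination so the sup-norm bound is preserved automatically.

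First I would fix a standard mollifier $\rho\in C_c^\infty(\rr^N)$ with $\rho\geq 0$, $\operatorname{supp}\rho\subset B_1(0)$ and $\int_{\rr^N}\rho=1$, and set $\rho_\eps(x)=\eps^{-N}\rho(x/\eps)$. For $\varphi\in BC(\rr^N)$ the mollified functions $\tilde\varphi_\eps:=\varphi\ast\rho_\eps$ belong to $C^\infty(\rr^N)\cap L^\infty(\rr^N)$, satisfy $\|\tilde\varphi_\eps\|_{L^\infty(\rr^N)}\leq\|\varphi\|_{L^\infty(\rr^N)}$, and $\tilde\varphi_\eps\to\varphi$ uniformly on every compact set (because $\varphi$ is uniformly continuous on compacts). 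All derivatives of $\tilde\varphi_\eps$ are bounded (with $\eps$-dependent constants), so in particular $\tilde\varphi_\eps\in W^{2,\infty}(\rr^N)$.

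Next I would flatten $\tilde\varphi_\eps$ near the origin. Pick $\eta\in C_c^\infty(\rr^N)$ with $0\leq\eta\leq 1$, $\eta\equiv 1$ on $B_1(0)$ and $\eta\equiv 0$ outside $B_2(0)$, and set $\eta_\delta(x)=\eta(x/\delta)$. Choose two sequences $\eps_n\downarrow 0$ and $\delta_n\downarrow 0$ (for instance $\eps_n=\delta_n=1/n$) and define
\[
\varphi_n(x):=\eta_{\delta_n}(x)\,\tilde\varphi_{\eps_n}(0)+\bigl(1-\eta_{\delta_n}(x)\bigr)\,\tilde\varphi_{\eps_n}(x).
\]
Since $\eta_{\delta_n}\equiv 1$ on $B_{\delta_n}(0)$, we immediately get $\varphi_n\equiv\tilde\varphi_{\eps_n}(0)$ there, so $\varphi_n$ is constant in a neighborhood of the origin. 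The regularity $\varphi_n\in W^{2,\infty}(\rr^N)$ follows by the product rule because $\eta_{\delta_n}\in C_c^\infty(\rr^N)$ and $\tilde\varphi_{\eps_n}\in W^{2,\infty}(\rr^N)$.

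Because $\varphi_n$ is a convex combination of $\tilde\varphi_{\eps_n}(0)$ and $\tilde\varphi_{\eps_n}(x)$, I get pointwise
\[
|\varphi_n(x)|\leq\max\bigl\{|\tilde\varphi_{\eps_n}(0)|,|\tilde\varphi_{\eps_n}(x)|\bigr\}\leq\|\tilde\varphi_{\eps_n}\|_{L^\infty(\rr^N)}\leq\|\varphi\|_{L^\infty(\rr^N)},
\]
which gives property (i). For (ii), on a compact set $K$ I would estimate
\[
|\varphi_n(x)-\varphi(x)|\leq\eta_{\delta_n}(x)\bigl(|\tilde\varphi_{\eps_n}(0)-\varphi(0)|+|\varphi(0)-\varphi(x)|\bigr)+\bigl(1-\eta_{\delta_n}(x)\bigr)|\tilde\varphi_{\eps_n}(x)-\varphi(x)|.
\]
The last term tends to $0$ uniformly on $K$ by the uniform convergence $\tilde\varphi_{\eps_n}\to\varphi$ on $K$; the first sub-term of the middle vanishes as $\eps_n\to 0$ by the same mollification convergence at the point $0$; the second sub-term is supported in $B_{2\delta_n}(0)$ and is bounded by $\sup_{|y|\leq 2\delta_n}|\varphi(y)-\varphi(0)|\to 0$ by continuity of $\varphi$ at the origin. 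Combining these gives the uniform convergence on $K$.

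The only delicate point is the simultaneous enforcement of all three conditions: a naive additive correction (adding a bump that pushes $\tilde\varphi_{\eps_n}$ to a constant near $0$) would double the sup-norm, hence the choice of a convex combination with weight $\eta_{\delta_n}$, which is the main trick in the construction. The remaining verifications are routine once $\delta_n$ and $\eps_n$ are taken to zero (any independent rates work, since the bumps are handled separately).
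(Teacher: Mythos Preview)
Your proof is correct and essentially identical to the paper's: both mollify $\varphi$ and then replace the mollification near the origin by its value at $0$ via the convex combination $\eta_{\delta}(x)\,\tilde\varphi_\eps(0)+(1-\eta_{\delta}(x))\,\tilde\varphi_\eps(x)$ (the paper writes this as $\psi_n(x)+\theta_\rho(x)(\psi_n(0)-\psi_n(x))$, which is the same expression). The only cosmetic difference is that you take independent sequences $\eps_n,\delta_n\downarrow 0$ and use continuity of $\varphi$ at the origin, while the paper selects $\rho_n$ after $n$ using continuity of $\psi_n$; both choices work.
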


\begin{proof}
Let us choose a sequence of mollifiers $(\rho_n)_{n\geq 1}$ as in \cite[Ch.~4.4, p.~108]{MR2759829} and $\psi_n=\rho_n\ast \varphi$. It follows that $\psi_n$ are smooth, $\|\psi_n\|_{L^\infty(\rr^N)}\leq \|\varphi\|_{L^\infty(\rr^N)}$ and $\psi_n\rightarrow\varphi$ uniformly on compact sets \cite[Prop.~4.2.1, Ch.~4, p.~108]{MR2759829}. It remains to make the approximation to be locally constant near origin. To do that we choose a function $\theta\in C^\infty_c(\rr^N)$, $0\leq \theta\leq 1$, such that $\theta(x)\equiv 1$ in $|x|<1$ and $\theta(x)\equiv 0$ in $|x|>2$ and set $\theta_\rho(x)=\theta(x/\rho)$. We consider 
\[\varphi_{n,\rho}(x)=
\psi_n(x)+\theta_\rho(x)(\psi_n(0)-\psi_n(x)).
\]
 It follows that $\varphi_{n,\rho}$ is smooth and   $\varphi_{n,\rho}(x)=\psi_n(0)$ for $|x|<\rho$. Since $0\leq \theta_\rho\leq 1$ it follows that the first property is satisfied. For the second property let us observe that the difference between $\varphi_{n,\rho}$ and $\varphi$ satisfies
 \[
 |\varphi_{n,\rho}(x)-\varphi(x)|\leq |\psi_n(x)-\varphi(x)|+|\theta_\rho(x)||\psi_n(x)-\psi_n(0)|.
 \]
 Let us choose a compact set $K$. For any $\eps>0$ we choose an $N_\eps$ such that for any $n\geq N_\eps$,  $|\psi_n(x)-\varphi(x)|\leq \eps $ for any $x\in K$. Since for each $n\geq n_\eps$ the function $\psi_n$ is continuous we can choose $\rho_n$ small enough such that $|\psi_n(x)-\psi_n(0)|\leq \eps$ for all $|x|<2\rho_n$.  Then $ |\varphi_{n,\rho}(x)-\varphi(x)|\leq 2\eps$ and setting $\varphi_n=\varphi_{n,\rho_n}$ we finish the proof.
\end{proof}

\subsection{ Sobolev's  inequality for the fractional Laplacian  in exterior domains.
\\}\label{sec:GNS}

In this section we will consider a   domain $\Omega$ 
which satisfies the following measure density condition \eqref{measure.condition}: there exists a positive constant $C_\Omega$ such that 
\begin{equation}\
\label{measure.condition.appendix}
  |\Omega\cap B_\rho(x)|\geq C_\Omega \rho^N, \quad\forall x\in \Omega,  \forall \rho>0.
\end{equation}
In \cite{MR3280034} it has been proved  that the above  condition satisfied for all $\rho \in (0,1)$ is   equivalent to the fact that $\Omega$ is a $W^{s,p}(\Omega)$-extension domain. In particular using \cite[Th. 2.4]{MR2944369} we have that $C_c^\infty(\rr^N)$ is dense in $H^s(\Omega)$.

\begin{lemma}
	Let $\Omega$ be a domain in $\rr^N$ that  satisfies \eqref{measure.condition.appendix}. For any $1<p<\infty$ there exists a positive constant $C(N,p,s,\Omega) $ such that
	\begin{equation}
\label{ineg.E}\notag
  \int_{\Omega\setminus E}\frac{dy}{|x-y|^{N+ps}}\geq C(N,p,s,\Omega) |E|^{-ps/N}
\end{equation}
holds for all $x\in \Omega$ and for all   nonempty measurable sets $E\subset \Omega$  with finite measure.
\end{lemma}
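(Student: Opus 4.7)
The plan is a standard level-set-type argument exploiting the measure density condition. The key idea is that if $E$ has finite measure, then a ball $B_\rho(x)$ of radius $\rho \sim |E|^{1/N}$ intersects $\Omega$ in a set large enough that, after removing $E$, a substantial portion (of measure $\gtrsim |E|$) remains on which the integrand $|x-y|^{-N-ps}$ is bounded below by $\rho^{-N-ps}$.

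Concretely, I would proceed as follows. First, set $\rho := (2|E|/C_\Omega)^{1/N}$, which is well-defined since $|E| > 0$ and finite. By the measure density condition \eqref{measure.condition.appendix} applied at $x \in \Omega$ with this $\rho$, one has
$$
|B_\rho(x) \cap \Omega| \;\geq\; C_\Omega \rho^N \;=\; 2|E|.
$$
Hence, setting $A := (B_\rho(x) \cap \Omega) \setminus E$, the elementary estimate $|A| \geq |B_\rho(x) \cap \Omega| - |E|$ yields $|A| \geq |E|$.

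Second, since $A \subset \Omega \setminus E$ and $|x-y| \leq \rho$ for every $y \in A$, we can restrict the integral to $A$ and bound the integrand pointwise from below:
$$
\int_{\Omega \setminus E} \frac{dy}{|x-y|^{N+ps}} \;\geq\; \int_{A} \frac{dy}{|x-y|^{N+ps}} \;\geq\; \frac{|A|}{\rho^{N+ps}} \;\geq\; \frac{|E|}{\rho^{N+ps}}.
$$
Substituting $\rho^{N+ps} = (2/C_\Omega)^{(N+ps)/N} |E|^{(N+ps)/N}$ gives
$$
\int_{\Omega \setminus E} \frac{dy}{|x-y|^{N+ps}} \;\geq\; \left(\frac{C_\Omega}{2}\right)^{(N+ps)/N} |E|^{\,1 - (N+ps)/N} \;=\; C(N,p,s,\Omega)\, |E|^{-ps/N},
$$
which is the claimed inequality.

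There is essentially no obstacle here: the argument is a direct two-line computation once one chooses $\rho$ correctly. The only subtle point is that the measure density condition is assumed for \emph{all} $\rho > 0$ (not only $\rho \in (0,1]$), which is needed because $|E|$ is arbitrary and so the natural radius $\rho = (2|E|/C_\Omega)^{1/N}$ can be arbitrarily large; under the weaker ``plump'' version of the condition one would need either a boundedness assumption on $|E|$ or an additional global argument.
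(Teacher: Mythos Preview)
Your argument is correct and in fact more direct than the paper's. You choose $\rho$ so that $|B_\rho(x)\cap\Omega|=2|E|$, restrict the integral to the portion of $\Omega\setminus E$ lying inside $B_\rho(x)$, and bound the integrand below by $\rho^{-N-ps}$ on that set. This uses the measure density condition exactly once.

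The paper takes a different route: it first chooses $\rho$ so that $|\Omega\cap B_\rho(x)|=|E|$ exactly (via continuity of $\rho\mapsto|\Omega\cap B_\rho(x)|$), then uses a swapping identity $|(\Omega\setminus E)\cap B_\rho(x)|=|E\cap(\Omega\setminus B_\rho(x))|$ to reduce the estimate to
\[
\int_{\Omega\setminus E}\frac{dy}{|x-y|^{N+ps}}\ \ge\ \int_{\Omega\setminus B_\rho(x)}\frac{dy}{|x-y|^{N+ps}},
\]
and finally bounds the right-hand side by a dyadic sum over annuli $\{\rho\alpha^k<|x-y|<\rho\alpha^{k+1}\}$, invoking the density condition again to control the measure of each annular intersection with $\Omega$. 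This follows the template of the $\Omega=\rr^N$ argument in \cite{MR3133422}. The intermediate inequality (domination by the integral over the exterior of a ball) is a slightly stronger statement than what the lemma asserts, but for the lemma itself your approach is equally valid, shorter, and uses the density hypothesis only once. Your closing remark about needing the density condition for \emph{all} $\rho>0$ (not just $\rho\le 1$) is well observed and applies equally to both proofs.
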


\begin{remark}When $\Omega$ is the whole space $\rr^N$ this has been proved in \cite[Lemma A.1]{MR3133422}.
	When $x\in {\rm Int}(\Omega\setminus E)$ the above integral is infinite. The estimate is then useful when $x\in E$. 
\end{remark}


\begin{proof}
	Let us fix $x\in \Omega$ and consider $\rho$ which we will fix later.  
	We have
		\begin{align*}
\label{}
  \int_{\Omega\setminus E}\frac{dy}{|x-y|^{N+ps}}&=  \int_{(\Omega\setminus E)\cap B_\rho(x)}\frac{dy}{|x-y|^{N+ps}}+  \int_{(\Omega\setminus E)\cap (\Omega\setminus B_\rho(x))}\frac{dy}{|x-y|^{N+ps}}\\
  &\geq  \int_{(\Omega\setminus E)\cap B_\rho(x)}\frac{dy}{\rho^{N+ps}}+ \int_{(\Omega\setminus E)\cap (\Omega\setminus B_\rho(x))}\frac{dy}{|x-y|^{N+ps}}\\
  &=\frac{1}{\rho^{N+ps}}|(\Omega\setminus E)\cap B_\rho(x)|+ \int_{(\Omega\setminus E)\cap (\Omega\setminus B_\rho(x))}\frac{dy}{|x-y|^{N+ps}}.
\end{align*}
Let $\rho=\rho(x,E, \Omega)$ be such that $|\Omega\cap B_\rho(x)|=|E|$. This is possible since $x\in \Omega$, $\Omega$ is an open unbounded set so the map $\rho\in (0,\infty)\mapsto |\Omega\cap B_\rho(x)|=\int_\Omega 1_{B_\rho(x)}(y)dy\in (0,\infty)$ is continuous by dominated convergence theorem  and satisfies $\lim _{\rho\downarrow 0}|\Omega\cap B_\rho(x)|=0$ and $\lim_{\rho\rightarrow\infty} |\Omega\cap B_\rho(x)|=|\Omega|=\infty$. Then
\begin{align*}
  |  (\Omega\setminus E)\cap B_\rho(x)|&=|\Omega \cap B_\rho(x)|-|E\cap B_\rho (x)|\\
  &=|E|-|E\cap B_\rho (x)|=|E\cap (\Omega\setminus B_\rho(x))\|.
\end{align*}
Hence
\begin{align*}
  \int_{\Omega\setminus E}\frac{dy}{|x-y|^{N+ps}}&
  \geq \int_{E\cap (\Omega\setminus B_\rho(x))} \frac{dy}{\rho^{N+ps}}+ \int_{(\Omega\setminus E)\cap (\Omega\setminus B_\rho(x))}\frac{dy}{|x-y|^{N+ps}}\\
  &\geq \int_{E\cap (\Omega\setminus B_\rho(x))} \frac{dy}{|x-y|^{N+ps}}+ \int_{(\Omega\setminus E)\cap (\Omega\setminus B_\rho(x))}\frac{dy}{|x-y|^{N+ps}}\\
&= \int_{ \Omega\setminus B_\rho(x)} \frac{dy}{|x-y|^{N+ps}}.
\end{align*}
We claim now that under the assumption  \eqref{measure.condition} there exists an $\alpha=\alpha(\Omega,N)>0$ such that 
\begin{equation}
\label{cond.alpha}
  |\Omega \cap \{y:r<|y-x|<\alpha r \}|\geq \frac{r^N}2,\quad \forall r>0.
\end{equation}
Indeed 
\begin{align*}
\label{}
  |\Omega \cap \{y:r<|y-x|<\alpha r \}|&=|\Omega\cap  B_{\alpha r}(x) |-|\Omega\cap  B_{ r}(x)|\geq C_\Omega (\alpha r)^N -|B_{ r}(x)|\\
  &=C_\Omega(\alpha r)^N-\omega_Nr^N=r^N(C_\Omega\alpha^N-\omega_N)\geq \frac{r^N}2
\end{align*}
where we choose  $\alpha>1$  large enough.

 Using \eqref{cond.alpha} we obtain 
 \begin{align*}
  \int_{ \Omega\setminus B_\rho(x)}\frac{dy}{|x-y|^{N+ps}}&\geq \sum_{k\geq 0}\int _{\Omega\cap \{ \rho \alpha^k\leq |x-y|\leq \rho\alpha^{k+1}  \} }\frac{dy}{|x-y|^{N+ps}}\\
  &\geq \sum_{k\geq 0} |\Omega\cap \{ \rho \alpha^k\leq |x-y|\leq \rho\alpha^{k+1}  \}| (\rho \alpha^{k+1})^{-(N+ps)}\\
  &\geq \frac12\sum_{k\geq 0} (\rho\alpha^k)^N(\rho \alpha^{k+1})^{-(N+ps)}\\
  &=\frac 12\rho^{-sp}\alpha^{-(N+ps)}\sum_{k\geq 0}\alpha^{-psk}=C(N,p,s,\Omega)|E|^{-sp/N},
\end{align*}
%
which finishes the proof.
\end{proof}

\begin{lemma}
	\label{GNS.ext.balls} Let $\Omega$ be a domain satisfying \eqref{measure.condition.appendix}, $s\in (0,1)$ and $2s<N$. Then for any $f\in H^s(\Omega)$ we have
	$$
		\label{gns}
	  \|f\|_{L^{2^*}(\Omega)}\leq C(\Omega,N,s) [f]_{s,\Omega}.
	$$
\end{lemma}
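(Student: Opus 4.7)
The plan is to follow the Bogdan--Dyda / Frank--Seiringer level-set approach, using the preceding lemma as the key geometric input. By the measure-density assumption $\Omega$ is a $W^{s,2}$-extension domain \cite{MR3280034}, so $C_c^\infty(\rr^N)$ is dense in $H^s(\Omega)$ \cite{MR2944369}; hence it suffices to prove the inequality for smooth compactly supported $f$. Replacing $f$ by $|f|$ (note $[|f|]_{s,\Omega}\leq [f]_{s,\Omega}$), I may assume $f\geq 0$.

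Let $E_k := \{x\in \Omega : f(x) > 2^k\}$ and $A_k := |E_k|$ for $k\in\zz$. For any $(x,y) \in E_{k+1}\times (\Omega\setminus E_k)$ one has $f(x)-f(y)\geq 2^k$, so restricting the Gagliardo double integral to this cross region and applying the preceding lemma with $p=2$ and $E=E_k$ yields
\[
[f]_{s,\Omega}^2 \;\geq\; 4^k \int_{E_{k+1}}\int_{\Omega\setminus E_k}\frac{dy\,dx}{|x-y|^{N+2s}} \;\geq\; C\,4^k\,A_{k+1}\,A_k^{-2s/N}, \qquad \forall k \in \zz.
\]
Setting $\theta := 2s/N$ (so that $2^* = 2/(1-\theta)$) and $B_k := A_k\,2^{k\cdot 2^*}$, the above recasts as a recursion $B_{k+1}\leq D\,B_k^{\theta}$ with $D = C'\,[f]_{s,\Omega}^2$. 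Since $f$ is bounded and compactly supported, $B_k \to 0$ as $|k|\to \infty$; evaluating the recursion at an index where $\sup_k B_k$ is (essentially) attained forces $\sup_k B_k \leq D^{1/(1-\theta)} = C''\,[f]_{s,\Omega}^{2^*}$. This gives the weak-type bound $\sup_{t>0} t^{2^*}\,|\{f>t\}| \lesssim [f]_{s,\Omega}^{2^*}$.

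The passage from the weak $L^{2^*}$ bound to the strong Sobolev inequality proceeds by Maz'ya's truncation method. I consider the dyadic truncations $f_m := \min((f-2^m)_+,2^m)$, which satisfy the pointwise bounds $|f_m(x)-f_m(y)|\leq |f(x)-f(y)|$ and $\sum_{m\in\zz}(f_m(x)-f_m(y))^2 \leq C(f(x)-f(y))^2$, hence $\sum_m [f_m]_{s,\Omega}^2 \leq C[f]_{s,\Omega}^2$. The weak estimate applied to $f_m$ at level $\tau = 2^{m-1}$ controls $A_{m+1}$ through the identity $\{f_m>2^{m-1}\} = \{f>3\cdot 2^{m-1}\} \supset E_{m+1}$; combined with the layer-cake estimate $\|f\|_{L^{2^*}(\Omega)}^{2^*}\lesssim \sum_m 2^{m\cdot 2^*}A_{m+1}$ and the embedding $\ell^2\hookrightarrow \ell^{2^*}$ (valid since $2^*\geq 2$), this closes the argument. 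The main obstacle is the bookkeeping in this final weak-to-strong step, where one must carefully exploit the near-orthogonality of the dyadic truncations in the Gagliardo seminorm; this is, however, a standard device in the fractional Sobolev literature.
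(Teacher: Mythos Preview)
Your proof is correct and follows essentially the same strategy as the paper: both reduce to bounded compactly supported functions by density (using the same references \cite{MR3280034,MR2944369}), introduce the dyadic level sets $A_k=\{|f|>2^k\}$, and feed the preceding geometric lemma into a level-set argument. The only difference is cosmetic: the paper simply invokes the direct summation argument from the Appendix of \cite{MR2873236} (Savin--Valdinoci), whereas you spell out the Frank--Seiringer variant that first obtains the weak-$L^{2^*}$ bound via the recursion $B_{k+1}\leq D\,B_k^{\theta}$ and then upgrades to the strong inequality by Maz'ya truncation; both closings are standard and rest on the same key estimate.
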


\begin{remark}
	When $s=1$   Sobolev's inequality in  locally Lipschitz exterior domains have been considered in \cite[Lemma~3.1]{MR2109950} and \cite[Th.~II.6.1, p.~88]{MR2808162}. 
	A proof in the whole space $\rr^N$ without using Fourier analysis tools has been done in \cite[Theorem~7.1]{MR2873236}.
\end{remark}

\begin{proof}Let us denote $A_k=\{x\in \Omega: |f|>2^k\}$ for any integer $k$. 
Repeating the  arguments in the Appendix of \cite{MR2873236} we prove the estimate for functions $f:\rr^N\rightarrow\rr$ bounded measurable and compactly supported in $\rr^N$. 
Since $\Omega$ satisfies hypothesis \eqref{measure.condition}  
 we have that $C_c^\infty(\rr^N)$ is dense in  $H^s(\Omega)$ \cite{MR2944369} and by density 
we obtain the desired inequality.
\end{proof}

%
%

\subsection{Exterior $L^p$ Nash inequality for the fractional Laplacian.
\\}
\label{sec:Nash}
%
%
%
%
%
%
%
%

In view of possible applications of our results to nonlinear problems we give here a $L^p$ version of the Nash inequality proved in Lemma \ref{lemma.nash.2}. This will be useful in the context of nonlinear problems.

\begin{theorem} Let  $N\geq 1$,  $\Omega\in \rr^N$ satisfying hypothesis \eqref{measure.condition.appendix}, $r\in (0,1)$ and  $p\in (1,\infty)$. For any $f\in L^1(\Omega)$ such that $|f|^{p/2}\in H^{r}(\Omega)$ the following holds
\begin{equation}
\label{nash}\notag
  \|f\|_{L^p(\Omega)}^{\frac{p(N(p-1)+2r)}{N(p-1)}}\leq C(p,r,N) \|f\|_{L^1(\Omega)}^{\frac{2rp}{N(p-1)}}[|f|^{p/2}]_{r,\Omega}^{2}.
\end{equation}
\end{theorem}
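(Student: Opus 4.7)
The plan is to reduce the $L^p$ Nash inequality to the (homogeneous) exterior Sobolev inequality of Lemma~\ref{GNS.ext.balls} applied to the substitution $g:=|f|^{p/2}$, combined with a single Hölder interpolation between $L^1$ and the critical Lebesgue space.

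Assume first $2r<N$. Since by hypothesis $g=|f|^{p/2}\in H^{r}(\Omega)$, Lemma~\ref{GNS.ext.balls} gives
$$
\|g\|_{L^{2^*}(\Omega)}\le C\,[g]_{r,\Omega},\qquad 2^*=\frac{2N}{N-2r},
$$
which, upon unwinding the substitution, reads
$$
\|f\|_{L^{q}(\Omega)}^{p/2}\le C\,[|f|^{p/2}]_{r,\Omega},\qquad q:=\frac{pN}{N-2r}.
$$
Next I would interpolate $L^{p}$ between $L^{1}$ and $L^{q}$: since $1<p<q$, Hölder gives $\|f\|_{L^{p}}\le \|f\|_{L^{1}}^{1-\theta}\|f\|_{L^{q}}^{\theta}$ with $\theta$ determined by $\tfrac{1}{p}=(1-\theta)+\tfrac{\theta}{q}$. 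A short computation using $q-1=\tfrac{N(p-1)+2r}{N-2r}$ yields
$$
\theta=\frac{N(p-1)}{N(p-1)+2r},\qquad 1-\theta=\frac{2r}{N(p-1)+2r}.
$$
Plugging the Sobolev bound into $\|f\|_{L^{q}}^{\theta}$ and raising both sides to the power $p(N(p-1)+2r)/(N(p-1))$, the exponents of $\|f\|_{L^{1}}$ and $[|f|^{p/2}]_{r,\Omega}$ collapse to $\tfrac{2rp}{N(p-1)}$ and $2$ respectively, which is exactly the announced inequality.

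For $N\ge 2$ the condition $2r<N$ is automatic, so the argument above covers the generic regime. The remaining case $N=1$ with $2r\ge 1$ I would treat as in Lemma~\ref{lemma.nash.2}: write $\Omega=(-\infty,a)\cup(b,\infty)$, split $f=f_{-}+f_{+}$, reduce to a half-line by translation, and use the even extension (cf.~\cite[Lemma~5.2]{MR2944369}) to obtain $g_{even}=|f_{+}|^{p/2}_{even}\in H^{r}(\rr)$ with $[g_{even}]_{r,\rr}\le 2[g]_{r,\rr_{+}}$. The inequality on the whole line then follows from the $L^{p}$ Nash inequality for the fractional Laplacian on $\rr$, which can be deduced from \cite[Theorem~1.3]{MR2299447} applied to $|f|^{p/2}$.

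The main obstacle is not conceptual but strictly algebraic: one must track the two nested exponent manipulations (the $p/2$ coming from the substitution $g=|f|^{p/2}$ and the $\theta,1-\theta$ coming from Hölder) and verify they combine into the powers $\tfrac{p(N(p-1)+2r)}{N(p-1)}$, $\tfrac{2rp}{N(p-1)}$ and $2$ of the statement. The secondary difficulty will be handling the one-dimensional high-regularity range $r\in[1/2,1)$ uniformly with the rest; there the Sobolev embedding is lost and the even-reflection trick combined with known Nash-type inequalities on $\rr$ seems the cleanest route.
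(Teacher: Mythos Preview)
Your treatment of the case $2r<N$ is exactly the paper's argument: Sobolev on $g=|f|^{p/2}$ followed by the H\"older interpolation between $L^1$ and $L^{pN/(N-2r)}$, with $\theta=\tfrac{N(p-1)}{N(p-1)+2r}$. Nothing to add there.

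The one-dimensional endpoint $r\in[1/2,1)$, however, has a real gap. Applying the $L^2$ Nash inequality (the one coming from \cite[Theorem~1.3]{MR2299447}) to $g=|f|^{p/2}$ gives
\[
\|g\|_{L^2(\rr)}^{2+4r}\lesssim \|g\|_{L^1(\rr)}^{4r}[g]_{r,\rr}^{2},
\]
which unwinds to
\[
\|f\|_{L^p(\rr)}^{p(1+2r)}\lesssim \|f\|_{L^{p/2}(\rr)}^{2rp}\,[|f|^{p/2}]_{r,\rr}^{2}.
\]
This is \emph{not} the claimed inequality: on the right you get $\|f\|_{L^{p/2}}$ instead of $\|f\|_{L^{1}}$, and the left-hand exponent is $p(1+2r)$ rather than $p(p-1+2r)/(p-1)$. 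The substitution $g=|f|^{p/2}$ turns the $L^1$ norm of $g$ into the $L^{p/2}$ norm of $f$, so the reduction to the scalar Nash inequality simply produces the wrong object. (There is also a minor issue: the theorem is stated for any $\Omega$ satisfying the measure density condition, not only complements of intervals, so the splitting $\Omega=(-\infty,a)\cup(b,\infty)$ is an extra, unverified assumption.)

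The paper closes this case differently and in a way that stays inside general $\Omega$: since $r/2<1=N$, one applies the already-proven case with exponent $r/2$,
\[
\|f\|_{L^p(\Omega)}^{\frac{p(p-1+r)}{p-1}}\lesssim \|f\|_{L^1(\Omega)}^{\frac{rp}{p-1}}[|f|^{p/2}]_{r/2,\Omega}^{2},
\]
and then upgrades the seminorm through the elementary interpolation
\[
[v]_{r/2,\Omega}^{2}\le C(r)\,\|v\|_{L^2(\Omega)}\,[v]_{r,\Omega},
\]
proved by splitting the double integral at scale $|x-y|=\delta$ and optimizing in $\delta$. With $v=|f|^{p/2}$ this introduces an extra $\|f\|_{L^p}^{p/2}$ on the right which, after absorption into the left, yields exactly the stated exponents. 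This trick avoids both the even-extension reduction and any appeal to external $L^p$ Nash results on $\rr$.
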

\begin{remark}
The above condition $|f|^{p/2}\in H^{r}(\Omega)$ holds for example if $f\in L^p(\rr^N)$ with $\ml f\in L^p(\rr^N)$.  Indeed
\begin{align*}
  \int_{\Omega}\int_{\Omega}	&	\frac{||f|^{p/2}(y)-|f|^{p/2}(x)|^2}{|x-y|^{N+2r}}dxdy\lesssim \int_{\Omega}	\int_{\Omega}	
	\frac{(f(y)-f(x))(|f|^{p-2}f(y)-|f|^{p-2}f(x))}{|x-y|^{N+2r}}dxdy\\
	&\leq \mathcal{E}(f,|f|^{p-2}f)=(\mathcal L f, |f|^{p-2}f).
\end{align*}
Since $f\in L^p(\rr^N)$ it follows that  $|f|^{p-2}f$ to $L^{p'}(\Omega)$. It follows that the term $(\mathcal L f, |f|^{p-2}f)$ is finite so  $[|f|^{p/2}]_{r,\Omega}$ is well defined. 

\end{remark}

\begin{proof}
We distinguish two cases. The first one concerns the case when $2r<N$ and the inequality of Lemma~\ref{GNS.ext.balls} holds. The second one treats the remaining case $N=1$. 

Under the assumption $2r<N$ we use the inequality of Lemma~\ref{GNS.ext.balls} to obtain 
\[
[|f|^{p/2}]_{r,\Omega}\geq \| |f|^{p/2} \|_{2^*,\Omega}= \| f\|_{\frac{pN}{N-2r}}^{p/2}.
\]
Using interpolation we have
\[
\|f\|_{L^p(\Omega)}\leq \|f\|_{L^{\frac{pN}{N-2r}}(\Omega)}^\theta \|f\|_{L^1(\Omega)}^{1-\theta},
\]
where
\[ \frac 1p=\frac{\theta}{{\frac{pN}{N-2r}}}+\frac {1-\theta}1, \ i.e. \ \theta=\frac{N(p-1)}{N(p-1)+2r}.
\]
Putting together the last two inequalities we get 
\[
[|f|^{p/2}]_{r,B^c}^2\geq \Big( \frac{\|f\|_p}{\|f\|_1^{1-\theta}} \Big)^{\frac p\theta},
\]
and replacing the value of $\theta$ we obtain the desired inequality. 

It remains to consider the case when $N=1$. We use a trick that has been used previously in \cite{MR3809118}, namely we use the fact that $r/2<N=1$ and apply the previous inequality with $r/2$ instead of $r$:
  \[\|f\|_{L^p(\Omega)}^{\frac{p(N(p-1)+r)}{N(p-1)}}\leq C(p,r,N) \|f\|_{L^1(\Omega)}^{\frac{rp}{N(p-1)}}[|f|^{p/2}]_{r/2,\Omega}^{2}.
\]
We claim that for any open set $\Omega\subset \rr^N$
\begin{equation}
\label{claim.1}
  \|v\|_{r/2,\Omega}\leq  C(r) \|v\|_{L^2(\Omega)}^{1/2}[v]^{1/2}_{r,\Omega}.
\end{equation}
If we use this with $v=|f|^{p/2}$ we get 
\begin{align*}
\label{}
  \|f\|_{L^p(\Omega)}^{\frac{p(N(p-1)+r)}{N(p-1)}}\lesssim \|f\|_{L^1(\Omega)}^{\frac{rp}{N(p-1)}}[|f|^{p/2}]_{r/2,\Omega}^{2}\lesssim  \|f\|_{L^1(\Omega)}^{\frac{rp}{N(p-1)}} \|f\|_{L^p(\Omega)}^{p/2}
  [|f|^{p/2}]_{r,\Omega}
\end{align*}
which after simplifying the $L^p$-norm from the right hand side is exactly our desired inequality. 

It remains to prove claim \eqref{claim.1}. When $\Omega=\rr^N$ it is easily obtained using the Fourier transform. When $\Omega$ is an arbitrary open set of $\rr^N$ we proceed as follow:
\begin{align*}
\label{}
  [v]_{r/2,\Omega}^{2}&=\int_\Omega\int_{\Omega}
  \frac{|v(x)-v(y)|^2}{|x-y|^{N+r}}dxdy\\
  &=\iint_{x,y\in \Omega, |x-y|<\delta}
  \frac{|v(x)-v(y)|^2}{|x-y|^{N+r}}dxdy+
  \iint_{x,y\in \Omega, |x-y|>\delta}
  \frac{|v(x)-v(y)|^2}{|x-y|^{N+r}}dxdy\\
  &\leq \delta^s\iint_{x,y\in \Omega, |x-y|<\delta}
  \frac{|v(x)-v(y)|^2}{|x-y|^{N+2r}}dxdy
  + \iint_{x,y\in \Omega, |x-y|>\delta}
  \frac{|v(x)|^2+|v(y)|^2}{|x-y|^{N+r}}dxdy\\
  &\leq \delta^r[v]_{r,\Omega}^2 +2\int_\Omega v^2(x) \int_{|x-y|>\delta} \frac {dy}{|x-y|^{N+r}}dx\\
  &\leq \delta^r[v]_{r,\Omega}^2 + C(r)\delta ^{-r}\|v\|_{L^2(\Omega)}^2.
\end{align*}
Choosing $\delta^r=\|v\|_{L^2(\Omega)}/[v]_{r,\Omega}$   the claim is proved.  
\end{proof}

\bibliographystyle{plain}
\bibliography{biblio}
\end{document}